\newcommand{\Z}{\mathbb{Z}}
\def \P{\mathbb P}
\def \E{\mathbb{E}}
\def \R{\mathbb R}
\def \L{\mathcal L}
\DeclareMathOperator{\var}{Var}
\newcommand{\zero}{{\mathbf 0}}
\newcommand{\extB}{\partial_{\circ}}
\newtheorem{thm}{Theorem}[section]
\newtheorem{prop}[thm]{Proposition}
\newtheorem{cor}[thm]{Corollary}
\newtheorem{lemma}[thm]{Lemma}
\newtheorem{remark}[thm]{Remark}
\newcommand{\eqd}{\,{\buildrel d \over =}\,}
\newcommand{\Ze}{\Z_{\mathrm{even}}}
\newcommand{\Zo}{\Z_{\mathrm{odd}}}
\newcommand{\Cf}{\mathcal{C}_{\mathrm{fin}}}
\newcommand{\eps}{\varepsilon}
\title{Delocalization of uniform graph homomorphisms from $\Z^2$ to $\Z$}
\author{Nishant Chandgotia, Ron Peled, Scott Sheffield \and Martin Tassy}
\address{Centre for Applicable Mathematics, Tata Institute of Fundamental Research}
\email{nishant.chandgotia@gmail.com}
\address{School of Mathematical Sciences, Tel Aviv University}
\email{peledron@post.tau.ac.il}
\address{Department of Mathematics, Massachusetts Institute of Technology}
\email{sheffield@math.mit.edu}
\address{Department of Mathematics, Dartmouth College}
\email{mtassy@math.dartmouth.edu}
\begin{document}
\begin{abstract}
Graph homomorphisms from the $\Z^d$ lattice to $\Z$ are functions on $\Z^d$ whose gradients equal one in absolute value. These functions are the height functions corresponding to proper $3$-colorings of $\Z^d$ and, in two dimensions, corresponding to the 6-vertex model (square ice). We consider the uniform model, obtained by sampling uniformly such a graph homomorphism subject to boundary conditions. Our main result is that the model delocalizes in two dimensions, having no translation-invariant Gibbs measures. Additional results are obtained in higher dimensions and include the fact that every Gibbs measure which is ergodic under even translations is extremal and that these Gibbs measures are stochastically ordered.
\end{abstract}
	\maketitle

    \section{Introduction}
    In this paper we study \emph{homomorphism height functions} on the $\Z^d$ lattice: functions $f:\Z^d\to\Z$ restricted to satisfy $|f(u) - f(v)| = 1$ when $u$ is adjacent to $v$ and taking even values on the even sublattice. Such functions have received special attention in the literature. In one dimension they are the possible trajectories of simple random walk. In two dimensions they are the height functions of the $6$-vertex model (square ice), while in all dimensions they are in bijection with proper $3$-colorings of the lattice by taking the modulo~$3$ operation (after fixing the value at one vertex). There is no single name associated to these height functions in the literature and they have at times been called random graph homomorphisms into $\Z$~\cite{benjamini2000random} (or $\Z$-homomorphisms~\cite{peled2013lipschitz}), $\Z^d$-indexed random walk~\cite{MR1856513}, Body-Centered Solid-On-Solid (BCSOS)~\cite{van1977exactly} or the height functions of the 6-vertex model~\cite{van1977exactly}.

    Our main concern is with the fluctuations exhibited by a homomorphism height function which is uniformly sampled in a domain with zero boundary conditions. The one-dimensional function is a random walk bridge, which fluctuates as the square-root of the length of the domain. In high dimensions the surface localizes, having bounded variance at each vertex, and this is conjectured to occur for all $d\ge 3$~\cite{MR1856513, galvin2003homomorphisms, galvin2015phase, peled2017high}. A lower bound on the typical range of the surface is proved in~\cite{benjamini2007random}. The two-dimensional case is especially interesting, as integer-valued height functions in two dimensions generally undergo a \emph{roughening transition} from a rough (delocalized) phase to a smooth (localized) phase as the parameters of the model are changed~\cite{velenik2006localization}.

     The uniform model on homomorphism height functions has no parameters and hence its behavior is {\em a priori} unclear in two dimensions. However, noting that the model is a special case of the $6$-vertex model (when the $6$ weights are equal) suggests that it delocalizes, as it falls in the disordered regime of the predicted phase diagram~\cite{baxter2007exactly}. Indeed, it is particularly natural to consider the \emph{F-model}, a one-parameter subfamily of the $6$-vertex model, indexed by $c>0$, whose description in terms of height functions is the following: A homomorphism height function $f$ in a domain (with boundary conditions) is sampled with probability proportional to $c^{-N(f)}$ with $N(f)$ being the number of diagonally adjacent vertices $u,v$ on which $f(u)\neq f(v)$. The uniform model that we study here is then obtained for $c=1$, whereas it is predicted that the height function of the F-model delocalizes for all $c\le 2$ and localizes when $c>2$. It has recently been shown that this prediction is correct for $c \geq 2$ ~\cite{duminil2016discontinuity, glazmanpeled2018} and our current paper shows that it is correct for $c = 1$.  Even more recently (following the posting of this article) the prediction has been extended to the range $(1,2)$  \cite{duminil-copin_delocalization_2020}. We further remark that delocalization at $c=1$ is suggested by  rapid mixing results which have been proved for the uniform proper $3$-coloring model~\cite{MR1857394,MR2068870}. In this work we prove that the two-dimensional uniform homomorphism height function delocalizes. Additional results are obtained in higher dimensions.

     \begin{figure}
	\begin{center}
		\includegraphics[scale=0.48]{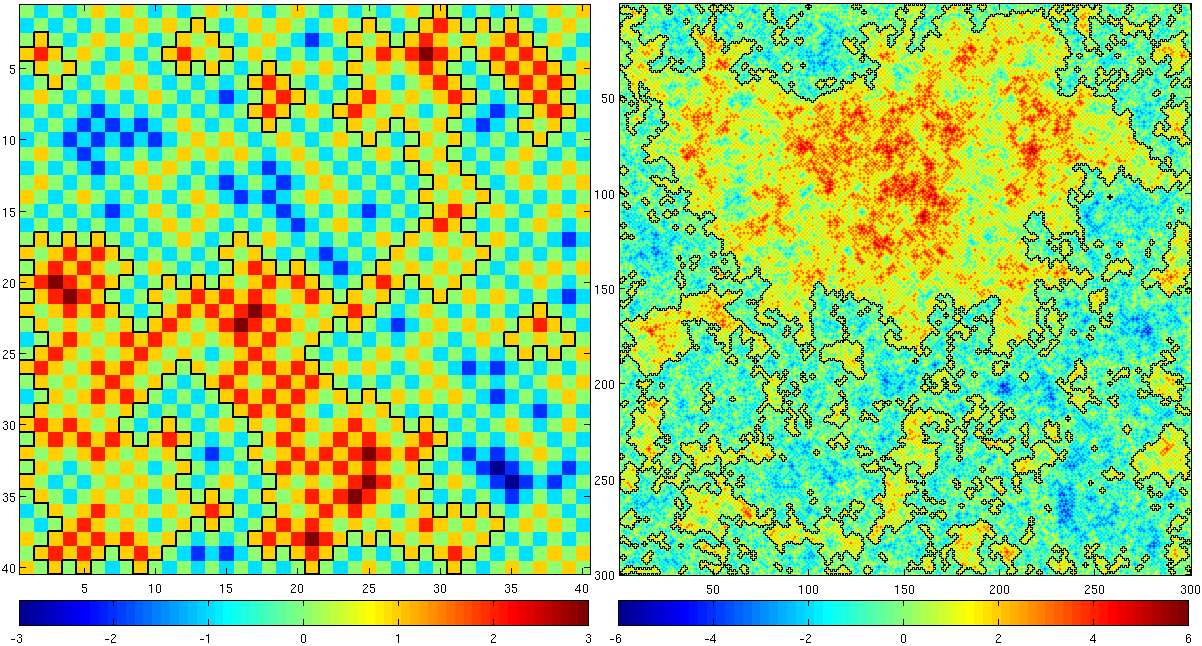}
	\end{center}
	\caption{the outermost level sets separating zeros and ones of a uniform homomorphism height function with zero boundary values (on every second vertex) on a $40\times40$ and $300\times300$ squares, sampled using coupling from the past~\cite{propp1996exact}. Theorem~\ref{thm:main} shows that the height at the center of the squares diverges as the square size increases.}
	\label{fig:hom_with_level_line}
\end{figure}
\begin{figure}
	\begin{center}
		\includegraphics[scale=0.63]{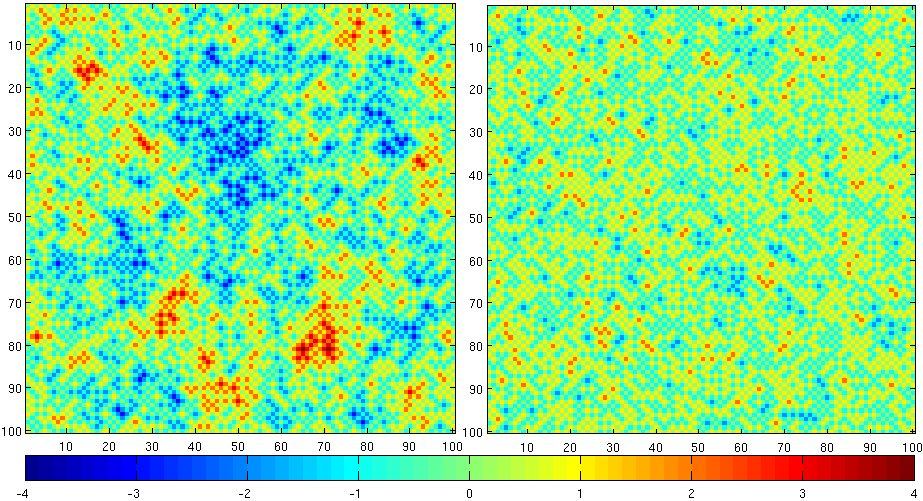}
	\end{center}
	\caption{Simulation of uniform homomorphism height functions with zero boundary conditions (on every second vertex) on a $100\times 100$ square (left) and on the middle slice of a $100\times100\times100$ cube. Sampled using coupling from the past~\cite{propp1996exact}. Unlike the two-dimensional case, it is conjectured that homomorphism height functions in three and higher dimensions are localized. This is known in sufficiently high dimensions~\cite{galvin2015phase, peled2017high}.}
	\label{fig:hom_2d_3d}
\end{figure}

      We note that the exact entropy constant for the two-dimensional uniform model has been found by Lieb~\cite{lieb1967residual}, and the result has been extended to the $6$-vertex parameter space; see~\cite{duminil2016discontinuity,duminil2018bethe, lieb1972two} for related works and surveys. However, these results do not seem to suffice to describe the fluctuations of the height function.

    It is worth mentioning that there are only a handful of results in the mathematical literature where an integer-valued height function has been shown to delocalize in two dimensions. The breakthrough work of Fr\"ohlich--Spencer \cite{MR634447} established this fact for the high-temperature integer-valued Gaussian free field and Solid-On-Solid models, en route to proving the existence of the Kosterlitz-Thouless transition in the plane-rotator (XY) model (see also \cite{kharash2017fr}). {Some } other examples concern integrable models either in the sense that the free energy is exactly calculable (as done by Lieb~\cite{lieb1967residual} for the model studied here) or having an exactly predicted phase diagram, though, as mentioned above, these facts by themselves do not yield a description of the fluctuations. Here is a list of such examples known {(at the time when this paper was written)}.
 \begin{itemize}
\item
The dimer model~\cite{MR1872739, MR1933589} and the uniform spanning tree (for certain domains in $\Z^2$~\cite{MR3781449, russkikh2018dominos}, the honeycomb lattice~\cite{MR2415464} and for planar graphs with some natural restrictions~\cite{berestycki2016universality}).
\item
The $F$-model at the parameter $c=2$~\cite{MR3592746, glazmanpeled2018}, and around the free fermion point $c=\sqrt 2$~\cite{dubedat2011exact,MR3637384}.
\item
The Lipschitz height function model on the triangular lattice (the loop $O(2)$ model) at the parameter $x=1/\sqrt{2}$~\cite{duminil2017macroscopic} and $x=1$~\cite{glazmanManolescu2018}.
\end{itemize}
Since the posting of our paper, delocalisation has been proved for several other models: \cite{lammers_height_2020} (for certain integer-valued height functions on graphs with degree $3$ or less), \cite{lammers2021delocalisation}(for the solid-on-solid model on planar graphs with arbitrary slope) and \cite{duminil-copin_delocalization_2020} (for the six-vertex model when $a=b=1$ and $1< c\leq 2$). Very recently Peled and Glazman provided alternative arguments for the delocalisation of the homomorphism height functions using triangular lattice contours \cite{glazmanpeled2018}.

    As a final perspective on our results, we note that the set of homomorphism height functions form a dynamical system under translations by the even sublattice and has the structure of a countable-state shift of finite type. Our results imply that this dynamical system in two dimensions does not have a measure of maximal entropy. A characterisation
of one-dimensional countable-state shifts of finite type
(also known as topological Markov shifts) which have a measure of maximal entropy can be found in~\cite{MR1955261}; however such a characterisation is not known in higher dimensions.

    \subsection{Notation}
    \subsubsection{$\Z^d$ Lattice} We work throughout on the integer lattice $\Z^d$, $d\ge 1$. Denote $\zero:=(0,0,\ldots, 0)\in\Z^d$. For $v = (v_1, v_2,\ldots, v_d)\in\Z^d$ let $\|v\|_1 := |v_1| + \cdots + |v_d|$. Vertices $v,w\in\Z^d$ are adjacent, denoted $v\sim w$, if $\|v-w\|_1=1$. We give names to the two sublattices in $\Z^d$ defined by the parity of the sum of coordinates,
    \begin{align*}
      \Ze^d &:= \{v\in\Z^d\colon \|v\|_1\text{ is even}\},\\
      \Zo^d &:= \{v\in\Z^d\colon \|v\|_1\text{ is odd}\}.
    \end{align*}
    The (graph) ball of radius $L$ is
    \begin{equation*}
      \Lambda(L):=\{v\in\Z^d\colon \|v\|_1 \le L\},\quad \text{$L\ge 0$ integer}.
    \end{equation*}
    The external vertex boundary of a set $\Lambda\subset\Z^d$ is denoted
    \begin{equation*}
      \extB\Lambda:=\{v\in\Z^d\colon v\notin \Lambda, \exists w\sim v, w\in\Lambda\}
    \end{equation*}
    and the $1$-extension of the set is given by
    \begin{equation*}
      \Lambda^+ := \Lambda\cup\extB\Lambda.
    \end{equation*}

    \subsubsection{Homomorphism height functions}
    An integer-valued function $f$ defined on a subset of $\Z^d$ is called a \emph{homomorphism height function} (or $\Z$-homomorphism) if it satisfies
    \begin{alignat}{2}
      &\text{Lipschitz condition: }\quad&& |f(u) - f(v)|\le \|u-v\|_1,\label{eq:Lipschitz_constraint}\\
      &\text{Parity condition: }&&f(v)\equiv\|v\|_1\pmod 2 \label{eq:parity_constraint}
    \end{alignat}
    for $u,v$ in the domain of $f$. Given \eqref{eq:parity_constraint}, on connected domains condition~\eqref{eq:Lipschitz_constraint} is equivalent to
    \begin{equation}\label{eq:gradients_are_one}
      |f(u) - f(v)| = 1\quad \text{for all adjacent $u,v$ in the domain of $f$.}
    \end{equation}
    The name homomorphism height function is motivated by the fact that a function satisfying~\eqref{eq:gradients_are_one} is a graph homomorphism from its domain to $\Z$. One may check that a homomorphism height function on any domain may be extended to a homomorphism height function on the whole of $\Z^d$, see, e.g.,~\cite[Kirszbraun property]{chandgotia2017kirszbraun} or \cite{MT2017}.

    The restriction of a function $f$ to a subset $\Lambda$ of its domain is denoted $f_{\Lambda}$.

    \subsubsection{Finite-volume Gibbs measures}
    Given a non-empty, finite $\Lambda\subset\Z^d$ and homomorphism height function $\tau:\extB\Lambda\to\Z$, the probability measure $\mu^{\Lambda, \tau}$, termed the Gibbs measure in $\Lambda$ with boundary condition $\tau$, is the uniform measure on homomorphism height functions $f:\Lambda^+\to\Z$ which satisfy that $f \equiv \tau$ on $\extB\Lambda$.

    We write $\mu^{\Lambda,0}$ to indicate that $\tau$ is the identically zero function.

    \subsubsection{Infinite-volume Gibbs measures}
    A probability measure $\mu$ on homomorphism height functions on $\Z^d$ is a \emph{Gibbs measure} if it satisfies the following condition: Sampling $f$ from $\mu$, for any non-empty, finite $\Lambda\subset\Z^d$ the conditional distribution of the restriction $f_{\Lambda^+}$, conditioned on $f_{\Lambda^c}$, equals $\mu^{\Lambda,\tau}$ with $\tau := f_{\extB\Lambda}$, almost surely. One checks easily that if the condition in the definition holds for a set $\Lambda$ then it holds with $\Lambda$ replaced by a subset of it. Hence the definition will be unaltered if we require both $\Lambda$ and $\Lambda^c$ to be connected subsets.

    A sequence of finite-volume Gibbs measures $\mu^{\Lambda_n,\tau_n}$, with $\Lambda_n$ increasing to $\Z^d$, is said to \emph{converge in distribution} (i.e., in the thermodynamic limit) if their marginal distribution on any finite subset of $\Z^d$ (defined for large $n$) converges in distribution. It is standard that the limiting measure is then a Gibbs measure \cite[Theorem 14.15]{MR2807681}.

    A \emph{sublattice} $\L$ of $\Z^d$ is the set of linear combinations with integer coefficients of a collection of linearly independent (over the rationals) vectors $v_1, \ldots, v_k\in\Z^d$. The sublattice has \emph{full rank} if $k=d$.

    As we have defined homomorphism height functions to take even values on $\Ze^d$, it is not possible for a Gibbs measure to be invariant in distribution to all translations of $\Z^d$. It is thus useful to write that a Gibbs measure $\mu$ is \emph{$\L$-translation-invariant}, for a sublattice $\L\subset\Z^d$, if it is invariant in distribution to all translations in $\L$. Automatically, we have that $\L\subset\Ze^d$ and we typically take $\L = \Ze^d$. Correspondingly, $\mu$ is called \emph{$\L$-ergodic} if it is $\L$-translation-invariant and every event $A$ which is invariant to translations in $\L$ satisfies $\mu(A)\in\{0,1\}$.

    A Gibbs measure $\mu$ is called \emph{extremal} if it satisfies that any event $A$ in the tail-sigma-algebra has $\mu(A)\in\{0,1\}$. Here, the event $A$ is in the tail-sigma-algebra if its occurrence for a homomorphism height function $f$ may be determined solely from the restriction $f_{\Lambda^c}$, for any finite subset $\Lambda$. It is standard that an $\L$-translation-invariant extremal Gibbs measure is also $\L$-ergodic (\cite[Theorem 14.15]{MR2807681}).

    \subsection{Results}
    Our first result is a simple dichotomy holding in every dimension between delocalization and localization of homomorphism height functions with zero boundary conditions.
    \begin{thm}\label{thm:simple_dichotomy}
      In each dimension $d\ge 1$ exactly one of the following alternatives holds:
      \begin{enumerate}
        \item[(i)] For odd integers $L\ge 1$, let $f_L$ be sampled from $\mu^{\Lambda(L),0}$. Then the set of random variables $(f_L(\zero))$ is not tight. Precisely,
        \begin{equation}\label{eq:non-tightness}
            \lim_{M\to\infty}\sup_{L} \P(|f_L(\zero)|>M)>0.
        \end{equation}
        \item[(ii)] The sequence of finite-volume Gibbs measures $(\mu^{\Lambda(L),0})$, $L\ge 1$ odd, converges in distribution to a $\Ze^d$-translation-invariant Gibbs measure.
      \end{enumerate}
    \end{thm}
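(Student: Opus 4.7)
The direction (ii)$\Rightarrow\neg$(i) is automatic, so I derive (ii) from the failure of \eqref{eq:non-tightness}.

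\emph{Step 1 (tightness and subsequential limits).} Tightness of the family $(f_L(\zero))_L$, combined with the bound $|f_L(v) - f_L(\zero)| \le \|v\|_1$ from \eqref{eq:Lipschitz_constraint}, implies tightness of every finite-dimensional marginal of $\mu^{\Lambda(L), 0}$. A Prokhorov/diagonal argument then yields, along any subsequence of odd $L$'s, a further subsequence $(L_n)$ on which $\mu^{\Lambda(L_n), 0}$ converges in the product topology to some probability measure $\mu$. Both the Lipschitz and parity constraints are closed in this topology, so $\mu$ is supported on homomorphism height functions; and passing the DLR relations to the limit (for any fixed finite $\Lambda$ and all $n$ large enough that $\Lambda \subset \Lambda(L_n)$) shows $\mu$ is a Gibbs measure.

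\emph{Step 2 (uniqueness and $\Ze^d$-translation invariance).} For (ii) I need convergence of the full sequence together with $\Ze^d$-translation invariance of the limit. Both statements reduce to the following assertion: for every $v \in \Ze^d$ and finite $W \subset \Z^d$, the marginals on $W$ of $\mu^{\Lambda(L), 0}$ and of $\mu^{v + \Lambda(L), 0}$ converge to the same limit as $L \to \infty$. Comparing different $L$'s along two candidate subsequences (with $v = 0$) gives uniqueness of the subsequential limit, while varying $v \in \Ze^d$ yields invariance of the common limit under translation by $v$. My plan is a sandwich argument based on the standard height-monotone coupling for this model: for boundary conditions $\tau_1 \le \tau_2$ on $\extB\Lambda$ with matching parities, $\mu^{\Lambda, \tau_1}$ and $\mu^{\Lambda, \tau_2}$ admit a coupling with $f_1 \le f_2$ pointwise almost surely. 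Embedding both $\Lambda(L)$ and $v + \Lambda(L)$ inside a common larger ball $\Lambda(L + c)$ with $c \ge \|v\|_1$, the tightness from Step~1 keeps the boundary values in the intermediate annulus bounded in distribution, and the monotone coupling converts this into an $o(1)$ discrepancy between the marginals inside $W$.

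\emph{Main obstacle.} The technical crux is Step 2. The height-monotone coupling itself is a known property of the uniform homomorphism model (for instance via a lattice-condition verification that respects the parity structure), but using it to compare zero-boundary measures on two distinct domains requires a careful two-stage coupling: first re-sample the boundary on an intermediate annulus, where Step~1 guarantees that realistic boundary values remain bounded, then couple monotonically inside. Accumulating the resulting errors and showing that they vanish uniformly in the choice of $v \in \Ze^d$ and finite $W \subset \Z^d$ is the main point one must implement with care.
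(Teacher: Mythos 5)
Your Step~1 is fine: tightness of $(f_L(\zero))$ plus the Lipschitz bound yields tightness of all finite marginals, and a diagonal/Prokhorov argument gives subsequential Gibbs-measure limits.

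The gap is in Step~2. The height-monotone coupling you invoke holds for \emph{ordered} boundary conditions $\tau_1 \leq \tau_2$ on a common domain. But the boundary data on the intermediate annulus induced by $\mu^{\Lambda_1,0}$ and by $\mu^{\Lambda_2,0}$ are not ordered, only bounded in distribution. To run a sandwich you would enclose each between the extremal ordered data, roughly $\tau^\pm_M \approx \pm M$ on $\extB\Lambda''$, and compare $\mu^{\Lambda'',\tau^-_M}$ with $\mu^{\Lambda'',\tau^+_M}$ inside $W$. But those two measures do \emph{not} converge together as $\Lambda''\to\Z^d$: precisely in the setting of alternative~(ii) (e.g.\ high dimensions, where the surface is localized around its boundary condition), $\mu^{\Lambda'',\tau^+_M}$ and $\mu^{\Lambda'',\tau^-_M}$ stay roughly $2M$ apart on $W$. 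Thus the sandwich does not close. Wanting instead to show that the two \emph{induced} boundary laws on $\extB\Lambda''$ themselves become close is essentially the convergence statement you are trying to prove; the argument as written is circular.

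What makes the paper's proof go through is a different, and stronger, monotonicity: the positive-association (FKG) inequality for the \emph{absolute value} $|f|$ under zero boundary conditions on $\extB\Lambda \subset \Ze^d$ (Benjamini--H\"aggstr\"om--Mossel). From this one derives that enlarging the domain with zero boundary conditions stochastically \emph{increases} $|f|$. With tightness, a stochastically increasing sequence converges, pinning down a unique limit for $|g_n|$; the paper then recovers the full law by attaching independent uniform signs to the components of $\{|g_n|>0\}$ in a consistent way. The key input, FKG for $|f|$ rather than for $f$ itself, is entirely absent from your outline and is the ingredient you need to rescue Step~2 (or replace it by a genuinely different uniqueness mechanism).
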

    The next theorem is the main result of this paper.
    \begin{thm}\label{thm:main}
      In dimension $d=2$, there are no $\Ze^2$-translation-invariant Gibbs measures. In particular, two-dimensional homomorphism height functions delocalize in the sense that alternative (i) of Theorem~\ref{thm:simple_dichotomy} holds when $d=2$.
    \end{thm}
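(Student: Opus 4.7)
Suppose toward a contradiction that $\mu$ is a $\Ze^2$-translation-invariant Gibbs measure on homomorphism height functions $\Z^2\to\Z$. Taking an ergodic component (which is again $\Ze^2$-translation-invariant) and using the extremality/ergodicity results quoted in the abstract, I may assume $\mu$ is $\Ze^2$-ergodic and extremal. The global symmetry $T_k:f\mapsto f+2k$ preserves the class of homomorphism height functions and commutes with $\Ze^2$-translations, so each pushforward $\mu_k:=(T_k)_*\mu$ is again such a Gibbs measure. Since $\mu$ is a probability measure one cannot have $f(\zero)\eqd f(\zero)+2$, so $\mu_0\neq\mu_1$; by the stochastic ordering of $\Ze^2$-translation-invariant ergodic Gibbs measures (also quoted in the abstract), $\{\mu_k\}_{k\in\Z}$ is a strictly stochastically increasing family.

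The core of the plan is then to convert this infinite totally-ordered family of Gibbs measures into a planar contradiction. Using the stochastic order, I would couple $f\sim\mu_0$ and $g\sim\mu_1$ monotonically so that $f\leq g$ pointwise; the difference $h:=g-f$ is a nonnegative, even-integer-valued, $\Ze^2$-stationary function with gradients in $\{-2,0,2\}$ and $\E h(v)=2$ at every $v$. The set $\{h=0\}$ marks vertices where $f=g$, and the edges where $h$ jumps from $0$ to $2$ delineate level contours of $f$ in the dual plane. Combined with the nested level sets $A_k:=\{f\geq 2k\}$ -- whose $\mu$-densities $p_k$ strictly decrease from $1$ down to $0$ -- this yields a family of pairwise-disjoint contour systems, one for each $k$. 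I would then run a planar Burton--Keane / Zhang-type argument on the ergodic percolation defined by $A_k$ for $k$ in the range where $p_k\in(0,1)$, combined with the Lipschitz rigidity of $f$, to produce infinitely many pairwise-disjoint infinite level contours in $\Z^2$ -- an impossibility by planar topology.

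The main obstacle, and the heart of the paper's contribution, is the two-dimensional delocalization-forcing input. A priori $\mu$ could be strongly concentrated (say with $\var_\mu f(\zero)<\infty$), in which case only finitely many $p_k$ lie in $(0,1)$ and the naive pigeonhole on level contours collapses. Ruling this out requires a genuine planar delocalization estimate -- in the spirit of Fr\"ohlich--Spencer for the integer-valued Gaussian free field -- that forces the level structure to be rich enough to feed into the topological argument. I expect this to require either a Peierls-type contour comparison exploiting the $f\leftrightarrow -f$ symmetry, a loop representation borrowed from the six-vertex-model literature, or an entropy-versus-energy balance tailored to the uniform measure; producing such an estimate, and marrying it to the stochastic-ordering/topology framework above, is where the bulk of the technical work must live.
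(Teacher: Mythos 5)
Your proposal is built on the right raw materials (FKG for homomorphism height functions, stochastic ordering of ergodic Gibbs measures, Burton--Keane uniqueness, Zhang's no-coexistence argument), but it mislocates where they enter, and more importantly it misses the one idea that actually closes the proof and instead substitutes a much harder analytic ingredient that the paper never needs.

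Where the paper's proof actually goes: the percolation and planar-topology machinery (cluster swapping, Burton--Keane, Zhang) is not applied to the level sets $A_k=\{f\ge 2k\}$ of a \emph{single} sample. It is applied to the difference of two \emph{independent} samples $f,g$ from $\Ze^2$-ergodic Gibbs measures, and what it produces is Theorem~\ref{thm:uniqueness_up_to_additive_constant}: in $d=2$, any two $\Ze^2$-ergodic Gibbs measures coincide up to a global shift by an even integer $2k$. The infinite totally-ordered family $\{\mu_k\}=\{(T_{2k})_*\mu\}$ that you single out is therefore not a contradiction in itself --- it is exactly compatible with the uniqueness theorem (every $\Ze^2$-ergodic Gibbs measure is one of the $\mu_k$), and the ``pigeonhole on level contours'' plan built on it has no obvious teeth. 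Your own assessment that one still needs a genuine planar delocalization estimate in the spirit of Fr\"ohlich--Spencer is where the proposal goes wrong: there is no such estimate anywhere in the paper's proof of Theorem~\ref{thm:main}, and inventing one is far harder than the actual closing step.

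That closing step, which your proposal is missing entirely, is a short parity argument. Given a $\Ze^2$-ergodic $\mu$, apply the symmetry $g(v):=f(-v+(1,0))-1$, which maps homomorphism height functions to homomorphism height functions and sends $\Ze^2$-ergodic Gibbs measures to $\Ze^2$-ergodic Gibbs measures. The resulting $\mu'$ has, by construction, $f((0,0))+f((1,0))=g((0,0))+g((1,0))+2$. But Theorem~\ref{thm:uniqueness_up_to_additive_constant} forces $\mu$ and $\mu'$ to be coupled with $\tilde f=\tilde g+2k$ a.s., giving $\tilde f((0,0))+\tilde f((1,0))=\tilde g((0,0))+\tilde g((1,0))+4k$. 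Comparing distributions forces $4k=2$, impossible for integer $k$. This is the whole of Section~\ref{sec:no translation invariant measure}: soft and short once the uniqueness theorem is in hand. So the gap in your proposal is twofold and genuine: (i) the percolation/Zhang toolkit must be aimed at a pair of independent samples to establish uniqueness-up-to-additive-constant, not at the level sets of one sample; and (ii) the contradiction is a half-integer shift produced by a reflection symmetry, not a Fr\"ohlich--Spencer-type variance lower bound, which is neither available from the method nor needed.
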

    The techniques used in the proof of Theorem~\ref{thm:main} also provide information on the set of Gibbs measures in higher dimensions.
    \begin{thm}\label{thm:ergodic_Gibbs_measures_are_extremal}
      In all dimensions $d\ge 3$: Suppose $\mu$ is an $\L$-ergodic Gibbs measure for a full-rank sublattice $\L$. Then $\mu$ is $\Ze^d$-ergodic, extremal and invariant under interchange of coordinates and reflections in coordinate hyperplanes. In addition, if $\mu_1, \mu_2$ are $\Ze^d$-ergodic Gibbs measures then either $\mu_1$ stochastically dominates $\mu_2$ or $\mu_2$ stochastically dominates $\mu_1$ (with respect to the pointwise partial order on $\Z$-valued functions on $\Z^d$).
    \end{thm}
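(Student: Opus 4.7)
\smallskip

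\noindent\textbf{Proof plan.} The plan is to build all four claims on an FKG-type comparison that exploits one structural fact: \emph{the pointwise max and min of two homomorphism height functions are again homomorphism height functions}. The parity condition \eqref{eq:parity_constraint} is inherited pointwise, and for adjacent $u\sim v$ the integer $(f_1\vee f_2)(u)-(f_1\vee f_2)(v)$ is odd by parity, while a short case analysis based on \eqref{eq:gradients_are_one} shows that its absolute value is at most $1$, so it equals $\pm 1$. Because each finite-volume measure $\mu^{\Lambda,\tau}$ is uniform on its support, the Holley inequality $\mu^{\Lambda,\tau_1}(g_1\vee g_2)\,\mu^{\Lambda,\tau_2}(g_1\wedge g_2)\ge \mu^{\Lambda,\tau_1}(g_1)\,\mu^{\Lambda,\tau_2}(g_2)$ holds trivially whenever $\tau_1\ge\tau_2$ on $\extB\Lambda$, giving the finite-volume stochastic domination $\mu^{\Lambda,\tau_1}\succeq\mu^{\Lambda,\tau_2}$.

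Lifting this to the infinite-volume stochastic ordering is the main technical step. Starting from independent samples $f_1\sim\mu_1$ and $f_2\sim\mu_2$ of two $\Ze^d$-ergodic Gibbs measures, I replace $(f_1,f_2)$ by $(f^\vee,f^\wedge):=(f_1\vee f_2,\,f_1\wedge f_2)$; the resulting joint distribution $\nu$ is $\Ze^d$-invariant and supported on pointwise-ordered pairs of homomorphism height functions. The crux is to show that, after decomposing $\nu$ into $\Ze^d$-ergodic components and (if necessary) swapping the two coordinates on each component, the two marginals are still $\mu_1$ and $\mu_2$. My plan for this is to use finite-volume Holley applied to large boxes with the boundary configuration frozen, which shows that swapping $f_1$ and $f_2$ on any connected region of $\{f_1\ne f_2\}$ preserves the relevant conditional distributions; the identification of the marginals then follows from a DLR consistency check in the thermodynamic limit. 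Once the marginals are identified, the $\Ze^d$-invariant event $\{f^\vee\ge f^\wedge\text{ everywhere}\}$ has $\nu$-probability $0$ or $1$ on each ergodic component, yielding $\mu_1\succeq\mu_2$ or $\mu_2\succeq\mu_1$. I expect this identification---ruling out that the max-min operation produces strictly intermediate marginals in infinite volume---to be the main obstacle.

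With stochastic ordering of $\L$-ergodic Gibbs measures in hand (the argument above applies equally at the $\L$-ergodic level), the remaining assertions follow from short cyclic symmetry arguments. For $\Ze^d$-invariance of $\mu$: for any $g\in\Ze^d$, the translate $g_*\mu$ is a $\L$-ergodic Gibbs measure and is thus stochastically comparable to $\mu$; since $g$ has finite order $n$ modulo $\L$, the chain $\mu\preceq g_*\mu\preceq g_*^2\mu\preceq\cdots\preceq g_*^n\mu=\mu$ collapses to equality, so $g_*\mu=\mu$. Combined with $\L$-ergodicity and $\L\subseteq\Ze^d$, this yields $\Ze^d$-ergodicity. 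The identical cyclic argument, with $g_*$ replaced by the action of a coordinate permutation or hyperplane reflection (both finite-order automorphisms of the Gibbs specification), gives the claimed symmetries. Extremality follows from the standard fact that, for lattice Gibbs measures, the tail sigma-algebra of a translation-invariant Gibbs measure coincides with its translation-invariant sigma-algebra, so $\Ze^d$-ergodicity upgrades to tail-triviality.
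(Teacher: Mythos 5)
Your cyclic-symmetry argument at the end matches the paper's Corollary~\ref{cor: invariance under permutation of coordinates and reflections} almost verbatim, so that portion is sound. And you are right that passing from $(f_1,f_2)$ to $(f_1\vee f_2,\,f_1\wedge f_2)$ is equivalent to a cluster swap on $\{f_1<f_2\}$ (every connected component of $\{f_1\neq f_2\}$ carries a single sign), which is the right object to consider. But there are two genuine gaps.

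First, the stochastic-ordering step. You correctly identify the obstruction---that the max/min operation typically does \emph{not} preserve the marginal distributions $\mu_1, \mu_2$---and then leave it as ``the main obstacle.'' The resolution in the paper is not an additional DLR consistency check, but a percolation argument: one must show that, almost surely, at most one of the events $\{$an infinite cluster of $\{f_1>f_2\}$ exists$\}$, $\{$an infinite cluster of $\{f_1<f_2\}$ exists$\}$ occurs. Once that is known, one can swap (equalize) only the \emph{finite} clusters of the disagreement set, which preserves both marginals (Corollary~\ref{cor:finite_cluster_equalizing}) and yields a pointwise-ordered coupling of $\mu_1$ and $\mu_2$. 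Establishing the ``no coexistence'' fact requires Burton--Keane uniqueness for the infinite clusters of $\{f>g\}$ (Theorem~\ref{thm:at_most_one_infinite_cluster}) together with the infinite-cluster-swap Lemma~\ref{lem:infinite_cluster_swap}, and, for the $\mu_1=\mu_2$ case, an additional value-statistics (density) argument (Lemma~\ref{lem:disjointness of oppositve events for f>g and f<g}). None of these appear in your proposal, and the Holley-type comparison of finite-volume measures does not by itself control what happens on the infinite disagreement clusters.

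Second, the extremality claim is wrong as stated. It is \emph{not} a general fact about lattice Gibbs measures that the tail $\sigma$-algebra of a translation-invariant Gibbs measure coincides with the translation-invariant $\sigma$-algebra, so ``$\Ze^d$-ergodic $\Rightarrow$ tail-trivial'' does not follow from standard theory. What ergodicity gives for free is extremality among \emph{translation-invariant} Gibbs measures, which is strictly weaker. The paper proves genuine extremality via Lemma~\ref{lem:disjointness of oppositve events for f>g and f<g} (no infinite disagreement cluster for two independent samples from $\mu$) combined with a disagreement-percolation consequence (Corollary~\ref{cor:coupling_to_get_extremality}); this is a nontrivial model-specific argument, not a soft abstract nonsense step. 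Without it your proof of extremality is a gap, and since your $\Ze^d$-invariance argument needs stochastic comparability of $\L$-ergodic measures---itself proved via extremality and the no-coexistence lemma---the later steps inherit the gap as well.
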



    We make several remarks regarding the theorems.

    The proof of the dichotomy in Theorem~\ref{thm:simple_dichotomy} makes use of a positive association (FKG) property for the \emph{absolute value} of homomorphism height functions shown by Benjamini, H\"aggstr\"om and Mossel~\cite{benjamini2000random}. The proof implies that the alternative satisfied in each dimension, as well as the limiting measure in the second alternative, remain unchanged if one replaces the domains $(\Lambda(L))$, $L\ge 1$ odd, by another sequence of subsets increasing to $\Z^d$ and having their external vertex boundary on the even sublattice.

    It is known that the single-site marginals of homomorphism height functions have a log-concave distribution on the integers (\cite[Lemma 8.2.4]{MR2251117} and \cite[Proposition 2.1]{MR1856513}). With this fact several {\em a priori} different notions of delocalization coincide. Specifically, for $f_L$ sampled from $\mu^{\Lambda(L),0}$ as in Theorem~\ref{thm:simple_dichotomy} the non-tightness condition~\eqref{eq:non-tightness} is equivalent to
    \begin{equation}\label{eq:no_exponential_moment}
      \sup_{L} \E(\exp(\alpha\cdot |f_L(\zero)|))=\infty\quad \text{for all $\alpha>0$}
    \end{equation}
and also to
    \begin{equation}\label{eq:non-tightness2}
        \sup_{L} \P(|f_L(\zero)|>M)=1\quad\text{for all $M$}.
    \end{equation}
   Noting that $\E(f_L(\zero))=0$ as $f_L\eqd -f_L$, we see that the above conditions are also equivalent to having unbounded variance, $\sup_L \var(f_L(\zero))=\infty$. These ideas are explained in more detail in Section~\ref{section: Log-concavity}.

    Delocalization also holds for one-dimensional homomorphism height functions in the sense that there are no $\Ze$-translation-invariant Gibbs measures and alternative~(i) of Theorem~\ref{thm:simple_dichotomy} holds when $d=1$. This is straightforward to prove using the ergodic theorem and the properties of simple random walk bridge. We thus focus throughout on dimensions $d\ge 2$. As mentioned, it is conjectured that alternative~(ii) of Theorem~\ref{thm:simple_dichotomy} holds for all $d\ge 3$ and this is known in sufficiently high dimensions~\cite{peled2017high,galvin2015phase}.


 In his Ph.D. thesis \cite{MR2251117}, the third author considered a broad class of random surface models, allowing nearest-neighbor interactions with arbitrary convex potentials whose heights take values either in the reals or in the integers, and obtained general results, including a variational principle and large deviation results for these models and a detailed discussion of the set of gradient Gibbs measures of the model with reference to their roughness or smoothness properties. The results obtained there imply and are more general than the properties of Gibbs measures given in Theorem~\ref{thm:ergodic_Gibbs_measures_are_extremal} and Theorem~\ref{thm:uniqueness_up_to_additive_constant} below.

One major contribution of this paper, however, is that we provide new proofs of these results from \cite{MR2251117}, which apply in less generality than those in \cite{MR2251117}, but which are significantly more accessible and concise. Since we focus specifically on graph homomorphisms to $\Z$, we are able to make many simplifications, allowing for a relatively short presentation. We hope these new proofs have value, both on their own merits and as a gateway to the more involved discussion in \cite{MR2251117}. In addition, the main result of this work, Theorem~\ref{thm:main}, requires a new argument. To briefly summarize the difference between our arguments and those in \cite{MR2251117}:
\begin{enumerate}
\item  The proof of Lemma \ref{lem:infinite_cluster_swap} given here and the analogous proof given in \cite[Chapter 8]{MR2251117} both use a form of ``cluster swapping'' in which one begins with a pair $(f,g)$ of height functions and then modifies the pair to create a new pair $(\tilde f, \tilde g)$. In this paper, the values of $f$ and $g$ are simply ``swapped'' on the set $\{v : f(v)>g(v) \}$. In \cite[Chapter 8]{MR2251117}, heights are allowed to be either real valued or integer valued (and nearest neighbor height differences can assume an unbounded range of values, unlike this paper where they are restricted to $\pm 1$) and because of this the procedure for determining the swapping set in \cite[Chapter 8]{MR2251117} ends up being more complicated.
\item The proof of Theorem \ref{thm: no coexistence} given here and the analogous proof given in \cite[Chapter 9]{MR2251117} both use FKG arguments, in a certain two-dimensional setting, to rule out the simultaneous co-existence of an infinite connected cluster and an infinite connected complementary cluster. However, \cite[Chapter 9]{MR2251117} considers gradient Gibbs measures with general slope, which lack rotation and reflection invariance (the only assumption is invariance w.r.t.\ translations by elements of {\em some} sublattice), and coping with the possible lack of symmetry necessitates some fairly involved arguments (involving, for example, the homotopy group of the countably punctured plane; see also \cite[Theorem 1.5]{DCRT2017} for a different approach to a similar issue). In this paper, an additional argument (Corollary~\ref{cor: invariance under permutation of coordinates and reflections}) is used to conclude that the zero slope measure has rotation and reflection invariance and these extra symmetries allow us to deduce Theorem \ref{thm: no coexistence} directly from Zhang's argument.
\item Delocalization of zero slope gradient Gibbs measures is not established in \cite{MR2251117} and a new argument is introduced in this paper, utilizing specific properties of the model under consideration, to prove the main result, Theorem~\ref{thm:main}. This argument can be formulated in a simple way (see Section~\ref{sec:no translation invariant measure}) and we expect it to be useful in other settings as well.
\end{enumerate}
Lastly, the following dichotomy theorem is proved in~\cite{DCHLRR2019}, significantly extending the $d=2$ case of Theorem~\ref{thm:simple_dichotomy}. In two dimensions, when $f_L$ is sampled from $\mu^{\Lambda(L),0}$ for $L$ odd, the variance of $f_L(\zero)$ either grows logarithmically with $L$ or stays bounded as $L$ tends to infinity. Theorem~\ref{thm:main} rules out the second alternative and thus logarithmic growth follows.


    \subsection{Acknowledgements} The work of NC was supported in part by the European Research Council starting grant 678520 (LocalOrder) and ISF grant nos.\ 1289/17, 1702/17, 1570/17, 2095/15 and 2919/19 and was carried out when he was a postdoctoral fellow at Tel Aviv University and the Hebrew Univeristy of Jerusalem. The work of RP was supported in part by Israel Science Foundation grant 861/15 and the European Research Council starting grant 678520 (LocalOrder). The work of SS was supported in part by NSF grants DMS 1209044 and DMS 1712862.

    We thank Alexander Glazman and Yinon Spinka for reading our draft and suggesting various improvements. Steven M. Heilman helped prepare Figure~\ref{fig:hom_with_level_line}. We thank the anonymous referee for multiple suggestions for improving the clarity of the arguments.

    \section{Localization dichotomy}
    In this section we will prove Theorem~\ref{thm:simple_dichotomy}. Let $d\ge 1$. For odd integers $L\ge 1$, let $f_L$ be sampled from $\mu^{\Lambda(L),0}$. It is clear that if alternative~(ii) of the theorem holds, i.e., the finite-volume Gibbs measures~$(\mu^{\Lambda(L),0})$, $L\ge 1$ odd, converge in distribution, then alternative~(i) of the theorem does not hold, i.e.,
    \begin{equation}\label{eq:tightness_at_zero}
      \text{the sequence of random variables $(f_L(\zero))$, $L\ge 1$ odd, is tight.}
    \end{equation}
    Thus to prove Theorem~\ref{thm:simple_dichotomy} we need only show that~\eqref{eq:tightness_at_zero} implies alternative~(ii).

    We rely on the following positive association (FKG) property for the \emph{absolute value} of random homomorphism height functions, which follows immediately from a result proven by Benjamini, H\"aggstr\"om and Mossel~\cite{benjamini2000random}. Positive association of the values themselves also holds (Lemma~\ref{lem:FKG_for_pointwise_partial_order}), and will be used later.
    \begin{prop}\label{prop:FKG_for_absolute_value}
      Let $\Lambda\subset\Z^d$ be non-empty and finite, with $\extB\Lambda\subset\Ze^d$. If $f$ is randomly sampled from $\mu^{\Lambda,0}$ then $|f|$ has positive association, in the sense that if $\varphi,\psi:\{0,1,2,\ldots\}^{\Lambda^+}\to[0,\infty)$ are non-decreasing (in the pointwise partial order on integer-valued functions) one has
      \begin{equation*}
        \E(\varphi(|f|)\psi(|f|))\ge \E(\varphi(|f|))\E(\psi(|f|)).
      \end{equation*}
    \end{prop}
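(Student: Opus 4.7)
The plan is to apply the FKG theorem (Ahlswede--Daykin) to the induced distribution of $|f|$ on $\N^{\Lambda^+}$. The main tasks are: (i) identify the support of this distribution as a distributive sublattice of $\N^{\Lambda^+}$; (ii) obtain an explicit formula for the induced density; (iii) verify the FKG lattice condition on this sublattice. For (i), I would characterize the support as the set of $h\colon\Lambda^+\to\N$ satisfying $h\equiv 0$ on $\extB\Lambda$, the parity condition $h(v)\equiv\|v\|_1\pmod{2}$, and $|h(u)-h(v)|=1$ for adjacent $u,v$. The first two conditions are trivially preserved under pointwise max and min; for the third, the parity condition forces $h_1(u)$ and $h_2(u)$ to share parity at each $u$, so a short case analysis on which of $h_1(u),h_2(u)$ is larger and which of $h_1(v),h_2(v)$ is larger shows the unit-gradient condition persists under both operations.

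For (ii), given $h$ in the support, a homomorphism height function $f$ with $|f|=h$ is determined by an independent choice of sign on each connected component of $S(h):=\{v\in\Lambda^+:h(v)>0\}$: adjacent nonzero values of $f$ must share a sign (else $|f(u)-f(v)|\ge 2$), and flipping the sign on a whole component preserves validity because that component is bordered entirely by vertices where $f=0$. Hence $\P(|f|=h)\propto 2^{c(h)}$ with $c(h)$ the number of components of $S(h)$, and (iii) reduces to the supermodularity inequality
\[
c(h_1\vee h_2)+c(h_1\wedge h_2)\;\ge\;c(h_1)+c(h_2).
\]

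The hardest step is this supermodularity statement. For arbitrary subsets of a graph the analogous inequality is false---taking $S_1=\{0,2\}, S_2=\{1\}$ in $\Z$ gives $1+0<2+1$---so the argument must exploit the rigidity of the supports that actually arise. The critical observation is that the parity constraint forces $S(h)\supseteq\Zo^d\cap\Lambda^+$, and the unit-gradient constraint then forces every vertex of $\{h=0\}$ to have all of its neighbors inside $S(h)$. This rigid ``checkerboard'' structure of the zero set rules out the pathological configurations, and I expect the inequality to follow from a surgery argument: each merging of components when passing from $S_1\cap S_2$ to $S_1$ (or $S_2$) should be paid for by a splitting when passing from $S_2$ (or $S_1$) to $S_1\cup S_2$, tracked along an explicit path through the odd-sublattice vertices that these sets share. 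With (iii) in hand, the FKG theorem applied to the distributive sublattice from (i) yields the required positive association.
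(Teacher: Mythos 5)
Your approach differs from the paper's, which simply cites \cite[Proposition 2.3]{benjamini2000random} after identifying the vertices of $\extB\Lambda$ to a single vertex (that reference proves positive association of $|f|$ for graph homomorphisms on arbitrary finite bipartite graphs pinned at one vertex). You instead verify the FKG lattice condition directly. Steps (i) and (ii) are correct: the support is closed under $\vee,\wedge$ by the parity case analysis you describe, and $\P(|f|=h)\propto 2^{c(h)}$ since each connected component of $\{h>0\}$ carries an independent sign.

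The gap is in step (iii). You state the supermodularity $c(h_1\vee h_2)+c(h_1\wedge h_2)\ge c(h_1)+c(h_2)$ and correctly identify it as the crux, but you offer only an ``expectation'' and a sketch of a surgery argument; as written, this is not a proof, and the hint (``each merging paid for by a splitting, tracked along an explicit path through odd-sublattice vertices'') does not obviously lead to one. The inequality \emph{is} true, and the rigidity you already noticed is exactly what makes it work, but the clean formulation is different from what you sketched. Write $S_i=\{h_i>0\}$; every vertex in $S_1\triangle S_2$ is even (odd vertices are forced into both), so for such a vertex $v$ the whole neighborhood $N(v)$ consists of odd vertices and therefore lies in $S_1\cap S_2$. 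Now pass from $T$ to $T\cup\{v\}$: adding $v$ changes the component count by $1-k(T,v)$, where $k(T,v)$ is the number of distinct components of $T$ meeting $N(v)$. Add the vertices of $S_1\setminus S_2$ one by one, in the same order, simultaneously to $S_2$ (yielding $S_1\cup S_2$) and to $S_1\cap S_2$ (yielding $S_1$). At each step the first evolving set contains the second, and $N(v)$ lies in both; since the ``same-component'' relation on $N(v)$ is coarser in the larger set, $k$ is no larger there, so the increment $1-k$ is no smaller. Telescoping gives $c(S_1\cup S_2)-c(S_2)\ge c(S_1)-c(S_1\cap S_2)$, which is the required supermodularity. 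Without this (or an equivalent) argument, your step (iii) remains a conjecture and the proposal is incomplete.
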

    In \cite[Proposition 2.3]{benjamini2000random} the result stated above has been proved for homomorphism height functions on general finite bipartite graphs, normalized to be $0$ at a fixed vertex. Proposition~\ref{prop:FKG_for_absolute_value} follows by identifying the vertices of $\extB \Lambda$ to a single vertex.

    Our use of the proposition is through the following consequence: If $\Lambda_1\subset\Lambda_2$ are non-empty, finite subsets of $\Z^d$ such that $\extB\Lambda_1, \extB\Lambda_2\subset\Ze^d$ then, when $g_1, g_2$ are randomly sampled from $\mu^{\Lambda_1,0}, \mu^{\Lambda_2, 0}$ respectively, one has that
    \begin{equation}\label{equation:dominates}
        \text{$|g_2|_{\Lambda_1^+}$ stochastically dominates $|g_1|$.}
    \end{equation}
    To prove this we apply Proposition~\ref{prop:FKG_for_absolute_value} with $f = g_2$, $\varphi(|f|):=1-1_{\{f(v)=0\text{ for all $v\in\extB\Lambda_1$}\}}$ (with $1_A$ denoting the indicator function of $A$) and $\psi$ an increasing function which depends only on the restriction of $f$ to $\Lambda_1^+$. Thereby we obtain
    $$\E(\psi(|g_2|))- \E(\psi(|g_1|))\P(g_2(v)=0\text{ for all $v\in\extB\Lambda_1$})\geq \E(\psi(|g_2|))(1-\P(g_2(v)=0\text{ for all $v\in\extB\Lambda_1$}))$$
    which implies \eqref{equation:dominates} after rearrangement.

    We may now derive the following lemma.
    \begin{lemma}
      Let $(\Lambda_n)$ be a sequence of non-empty, finite subsets of $\Z^d$ which increase to $\Z^d$ and satisfy $\extB\Lambda_n\subset\Ze^d$ for all $n$. If~\eqref{eq:tightness_at_zero} holds then the measures $\mu^{\Lambda_n,0}$ converge in distribution.
    \end{lemma}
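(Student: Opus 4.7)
The approach is to combine the FKG-based stochastic domination \eqref{equation:dominates} with the tightness hypothesis to show (a) tightness of the family $(\mu^{\Lambda_n,0})_n$ and (b) monotonicity of $(|g_n|)_n$, which together force convergence of $|g_n|$ on every finite window; and then to lift this to convergence of $g_n$ using the sign-flip symmetry inherent in the uniform measure.

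First, I would upgrade the tightness at $\zero$ to tightness of $(g_n(v))_n$ at every vertex $v$. The Lipschitz inequality $|f_L(v)|\le|f_L(\zero)|+\|v\|_1$ together with \eqref{eq:tightness_at_zero} makes $(f_L(v))_L$ tight. For the general sequence, for each $n$ pick $L=L(n)$ large enough that $\Lambda_n\subset\Lambda(L)$; then \eqref{equation:dominates} gives $|g_n(v)|\preceq|f_L(v)|$ stochastically, so $(g_n(v))_n$ is tight. This yields tightness of $(\mu^{\Lambda_n,0})_n$ on cylinder events and, by a diagonal/Prokhorov argument, subsequential weak limits, each of which is automatically a Gibbs measure.

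Second, I would apply \eqref{equation:dominates} along the sequence itself. For every bounded local function $\varphi$ that is nondecreasing in $|f|$, the numerical sequence $\E\varphi(|g_n|)$ is nondecreasing in $n$, and by the tightness just established it is bounded and hence convergent. Since the law of a nonnegative integer-valued random vector on any finite $V$ is uniquely determined by the expectations of increasing indicators $\prod_{v\in V}1_{|g(v)|\ge a_v}$, the marginal law of $|g_n|_V$ converges in distribution to a unique limit for every finite $V\subset\Z^d$.

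Third, I would lift this to convergence of $g_n$ itself, using that any subsequential limit $\mu$ has the $|f|$-marginal pinned down by Step 2. Under $\mu^{\Lambda_n,0}$, flipping the sign of $g_n$ on any finite connected component $C$ of $\{g_n\ne0\}$ is an involution on valid configurations (the surrounding values are zero, so the Lipschitz constraint on $\partial C$ is preserved), so conditional on $|g_n|$ the signs on distinct components are i.i.d.\ Rademacher. This local symmetry is preserved by the DLR equations and hence descends to $\mu$: conditional on the event (of probability tending to $1$ in $R$, by the tightness of Step~1) that all components of $\{g\ne0\}$ meeting a fixed finite $V$ are contained in $\Lambda(R)$, the sign pattern on those components is i.i.d.\ Rademacher given $|g|$. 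Therefore the law of $g|_V$ under any subsequential limit is uniquely determined by the limiting $|f|$-marginal, all subsequential limits agree, and the whole sequence $\mu^{\Lambda_n,0}$ converges.

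The main obstacle is the third step: the connected-component structure of $\{g\ne0\}$ is a non-local function of $|f|$, so the ``i.i.d.\ Rademacher signs on components'' description does not obviously pass to the thermodynamic limit. The tightness established in Step~1 is what saves the argument, through a truncation reducing attention to components contained in a large but finite ball where the component structure is determined by the local restriction of $|g|$.
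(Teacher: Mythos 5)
Your first two steps---tightness of $g_n(v)$ at every vertex, deduced from~\eqref{equation:dominates}, and convergence of $|g_n|$ on finite windows from stochastic monotonicity---are correct and coincide with the paper's proof. The gap is in the third step, where you assert that, by tightness, the event that every connected component of $\{g \neq 0\}$ meeting a fixed finite $V$ is contained in $\Lambda(R)$ has probability tending to $1$ as $R \to \infty$. This does not follow: tightness of the one-point marginals $|g_n(v)|$ says nothing about the geometry of the random set $\{g_n \neq 0\}$. Note that $\{g \neq 0\}$ always contains the entire odd sublattice $\Zo^d$, so whether the component of a vertex is finite is a percolation question about the even vertices with $g\neq 0$, a joint-distribution property orthogonal to the marginal tightness you have established. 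In the regimes where alternative~(ii) holds it is plausible that $\{g\neq 0\}$ percolates in the limit, in which case your truncation never captures the full component and the i.i.d.\ Rademacher description does not determine the sign on the infinite part.

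The paper's proof sidesteps this by coupling the entire sequence $(g_n)$, not just the absolute values: couple $|g_n|$ so that $|g_n(v)|$ is non-decreasing in $n$ and converges a.s.; fix i.i.d.\ uniform signs $(\eps_v)_{v\in\Z^d}$ and a total order on $\Z^d$; and, in realizing $g_n$ given $|g_n|$, give each connected component of $\{|g_n|>0\}$ the sign $\eps_w$ for $w$ its minimal vertex in the fixed order. Because $|g_n|$ is non-decreasing in $n$ the components are nested, so for each $v$ the minimal vertex of its component eventually stabilizes and $g_n(v)$ converges a.s.\ under this coupling---with no requirement that components of $\{g\neq 0\}$ be finite.
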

    \begin{proof}
      Let $g_n$ be sampled from $\mu^{\Lambda_n,0}$. Property~\eqref{equation:dominates} implies that the distributions of $|g_n|$ form a stochastically increasing sequence of measures. Convergence in distribution of $|g_n|$ will then follow by verifying that for each $v\in\Z^d$, the set of random variables $g_n(v)$ is tight (for $n\ge n_0(v)$ with $n_0(v)$ the smallest value for which $v\in\Lambda_{n_0(v)}$). This last property is in turn a consequence of the tightness assumption~\eqref{eq:tightness_at_zero} and of \eqref{equation:dominates}: For $v\in\Ze^d$, by~\eqref{equation:dominates}, the distribution of $|g_n(v)|$ is stochastically bounded by the distribution of $|f_L(\zero)|$ where $L$ is chosen so that $\Lambda_n\subset\Lambda(L)+v$. For $v\notin\Ze^d$ the distribution of $|g_n(v)|$ is stochastically bounded by the distribution of $|g_n(w)|+1$ for a neighbor $w$ of $v$, by the definition of homomorphism height function.

      We have shown that the random functions $|g_n|$ converge in distribution. Let us deduce the same for the $g_n$ themselves. This is done by exhibiting a coupling of the $g_n$ in which $g_n(v)$ converges almost surely for each $v$.

      First, as the distributions of $|g_n|$ stochastically increase to a limit, there exists a coupling of the different $|g_n|$ so that $|g_n(v)|$ is non-decreasing for each $v$ and converges almost surely. We assume that the $|g_n|$ are coupled in this way. Second, to extend this to a coupling of the $g_n$, introduce a collection $(\eps_v)_{v\in\Z^d}$ of uniform and independent signs $\pm 1$ and fix a natural-numbers ordering of $\Z^d$. Note that for each $n$ the distribution of $g_n$ conditioned on $|g_n|$ has the following structure: In each connected component of the set $\{v \colon |g_n(v)|> 0\}$ the values of $g_n$ equal the values of $|g_n|$ times a random sign, with the signs being uniform and independent between different components. We couple the conditional distributions of $g_n$ given $|g_n|$ by deciding that the sign of each component equals $\eps_v$ for the vertex $v$ in the component which is lowest in the fixed order. As $|g_n|$ are non-decreasing, this choice ensures that for each $v$ the sign of $g_n(v)$ will eventually stabilize as $n$ increases, and thus that $g_n(v)$ converges almost surely as we wanted to show.
    \end{proof}
    Now assume the tightness condition~\eqref{eq:tightness_at_zero}. It follows that for any sequence of subsets $(\Lambda_n)$ as in the lemma, the sequence of finite-volume Gibbs measures $(\mu^{\Lambda_n,0})$ converges in distribution. As is standard, the limiting measure $\mu$ is a Gibbs measure. To see that it is the same for all choices of $(\Lambda_n)$ one may take two such sequences of subsets and interlace them (omitting elements as necessary to obtain an increasing sequence). This implies that $\mu$ is $\Ze^d$-translation-invariant, so that alternative (ii) of the theorem holds.

    \section{No translation-invariant Gibbs measures}\label{sec:no translation invariant measure}
    Our proof of Theorem~\ref{thm:main} is based on the following uniqueness statement for ergodic Gibbs measures, which is adapted from ~\cite[Section 9]{MR2251117}. The uniqueness statement is proved in the following sections, where Theorem~\ref{thm:ergodic_Gibbs_measures_are_extremal} is further deduced.
    \begin{thm}\label{thm:uniqueness_up_to_additive_constant}
      In dimension $d=2$: Let $\mu, \mu'$ be $\Ze^2$-ergodic Gibbs measures. Then there is an integer $k$ and a coupling of $\mu,\mu'$ such that if $(\tilde{f},\tilde{g})$ are sampled from the coupling then, almost surely,
      \begin{equation}\label{eq:f_equal_g_plus_2k}
        \tilde{f} = \tilde{g} + 2k.
      \end{equation}
    \end{thm}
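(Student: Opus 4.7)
The plan is to produce a coupling of $\mu$ and $\mu'$ under which the height difference $h:=\tilde g-\tilde f$ is almost surely a constant even integer.

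First I reduce to a monotone, $\Ze^2$-ergodic coupling. Starting from any coupling $(f,g)$ of $\mu,\mu'$, the cluster-swap procedure of Lemma~\ref{lem:infinite_cluster_swap} applied on the set $\{v:f(v)>g(v)\}$ produces a new coupling $(\tilde f,\tilde g)$ with the same marginals and $\tilde f\le\tilde g$ everywhere. I then pass to a $\Ze^2$-ergodic component; since $\mu$ and $\mu'$ are themselves $\Ze^2$-ergodic, the component retains the marginals $\mu,\mu'$, so I may and do assume the monotone coupling is $\Ze^2$-ergodic. Set $h:=\tilde g-\tilde f$, a nonnegative, even-integer-valued function on $\Z^2$, and for every integer $k\ge 0$ introduce the nested level set $B_k:=\{v:h(v)\ge 2k\}$. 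By ergodicity each $B_k$ has almost-sure density $p_k:=\P(h(\zero)\ge 2k)$, with $p_0=1$, $p_k$ nonincreasing, and $p_k\to 0$ (since $h$ is finite almost surely).

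The crux is the claim that $p_k\in\{0,1\}$ for every $k$. Granted this, there is a unique $k^*\ge 0$ with $p_{k^*}=1$ and $p_{k^*+1}=0$; then $h\equiv 2k^*$ almost surely, yielding~\eqref{eq:f_equal_g_plus_2k} with $k=-k^*$. I argue the claim by contradiction: suppose $p_k\in(0,1)$ for some $k\ge 1$. Then $B_k$ and $B_k^c$ partition $\Z^2$ with both having positive density, and by a standard planar-duality argument for $\Ze^2$-ergodic site configurations on $\Z^2$, at least one of them contains an infinite connected cluster almost surely. I next apply the cluster-swap lemma to the shifted pair $(\tilde f,\tilde g-2k)$, swapping on $\{v:\tilde g(v)-2k>\tilde f(v)\}$, which coincides with $B_k$ up to the boundary where $h=2k$. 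The resulting pair (after shifting back) is still a valid coupling and interchanges the roles of $B_k$ and $B_k^c$, producing a symmetry between the events ``$B_k$ contains an infinite cluster'' and ``$B_k^c$ contains an infinite cluster.'' Combined with the invariance of the ergodic coupling under coordinate permutations and reflections supplied by Corollary~\ref{cor: invariance under permutation of coordinates and reflections}, the hypotheses of Theorem~\ref{thm: no coexistence} (Zhang's two-dimensional no-coexistence argument) are met; that theorem forbids $B_k$ and $B_k^c$ from \emph{simultaneously} containing infinite clusters. The symmetry thus forces \emph{neither} to percolate, contradicting the preceding sentence.

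The main obstacle is the implementation of the shifted cluster-swap so that it genuinely produces the $B_k\leftrightarrow B_k^c$ symmetry required to invoke Theorem~\ref{thm: no coexistence}, together with the verification that Zhang's argument applies at the zero-slope point (where the additional permutation/reflection symmetries of Corollary~\ref{cor: invariance under permutation of coordinates and reflections} become essential). Once that two-dimensional input is in place, the remainder of the argument is bookkeeping: passing to ergodic components, tracking densities of the level sets $B_k$, and reading off $k^*$ from the jump in $p_k$.
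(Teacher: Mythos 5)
Your overall plan---control the level sets of a height difference by a Zhang-type no-coexistence argument and then read off the constant $k$---is close in spirit to the paper's strategy, but the concrete steps as written contain gaps that undermine the proof.

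\textbf{The reduction to a monotone coupling is not achieved.}
You invoke Lemma~\ref{lem:infinite_cluster_swap} to produce ``a new coupling $(\tilde f,\tilde g)$ with the same marginals and $\tilde f\le\tilde g$ everywhere.'' Neither claim follows from that lemma. The infinite-cluster swap only swaps values on \emph{infinite} components of $\{f>g\}$; on finite components of $\{f>g\}$ one still has $\tilde f>\tilde g$, so the resulting pair is not ordered. Moreover, swapping values on infinite clusters changes the marginal laws of the two coordinates (it is precisely the \emph{finite}-cluster operations of Lemma~\ref{lem:finite_cluster_swap} and Corollary~\ref{cor:finite_cluster_equalizing} that preserve marginals). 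So at this stage you do not yet have a monotone coupling of $\mu$ and $\mu'$, and obtaining one is in fact the substantive content of the theorem. The paper goes the other way: it starts from the \emph{independent} product coupling (which is automatically a Gibbs measure for homomorphism pairs and is $\Ze^2$-translation-invariant and extremal), and only at the very end extracts a deterministic coupling by using Corollary~\ref{cor:stochastic domination} in both directions.

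\textbf{A monotone coupling from stochastic domination is not a Gibbs measure for homomorphism pairs.}
Even if you first appeal to Corollary~\ref{corollary:stochastic_domination} to totally order the two measures and choose a monotone coupling, that coupling is just \emph{some} monotone coupling; it need not satisfy the product specification required to run Lemma~\ref{lem:infinite_cluster_swap}, Theorem~\ref{thm:at_most_one_infinite_cluster}, or your ``shifted cluster swap.'' All the cluster-swap and disagreement-percolation machinery in the paper applies only to Gibbs measures for homomorphism pairs; the paper keeps inside this class by always starting from the independent product and applying swap operations that preserve the class.

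\textbf{Positive density of both $B_k$ and $B_k^c$ does not yield an infinite cluster.}
You assert that if $p_k\in(0,1)$ then ``by a standard planar-duality argument'' at least one of $B_k, B_k^c$ has an infinite cluster. For a general $\Ze^2$-ergodic site configuration this is false (consider a periodic configuration where one color forms an independent set), and no finite-energy hypothesis is available here. The paper sidesteps this entirely. It never claims that some level set must percolate; instead, after establishing $\P(E^+_k),\P(E^-_k)\in\{0,1\}$ by extremality of the product coupling and $\P(E^+_k\cap E^-_k)=0$ by Zhang's theorem (Theorem~\ref{thm: no coexistence}), it uses only the monotonicity $\P(E^+_{k+1})\le\P(E^+_k)$, $\P(E^-_{k+1})\ge\P(E^-_k)$, and the observation that $\P(E^-_k)=0$ for all $k$ would give $f\ge_{\mathrm{st}}g+2k$ for all $k$, which is impossible. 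This pins down a $k_0$ with $\P(E^+_{k_0+1})=\P(E^-_{k_0})=0$, and applying Corollary~\ref{cor:stochastic domination} in both directions gives equality in law; the same corollary also furnishes the required coupling with $\tilde f = \tilde g + 2k_0$.

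\textbf{The ``shifted cluster-swap symmetry'' argument is vague and fails for the same reasons.}
To invoke Theorem~\ref{thm: no coexistence} you need positive association of the indicator field, the permutation/reflection symmetry, and uniqueness of infinite clusters. In the paper, positive association of $1_{f\ge g+2k}$ is obtained from the fact that $(f,-g)$ are \emph{independent} samples from two extremal Gibbs measures (each having FKG by Lemma~\ref{lem:FKG_for_pointwise_partial_order}), so the field is an increasing function of an FKG pair. In your setup the coupling is not independent, extremality of the joint law is not established, and the ``shifted swap'' that is supposed to give a $B_k\leftrightarrow B_k^c$ symmetry changes marginals, so it does not preserve the structure you need.

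In short: the key input you would need---that for a suitable coupling the indicator of $\{f\ge g+2k\}$ has FKG, uniqueness of infinite clusters holds, and Zhang's theorem applies---is obtained in the paper by working throughout with the \emph{independent} product coupling of the two extremal measures, not with a monotone coupling. You should replace the first paragraph of your plan with that setup and replace the ``positive density forces percolation'' step with the monotonicity-plus-stochastic-domination argument above.
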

    Theorem~\ref{thm:main} is derived from this statement with the following additional argument. Suppose, in order to obtain a contradiction, that $\mu$ is a $\Ze^2$-translation-invariant Gibbs measure. By replacing with an element of its ergodic decomposition, if necessary, we assume that $\mu$ is $\Ze^2$-ergodic (recalling that the $\Ze^2$-ergodic decomposition of a $\Ze^2$-translation-invariant Gibbs measure consists of $\Ze^2$-ergodic Gibbs measures \cite[Theorem 14.15]{MR2807681}). Let $f$ be randomly sampled from $\mu$. Define a homomorphism height function $g$ on $\Z^2$ by
    \begin{equation}\label{eq:g_from_f}
      g(v) := f(-v + (1,0)) - 1.
    \end{equation}
    One checks in a straightforward way that the distribution of $g$ is also a $\Ze^2$-ergodic Gibbs measure, which we denote by $\mu'$. Thus, by Theorem~\ref{thm:uniqueness_up_to_additive_constant}, there exists an integer $k$ and a coupling of $\mu, \mu'$ such that, when sampling $(\tilde{f},\tilde{g})$ from this coupling, the equality \eqref{eq:f_equal_g_plus_2k} holds almost surely. Continuing, we observe that \eqref{eq:g_from_f} implies that
    \begin{equation*}
      f((0,0)) + f((1,0)) = g((1,0)) + g((0,0)) + 2.
    \end{equation*}
    Thus, the equality in distribution
    \begin{equation*}
      \tilde{f}((0,0)) + \tilde{f}((1,0)) \eqd \tilde{g}((0,0)) + \tilde{g}((1,0)) + 2
    \end{equation*}
    also holds.
    However, by \eqref{eq:f_equal_g_plus_2k},
    \begin{equation*}
      \tilde{f}((0,0)) + \tilde{f}((1,0)) = \tilde{g}((0,0)) + \tilde{g}((1,0)) + 4k.
    \end{equation*}
    This implies that $4k = 2$, which contradicts the fact that $k$ is an integer. The contradiction establishes Theorem~\ref{thm:main}.

    \section{Tools}
    In this section we detail some well-known tools which will be used in the proof of Theorem~\ref{thm:uniqueness_up_to_additive_constant}. All of the described results apply in general dimension $d$ except the coexistence results of Section~\ref{sec:no_coexistence} which rely on planarity.

    \subsection{Positive association for pointwise partial order}
    The maximum and minimum of two homomorphism height functions are still homomorphism height functions. Thus the following property is an immediate consequence of the standard FKG inequality \cite{fortuin1971correlation} (see \cite[Proposition~2.2]{benjamini2000random}), with its second part derived from the reverse martingale convergence theorem.
     \begin{lemma}\label{lem:FKG_for_pointwise_partial_order}
       Let $\Lambda\subset\Z^d$ be a non-empty finite subset and $\tau:\extB\Lambda\to\Z$ be a homomorphism height function. If $f$ is randomly sampled from $\mu^{\Lambda,\tau}$ then $f$ has positive association, in the sense that if $\varphi,\psi:(\Z)^{\Lambda^+}\to[0,\infty)$ are non-decreasing (in the pointwise partial order on integer-valued functions) one has
      \begin{equation}\label{eq:positive_association_for_values}
        \E(\varphi(f)\psi(f))\ge \E(\varphi(f))\E(\psi(f)).
      \end{equation}
      Consequently, if $\mu$ is an \emph{extremal} Gibbs measure and $f$ is sampled from $\mu$ then \eqref{eq:positive_association_for_values} continues to hold for all measurable, non-decreasing $\varphi,\psi:(\Z)^{\Z^d}\to[0,\infty)$.
     \end{lemma}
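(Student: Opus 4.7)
The plan is to handle the two claims separately: the finite-volume positive association will follow from a direct application of the Fortuin--Kasteleyn--Ginibre (FKG) inequality applied to a uniform measure on a distributive lattice, and the extremal infinite-volume version will be deduced from it via the Gibbs property together with the reverse martingale convergence theorem.

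For the finite-volume claim, the first step is to verify that the support of $\mu^{\Lambda,\tau}$ is a sublattice of $\Z^{\Lambda^+}$ under pointwise $\vee$ and $\wedge$. Given two homomorphism height functions $f_1,f_2$ on $\Lambda^+$ agreeing with $\tau$ on $\extB\Lambda$, the pointwise maximum $M:=f_1\vee f_2$ clearly agrees with $\tau$ on $\extB\Lambda$ and preserves the parity condition~\eqref{eq:parity_constraint}, since $f_1(v)$ and $f_2(v)$ share the parity dictated by $v$ and this parity is preserved by $\max$. The pointwise max of $1$-Lipschitz integer-valued functions is again $1$-Lipschitz, so $|M(u)-M(v)|\le 1$ for neighbours $u\sim v$, and combined with the parity condition this forces $|M(u)-M(v)|=1$, confirming~\eqref{eq:gradients_are_one}. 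The same argument treats $\wedge$. Since the support is finite (Lipschitz + boundary pins $f$ within a bounded range) and $\mu^{\Lambda,\tau}$ is uniform on it, the FKG lattice condition $\mu(f\vee g)\mu(f\wedge g)\ge \mu(f)\mu(g)$ holds trivially, and the classical FKG inequality yields~\eqref{eq:positive_association_for_values}.

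For the extremal infinite-volume claim, I would first prove the inequality when $\varphi,\psi$ are non-decreasing, bounded, and depend on only finitely many coordinates. Choose an increasing sequence of finite subsets $(\Lambda_n)$ exhausting $\Z^d$, whose interiors eventually contain the coordinate dependencies of $\varphi$ and $\psi$. By the defining Gibbs property, conditional on $f_{\Lambda_n^c}$ the restriction $f_{\Lambda_n^+}$ is distributed as $\mu^{\Lambda_n,f_{\extB\Lambda_n}}$, and hence the finite-volume inequality already proved yields
\begin{equation*}
\E\bigl(\varphi(f)\psi(f)\bigm| f_{\Lambda_n^c}\bigr)\ge \E\bigl(\varphi(f)\bigm| f_{\Lambda_n^c}\bigr)\,\E\bigl(\psi(f)\bigm| f_{\Lambda_n^c}\bigr)
\end{equation*}
$\mu$-almost surely. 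The sigma-algebras $\sigma(f_{\Lambda_n^c})$ decrease to the tail sigma-algebra, which is $\mu$-trivial by extremality of $\mu$. The reverse martingale convergence theorem then implies that each of the three conditional expectations above converges almost surely to the corresponding unconditional expectation, so the inequality survives in the limit. A standard monotone approximation---truncating $\varphi,\psi$ at level $n$ and replacing them by local functions via conditioning on coordinates inside $\Lambda_n^+$, then letting $n\to\infty$ via monotone convergence---extends the result to arbitrary measurable, non-decreasing $\varphi,\psi:\Z^{\Z^d}\to[0,\infty)$.

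The only delicate step is the lattice property in the finite-volume case, which genuinely uses the parity condition to upgrade ``$1$-Lipschitz'' to ``gradient of modulus exactly $1$'' after taking pointwise max or min. Everything else---the FKG inequality itself, the identification of $\bigcap_n\sigma(f_{\Lambda_n^c})$ with the tail sigma-algebra, and the monotone approximation extending the inequality to unbounded non-local functions---is standard.
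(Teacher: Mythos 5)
Your proof is correct and follows exactly the same approach as the paper's one-sentence argument: verify the distributive-lattice structure (that pointwise $\vee$ and $\wedge$ of two homomorphism height functions with the given boundary values remain homomorphism height functions), invoke the classical FKG inequality for a uniform measure on a distributive lattice, and deduce the extremal infinite-volume version from the Gibbs property together with the reverse martingale convergence theorem and tail triviality. You have simply fleshed out the three ingredients the paper cites (the lattice property, Fortuin--Kasteleyn--Ginibre, and backward martingale convergence) without changing the route.
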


    \subsection{Disagreement percolation}\label{sec:disagreement_percolation}
    In this section we consider the operation of swapping finite clusters of disagreement of a pair of homomorphism height functions.

Say that a measure $\rho$ on the space\footnote{Functions in $(\Z^2)^{\Z^d}$ are written as $(f,g):\Z^d\to\Z^2$, where $f(v)$ is the first value at $v$ and $g(v)$ is the second value at $v$.} $(\Z^2)^{\Z^d}$ is a \emph{Gibbs measure for homomorphism pairs} if, when $(f,g)$ is sampled from $\rho$, then for any non-empty, finite $\Lambda\subset\Z^d$, the conditional distribution of $(f, g)_{\Lambda^+}$, conditioned on $(f, g)_{\Lambda^c}$, equals the product measure $\mu^{\Lambda,\tau_f}\otimes \mu^{\Lambda,\tau_g}$ with $\tau_f := f_{\extB\Lambda}$ and $\tau_g := g_{\extB\Lambda}$, almost surely (that is, $\rho$ conforms to the product specification). We note two simple properties which will be used repeatedly:
\begin{itemize}[leftmargin=20pt]
	\item If $\mu_1, \mu_2$ are Gibbs measures (for homomorphism height functions on $\Z^d$) then the product measure $\mu_1\otimes\mu_2$ is a Gibbs measure for homomorphism pairs.
	\item If $\rho$ is a Gibbs measure for homomorphism pairs and $(f,g)$ is sampled from $\rho$, then each of the marginal distributions of $f$ and of $g$ are Gibbs measures (for homomorphism height functions on~$\Z^d$).
\end{itemize}

   In the following lemma and later in the paper we shorthand expressions such as $\{v\in\Z^d\colon f_v>g_v\}$ to $\{f>g\}$. Write $\Cf$ for the (countable) collection of all finite, connected (non-empty) subsets of $\Z^d$.
    \begin{lemma}\label{lem:finite_cluster_swap}
      Let $(f,g)$ be sampled from a Gibbs measure for homomorphism pairs. Let $\eps:\Cf\to\{0,1\}$. Define a new pair of homomorphism height functions $(\tilde{f}, \tilde{g})$ as follows:
      \begin{equation}\label{eq:finite_cluster_swap_def}
        (\tilde{f},\tilde{g})(v) = \begin{cases}
          (g,f)(v)& \text{there is a finite connected component $C$ of $\{f\neq g\}$ containing $v$ and $\eps_C = 1$},\\
          (f,g)(v)& \text{otherwise}.
        \end{cases}
      \end{equation}
      Then $(\tilde{f}, \tilde{g})$ has the same distribution as $(f,g)$.
    \end{lemma}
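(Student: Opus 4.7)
The plan is to first prove a single-cluster invariance and then bootstrap to the general statement by approximation. For a fixed non-empty finite connected $C_0\subset\Z^d$, define $S_{C_0}$ to be the operator which, applied to a homomorphism pair $(f,g)$, interchanges the values of $f$ and $g$ on $C_0$ precisely when $C_0$ is a connected component of $\{f\neq g\}$, and otherwise acts as the identity. The first step is to show that $S_{C_0}(f,g)\eqd(f,g)$ whenever $(f,g)$ is sampled from a Gibbs measure for homomorphism pairs; the lemma will then follow from an exhaustion argument that applies finitely many single-cluster swaps at a time.

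To verify the single-cluster invariance I condition on $(f,g)_{C_0^c}$. The Gibbs-pairs hypothesis gives that the conditional distribution of $(f,g)_{C_0^+}$ is $\mu^{C_0,\tau_f}\otimes\mu^{C_0,\tau_g}$ with $\tau_f=f_{\extB C_0}$ and $\tau_g=g_{\extB C_0}$, that is, uniform on pairs of homomorphism extensions of the prescribed boundary. The event $A:=\{C_0\text{ is a connected component of }\{f\neq g\}\}$ decomposes as $A=A_1\cap A_2$ with $A_1:=\{f\neq g\text{ on all of }C_0\}$ an interior event and $A_2:=\{\tau_f=\tau_g\}$ determined by the conditioning. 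When $A_2$ fails, $A$ cannot occur and $S_{C_0}$ acts as the identity, so the conditional law is trivially preserved. When $A_2$ holds the map $(f,g)_{C_0}\mapsto(g,f)_{C_0}$, leaving the boundary fixed, sends valid pairs to valid pairs (the boundary compatibility equations for the new $f$ reduce to those for $g$ because $\tau_f=\tau_g$), is its own inverse, and preserves $A_1$. Hence $S_{C_0}$ is an involution on the support of the uniform conditional law and that law is invariant under it. Integrating out the conditioning yields $S_{C_0}(f,g)\eqd(f,g)$.

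For the full lemma, exhaust $\Z^d$ by finite sets $\Lambda_n\uparrow\Z^d$ and set $\eps^{(n)}(C):=\eps(C)\mathbf 1[C\subset\Lambda_n]$. Only finitely many connected subsets lie in $\Lambda_n$, so the swap $(\tilde f^{(n)},\tilde g^{(n)})$ associated with $\eps^{(n)}$ coincides with the composition of finitely many single-cluster swaps $S_C$. These operators commute because distinct components of $\{f\neq g\}$ are disjoint and each $S_C$ preserves $\{f\neq g\}$ as a set, so the cluster structure is invariant along the composition. Iterating the single-cluster invariance (each intermediate pair is again a Gibbs-pairs sample) gives $(\tilde f^{(n)},\tilde g^{(n)})\eqd(f,g)$ for every $n$. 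Finally, for every vertex $v$ the swap decision at $v$ stabilizes: if $v\in\{f=g\}$ or $v$ lies in an infinite component of $\{f\neq g\}$ then no approximant swaps at $v$; if $v$ lies in a finite component $C$ then $C\subset\Lambda_n$ for all large $n$, so $\eps^{(n)}(C)=\eps(C)$. Hence $(\tilde f^{(n)},\tilde g^{(n)})\to(\tilde f,\tilde g)$ pointwise almost surely, and passing to the limit in the finite-dimensional distributions gives $(\tilde f,\tilde g)\eqd(f,g)$.

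The principal obstacle is the single-cluster step, where one must verify that swapping $(f,g)$ on $C_0$ yields a valid homomorphism pair and defines a bijection on the conditional support. This is precisely where the boundary identity $\tau_f=\tau_g$ enforced by $A_2$ enters. The remainder is an approximation reducing the potentially infinite collection of active clusters to finitely many, together with the observation that every finite cluster eventually lies inside any exhausting box.
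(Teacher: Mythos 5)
Your proof is correct and rests on the same underlying combinatorial fact as the paper's, but is organized a bit differently. Both proofs end with the same reduction: approximate $\eps$ by finitely supported $\eps^{(n)}$, note that the swap decision at each vertex stabilizes, and pass to the limit. The difference is in how the finitely supported case is handled. The paper fixes a single box $\Lambda$ large enough to contain every $C$ with $\eps_C=1$, observes that $(\tilde f,\tilde g)$ agrees with $(f,g)$ outside $\Lambda$, and argues that the map $(f,g)_\Lambda\mapsto(\tilde f,\tilde g)_\Lambda$ is an involution on the set of homomorphism pairs extending the conditional boundary data, hence preserves the conditional uniform law. You instead isolate the swap on one fixed connected $C_0$ as a self-contained lemma, condition on $(f,g)_{C_0^c}$, and apply the involution argument there, splitting the defining event into the interior part $A_1$ and the boundary identity $A_2=\{\tau_f=\tau_g\}$; the finite case then follows by composing single-cluster operators. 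Your factorization makes the role of $\tau_f=\tau_g$ (the boundary compatibility needed for the swapped pair to be valid) slightly more explicit and localizes the conditioning, at the modest cost of an extra step verifying that the single-cluster swaps commute and that intermediate pairs remain Gibbs-pair samples. The paper's version compresses all of this into one bijection statement inside $\Lambda$. Either way the argument is sound.
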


    \begin{cor}\label{cor:finite_cluster_equalizing}
      Let $(f,g)$ be sampled from a Gibbs measure for homomorphism pairs. Let $\eps:\Cf\to\{0,1\}$. Define a new pair of homomorphism height functions $(\bar{f}, \bar{g})$ as follows:
      \begin{equation}\label{eq:finite_cluster_equalizing_def}
        (\bar{f},\bar{g})(v) = \begin{cases}
          (f,f)(v)& \!\!\!\text{there is a finite connected component $C$ of $\{f\neq g\}$ containing $v$, and $\eps_C = 0$},\\
          (g,g)(v)& \!\!\!\text{there is a finite connected component $C$ of $\{f\neq g\}$ containing $v$, and $\eps_C = 1$},\\
          (f,g)(v)& \!\!\!\text{otherwise}.
        \end{cases}
      \end{equation}
      Then $\bar{f}$ has the same distribution as $f$ and $\bar{g}$ has the same distribution as $g$.
    \end{cor}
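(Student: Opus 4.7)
The plan is to derive the corollary by two applications of Lemma~\ref{lem:finite_cluster_swap}, one for each marginal. The key observation is that although $\bar{f}$ and $\bar{g}$ are defined jointly, the individual marginal $\bar{f}$ (resp.\ $\bar{g}$) depends on $(f,g)$ only through an operation of the type appearing in Lemma~\ref{lem:finite_cluster_swap}, so that \textbf{the first coordinate of the swapped pair already equals $\bar{f}$} (and similarly for $\bar{g}$ after complementing $\eps$).

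More concretely, to prove $\bar{f} \eqd f$, I would apply Lemma~\ref{lem:finite_cluster_swap} with the given $\eps$ and compare pointwise the first coordinate $\tilde{f}$ of the swapped pair $(\tilde{f},\tilde{g})$ with $\bar{f}$. On a finite connected component $C$ of $\{f\neq g\}$ with $\eps_C=1$, the swap gives $\tilde{f}=g$, matching the value $\bar{f}=g$ prescribed by~\eqref{eq:finite_cluster_equalizing_def}; on a finite component with $\eps_C=0$ and outside all finite components, the swap leaves $\tilde{f}=f$, again matching $\bar{f}=f$. Hence $\tilde{f}=\bar{f}$ pointwise, and since Lemma~\ref{lem:finite_cluster_swap} guarantees $(\tilde{f},\tilde{g})\eqd (f,g)$, the marginal identity $\bar{f}\eqd f$ follows.

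For the second statement $\bar{g}\eqd g$, I would apply Lemma~\ref{lem:finite_cluster_swap} instead with the complementary choice $\eps':=1-\eps$. A symmetric case-check on each finite component of $\{f\neq g\}$ shows that the resulting second coordinate $\tilde{g}'$ satisfies $\tilde{g}'=\bar{g}$ pointwise: on a component with $\eps_C=1$ (so $\eps'_C=0$), both equal $g$; on a component with $\eps_C=0$ (so $\eps'_C=1$), both equal $f$; and outside all finite components both equal $g$. Lemma~\ref{lem:finite_cluster_swap} then yields $\bar{g}\eqd g$.

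There is no genuine obstacle here; the only thing to be careful about is bookkeeping the four cases (finite component with $\eps_C=0$, finite component with $\eps_C=1$, in an infinite component of $\{f\neq g\}$, in $\{f=g\}$) to confirm the pointwise equality of $\tilde f$ with $\bar f$ and of $\tilde g'$ with $\bar g$. Note that $\eps$ is deterministic (or, if desired, jointly independent of $(f,g)$), so Lemma~\ref{lem:finite_cluster_swap} applies verbatim in each case.
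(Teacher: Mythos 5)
Your argument is correct and is exactly the paper's proof: observe that the first coordinate of the swapped pair from Lemma~\ref{lem:finite_cluster_swap} using $\eps$ equals $\bar f$ pointwise, and that the second coordinate of the swapped pair using $1-\eps$ equals $\bar g$ pointwise, then invoke the distributional identity from the lemma. The case bookkeeping matches the intended argument precisely.
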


    \begin{proof}[Proof of Lemma~\ref{lem:finite_cluster_swap}]
      One checks that if $\eps_n:\Cf\to\{0,1\}$ is a sequence converging to $\eps$ (i.e., for each $C\in \Cf$ there exists $n_C$ such that $(\eps_n)_C = \eps_C$ for all $n\ge n_C$) then $(\tilde{f}_n, \tilde{g}_n)$, defined via the recipe~\eqref{eq:finite_cluster_swap_def} with $\eps_n$ replacing $\eps$, converges (pointwise) to $(\tilde{f}, \tilde{g})$, almost surely. In particular, $(\tilde{f}_n, \tilde{g}_n)$ converges to $(\tilde{f}, \tilde{g})$ in distribution. Thus it suffices to prove the lemma under the assumption that $\eps_C=0$ for all but finitely many $C$; an assumption which we now impose.

      Fix a non-empty, finite $\Lambda\subset\Z^d$, large enough to satisfy that $C\subset\Lambda$ whenever $\eps_C = 1$. By definition, $(\tilde{f},\tilde{g})_{\Lambda^c} = (f,g)_{\Lambda^c}$. In addition, conditioned on $(f,g)_{\Lambda^c}$, the distribution of $(f,g)_{\Lambda}$ is uniform over all pairs of homomorphism height functions extending the boundary conditions, as $(f,g)$ is sampled from a Gibbs measure for homomorphism pairs. It thus suffices to prove that this latter uniformity statement holds also for $(\tilde{f}, \tilde{g})$. This now follows from the straightforward fact that, given $\eps$ and conditioned on $(f,g)_{\Lambda^c}$ (which equals $(\tilde{f},\tilde{g})_{\Lambda^c}$), the definition \eqref{eq:finite_cluster_swap_def} yields a bijection (in fact, an involution) between the homomorphism pairs $(f,g)_{\Lambda}$ and the homomorphism pairs $(\tilde{f}, \tilde{g})_{\Lambda}$ which extend the boundary conditions.
    \end{proof}
    \begin{proof}[Proof of Corollary~\ref{cor:finite_cluster_equalizing}]
      Lemma~\ref{lem:finite_cluster_swap} stipulates that the distribution of $(\tilde{f}, \tilde{g})$ (defined by~\eqref{eq:finite_cluster_swap_def}) equals the distribution of $(f, g)$, for any $\eps:\Cf\to\{0,1\}$. However, comparing the definitions~\eqref{eq:finite_cluster_swap_def} and \eqref{eq:finite_cluster_equalizing_def} makes it clear that $\bar{f} = \tilde{f}$ when the same sequence $\eps$ is used in both definitions. Thus $\bar{f}\eqd f$. Similarly, $\bar{g}=\tilde{g}$ when the sequence $\eps$ is used in~\eqref{eq:finite_cluster_equalizing_def} and the sequence $1-\eps$ is used in~\eqref{eq:finite_cluster_swap_def}. Thus $\bar{g} \eqd g$.
    \end{proof}

    \begin{cor}\label{cor:stochastic domination}
      Let $(f,g)$ be sampled from a Gibbs measure for homomorphism pairs. If
      \begin{equation}\label{eq:no_f_less_than_g_infinite_component}
        \P(\text{there is an infinite connected component of $\{f<g\}$}) = 0
      \end{equation}
      then the distribution of $f$ stochastically dominates the distribution of $g$. Similarly, if
      \begin{equation}\label{eq:no_f_different_from_g_infinite_component}
        \P(\text{there is an infinite connected component of $\{f\neq g\}$}) = 0
      \end{equation}
      then the distribution of $f$ equals the distribution of $g$.
    \end{cor}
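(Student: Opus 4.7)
The plan is to reduce both claims to Corollary~\ref{cor:finite_cluster_equalizing} via a single sign-constancy observation about $f-g$ on connected components of $\{f\neq g\}$. First I would verify that on any such component, the sign of $f-g$ is constant. This follows from the parity condition, which forces $f(v)-g(v)$ to be an even integer at every vertex, together with the Lipschitz property, which gives $|(f-g)(u)-(f-g)(v)|\le 2$ for adjacent $u,v$. Thus on a connected component of $\{f\neq g\}$, the integer-valued function $f-g$ takes nonzero even values that change by at most $2$ between neighbors, so it cannot switch sign without passing through $0$ and leaving the component.

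For the first claim, I would then apply Corollary~\ref{cor:finite_cluster_equalizing} with an arbitrary $\eps\colon\Cf\to\{0,1\}$ to produce a coupling $(\bar f,\bar g)$ with $\bar f\eqd f$ and $\bar g\eqd g$. By construction, $\bar f=\bar g$ at every vertex lying in some finite connected component of $\{f\neq g\}$, while $(\bar f,\bar g)=(f,g)$ elsewhere. Hypothesis \eqref{eq:no_f_less_than_g_infinite_component}, together with the sign-constancy observation, shows that almost surely every infinite connected component of $\{f\neq g\}$ is entirely contained in $\{f>g\}$. Hence $\bar f\ge\bar g$ pointwise almost surely, which exhibits the desired stochastic domination of $g$ by $f$.

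For the second claim, hypothesis \eqref{eq:no_f_different_from_g_infinite_component} says directly that almost surely every connected component of $\{f\neq g\}$ is finite. The same construction then forces $\bar f=\bar g$ identically almost surely, so that $f\eqd\bar f=\bar g\eqd g$.

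The main, though mild, obstacle is really only the sign-constancy observation; without it the hypothesis "no infinite component of $\{f<g\}$" would not translate into pointwise control on infinite components of $\{f\neq g\}$, and the equalizing operation of Corollary~\ref{cor:finite_cluster_equalizing} would not close the argument. Everything else is a direct application of previously established tools and is essentially bookkeeping.
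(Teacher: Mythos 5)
Your argument is correct and is essentially the paper's proof: it applies Corollary~\ref{cor:finite_cluster_equalizing} to produce the coupling $(\bar f,\bar g)$ and then reads off pointwise domination or equality. The sign-constancy observation you make explicit is the same fact the paper records just before Claim~\ref{claim:sigma_algebra_equality} (a walk from a point with $f>g$ to one with $f<g$ must cross $\{f=g\}$) and uses implicitly here; working with an arbitrary $\eps$ rather than $\eps\equiv 0$ is also fine since the construction equalizes $\bar f$ and $\bar g$ on finite components of $\{f\neq g\}$ regardless.
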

    \begin{proof}
      Define $(\bar{f}, \bar{g})$ according to the recipe~\eqref{eq:finite_cluster_equalizing_def} with $\eps\equiv 0$. Assuming~\eqref{eq:no_f_less_than_g_infinite_component} we obtain that $\bar{f}\ge \bar{g}$, which yields the required stochastic domination as $\bar{f}\eqd f$ and $\bar{g}\eqd g$ by Corollary~\ref{cor:finite_cluster_equalizing}. Similarly, assuming~\eqref{eq:no_f_different_from_g_infinite_component} it holds that $\bar{f}=\bar{g}$ implying that $f\eqd g$.
    \end{proof}
    \begin{remark}
      The corollary generalizes, for homomorphism height functions, the disagreement percolation criterion of van den Berg \cite{MR1207673} who proved that the assumption~\eqref{eq:no_f_different_from_g_infinite_component} implies equality of the marginal distributions when the Gibbs measure from which $(f,g)$ are sampled is a product measure ($f$ is independent from $g$). Van den Berg-Maes \cite{van1994disagreement} also discuss generalizations but we have not seen Corollary~\ref{cor:stochastic domination} explicitly in the literature. We note that it is important that $(f,g)$ are sampled from a Gibbs measure for homomorphism pairs rather than just from a coupling of Gibbs measures with the same specification, as \cite{haggstrom2001note} gives an example of such a coupling of \emph{distinct} Gibbs measures for which~\eqref{eq:no_f_different_from_g_infinite_component} holds. We need this generalisation in the proof of Corollary \ref{corollary:mean}.

    \end{remark}

\begin{remark}
Cluster swapping was applied in \cite[Section 8.2]{MR2251117} to give short proofs of  log-concavity of the distribution of single-site marginals (See Section~\ref{section: Log-concavity}) and monotonicity of the Gibbs measures with respect to boundary conditions, for general class of random surfaces with convex interactions (see also \cite[Section 2]{cohen2017rarity}).
\end{remark}

  \begin{cor}\label{cor:coupling_to_get_extremality}
      Let $\mu_1, \mu_2$ be Gibbs measures and $f,g$ be \emph{independently} sampled from $\mu_1, \mu_2$, respectively. If~\eqref{eq:no_f_different_from_g_infinite_component} holds then the two measures are equal and \emph{extremal}.
    \end{cor}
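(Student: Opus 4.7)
The plan is to apply Corollary~\ref{cor:stochastic domination} twice. First, since $\mu_1$ and $\mu_2$ are Gibbs measures, the product $\mu_1\otimes\mu_2$ is a Gibbs measure for homomorphism pairs (first bullet in Section~\ref{section:cluster_swap}). Under the hypothesis~\eqref{eq:no_f_different_from_g_infinite_component}, Corollary~\ref{cor:stochastic domination} immediately yields $\mu_1=\mu_2$. Denote this common measure by $\mu$; note that the hypothesis~\eqref{eq:no_f_different_from_g_infinite_component} now reads: under $\mu\otimes\mu$, the set $\{f\neq g\}$ has no infinite connected component almost surely.

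For extremality, I would argue by contradiction. Suppose there is a tail event $A$ with $0<\mu(A)<1$, and set
\begin{equation*}
\mu' := \mu(\,\cdot\,|\,A), \qquad \mu'' := \mu(\,\cdot\,|\,A^c).
\end{equation*}
A standard short verification shows $\mu'$ and $\mu''$ are themselves Gibbs measures: for any finite $\Lambda\subset\Z^d$, the tail event $A$ is measurable with respect to $f_{\Lambda^c}$, so conditioning on $A$ in addition to $f_{\Lambda^c}$ does not change the DLR-prescribed conditional distribution $\mu^{\Lambda,f_{\extB\Lambda}}$ of $f_{\Lambda^+}$. Consequently, the product measure $\mu'\otimes\mu''$ is again a Gibbs measure for homomorphism pairs.

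The key observation is that $\mu'\otimes\mu''$ is absolutely continuous with respect to $\mu\otimes\mu$, with density bounded by $1/(\mu(A)\mu(A^c))$. Hence the event that $\{f\neq g\}$ contains an infinite connected component, already of $\mu\otimes\mu$-probability zero by the first step, is also of $\mu'\otimes\mu''$-probability zero. Applying Corollary~\ref{cor:stochastic domination} a second time, now to $\mu'\otimes\mu''$, yields $\mu'=\mu''$, which directly contradicts $\mu'(A)=1$ and $\mu''(A)=0$. Therefore no such $A$ exists and $\mu$ is extremal.

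The only substantive point requiring care is verifying that conditioning a Gibbs measure on a tail event produces a Gibbs measure, but as indicated above this is immediate from the tail-measurability of $A$ with respect to $f_{\Lambda^c}$ for every finite $\Lambda$. Everything else is a direct application of Corollary~\ref{cor:stochastic domination} together with the absolute continuity trick that transfers the non-percolation hypothesis from the diagonal coupling $\mu\otimes\mu$ to the off-diagonal coupling $\mu'\otimes\mu''$.
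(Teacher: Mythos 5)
Your proof is correct and follows essentially the same strategy as the paper: decompose a putative non-extremal $\mu$ into two distinct Gibbs measures whose independent product is absolutely continuous with respect to $\mu\otimes\mu$, transfer the non-percolation hypothesis~\eqref{eq:no_f_different_from_g_infinite_component} across, and apply Corollary~\ref{cor:stochastic domination} a second time to reach a contradiction. The only cosmetic difference is that you obtain the decomposition by conditioning on a nontrivial tail event (using the standard fact that extremality is equivalent to triviality of the tail $\sigma$-algebra), whereas the paper writes $\mu=\tfrac{1}{2}(\nu_1+\nu_2)$ directly via the extremal decomposition; both routes are standard and equivalent here.
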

\begin{proof}
The equality $\mu_1=\mu_2$ follows from Corollary~\ref{cor:stochastic domination}. We are left to prove that $\mu=\mu_1=\mu_2$ is extremal. Otherwise $\mu=\frac{1}{2}(\nu_1+\nu_2)$ for distinct Gibbs measures $\nu_1$ and $\nu_2$ (\cite[Theorem 14.15]{MR2807681}). Thereby the independent coupling of $\mu$ with itself would decompose as the average of the independent couplings of $\nu_1$ with itself, $\nu_2$ with itself, $\nu_1$ with $\nu_2$ and $\nu_2$ with $\nu_1$. As~\eqref{eq:no_f_different_from_g_infinite_component} holds for each of these couplings it follows from Corollary~\ref{cor:stochastic domination} that $\nu_1 = \nu_2$, a contradiction.
\end{proof}

    \subsection{Uniqueness of infinite clusters (following Burton-Keane)} It is well known in percolation theory on $\Z^d$ (and other amenable groups) that, almost surely, there is at most one infinite connected component of open sites. This fact was first proven by Aizenman--Kesten--Newman~\cite{aizenman1987uniqueness}, with a subsequent shorter proof provided by Burton--Keane~\cite{MR990777}. We show here that the ideas of Burton and Keane admit extension to the random set $\{f>g\}$ for suitable distributions of $(f,g)$. Similar arguments are used in~\cite[Lemma 8.5.6]{MR2251117}.
    \begin{thm}\label{thm:at_most_one_infinite_cluster}
      Let $(f,g)$ be sampled from an $\L$-translation-invariant Gibbs measure for homomorphism pairs, for a full-rank sublattice $\L$. Then
      \begin{equation*}
        \P(\text{the set $\{f>g\}$ has two or more infinite connected components}) = 0.
      \end{equation*}
    \end{thm}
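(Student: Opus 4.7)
The plan is to adapt the Burton--Keane strategy for uniqueness of infinite clusters to the disagreement set $\{f>g\}$. First I would apply the $\L$-ergodic decomposition to the given Gibbs measure for homomorphism pairs; the decomposition preserves the product specification so each component is still a Gibbs measure for homomorphism pairs, and we may assume the measure is $\L$-ergodic. Since $\L$ has full rank, the random variable $N$ equal to the number of infinite connected components of $\{f>g\}$ is $\L$-invariant and hence almost-surely constant in $\{0,1,2,\ldots\}\cup\{\infty\}$. The goal is to rule out $N\ge 2$.

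The essential tool is an insertion tolerance property coming from the pair specification: for any finite $B\subset\Z^d$, the conditional law of $(f,g)_B$ given $(f,g)_{B^c}$ is the uniform product $\mu^{B,\tau_f}\otimes\mu^{B,\tau_g}$ over legal extensions of the boundary values. Consequently, any explicit pair $(f',g')_B$ that extends the boundary has strictly positive conditional probability, and substituting it in preserves membership in the support of the measure.

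To rule out finite $N=k\ge 2$, I would note that by translation invariance the probability that a box $B=\Lambda(L)$ is touched by all $k$ infinite components of $\{f>g\}$ tends to $1$ as $L\to\infty$. Conditioning on such an event and on the exterior $(f,g)_{B^c}$, I would construct a legal extension $(f',g')_B$ for which $\{f'>g'\}$ inside $B$ contains a spanning tree joining the $k$ boundary pieces of the different infinite components; insertion tolerance then puts positive probability on a configuration with at most $k-1$ infinite components, contradicting the a.s.\ constancy of $N$. To rule out $N=\infty$ I would follow the trifurcation count: call $v$ a trifurcation if $v\in\{f>g\}$ and removing $v$ from $\{f>g\}$ splits the infinite cluster containing $v$ into at least three infinite sub-clusters. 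Under $N=\infty$, at least three infinite components touch a fixed box around $v$ with positive probability, and an insertion-tolerance modification inside that box then realises $v$ as a trifurcation with positive probability. By $\L$-translation invariance the density of trifurcations in $\Lambda(L)$ is bounded below by a positive constant, whereas the classical combinatorial argument (assigning to each trifurcation three vertex-disjoint infinite branches and following them to $\partial\Lambda(L)$) bounds the number of trifurcations in $\Lambda(L)$ by $O(L^{d-1})$, a contradiction.

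The main obstacle will be carrying out the insertion-tolerance constructions under the rigid homomorphism constraint: $f'-g'$ must take even integer values and change by $0$ or $\pm 2$ between neighbours, so the sign pattern of $f'-g'$ inside $B$ cannot be prescribed freely. To build the required fills I would first extend the boundary data of $f$ and of $g$ separately to $B$ using the Kirszbraun-type extension property for homomorphism height functions cited in the excerpt, and then perform explicit local surgeries in $B$ to shape $\{f'>g'\}$ into the desired spanning or trifurcating configuration. Verifying that such legal surgeries always exist for sufficiently large $B$, uniformly over the possible boundary data that arises, is the principal technical step and is where I expect the bulk of the work to concentrate.
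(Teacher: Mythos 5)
Your high-level plan is the right one — ergodic decomposition, then a Burton--Keane trifurcation argument exploiting the uniform pair-specification as a weak finite-energy property — and it matches the paper's strategy. The genuine gap is exactly the step you flag as "the principal technical step" and then leave unaddressed: showing that an appropriate joining/trifurcating fill inside a large box exists. You phrase the goal as verifying that such surgeries exist "uniformly over the possible boundary data that arises," but this is false as stated: for some admissible boundary data on $\extB\Lambda(m)$ (e.g.\ when $f\le g$ everywhere on the boundary), no legal interior extension can put $\Lambda(M)$ inside $\{f'>g'\}$, regardless of how large $m$ is. The paper's crucial ingredient, which your proposal is missing, is a \emph{flatness} (zero-slope) lemma for $\L$-ergodic measures: the ergodic theorem gives $\sup_{\|v\|_1=n}|f(v)|/n\to 0$ a.s., hence for any $M$ one can choose $m$ so that with probability $>3/4$ the boundary values on $\extB\Lambda(m)$ are within $(-m/2,m/2)$ and therefore the maximal extension of $f$ exceeds $m/2$ on $\Lambda(M)$ while the minimal extension of $g$ is below $-m/2$ there. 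Only on this high-probability event is the surgery possible; the argument crucially works on a good event of probability $>1/4$, not uniformly. Without this step your "insertion tolerance" construction does not go through.

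Two smaller points. First, you propose single-vertex trifurcations, i.e.\ "removing $v$ splits the cluster into three infinite pieces." Under the homomorphism constraint the set $\{f'>g'\}$ cannot be shaped freely at the scale of one vertex, and the modification really happens across the whole box $\Lambda(m)$; the paper therefore works with trifurcation \emph{balls} (removal of a connected set $D$ inside $v+\Lambda(M)$ disconnects into $\ge 3$ infinite pieces), and then adapts the Burton--Keane density-zero counting to this coarser notion. Second, extending $f$ and $g$ separately by Kirszbraun does not by itself control the sign of $f'-g'$; what controls it is that the flatness estimate lets you push the extension of $f$ up toward its pointwise maximum and the extension of $g$ down toward its pointwise minimum inside $\Lambda(m)$. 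If you incorporate the flatness lemma and switch to trifurcation balls, your outline becomes the paper's proof.
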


    Fix an integer $M>0$. For a configuration $\omega\in\{0,1\}^{\Z^d}$, regarded as a subgraph of $\Z^d$, say that the graph ball $v + \Lambda(M)$ is a \emph{trifurcation ball} in $\omega$ if
    \begin{itemize}[leftmargin=20pt]
      \item there is an infinite connected component $\mathcal{C}$ of $\omega$ intersecting $v + \Lambda(M)$, and
      \item there is a connected subgraph $D\subset \mathcal{C}\cap(v + \Lambda(M))$ (induced from $\mathcal C$) such that $\mathcal{C}\setminus D$ has at least three infinite connected components.
    \end{itemize}
    The argument of Burton and Keane~\cite[Theorem 2]{MR990777} adapts to show that, for any $\omega\in\{0,1\}^{\Z^d}$, the density of trifurcation balls is zero. That is,
    \begin{equation*}
      \lim_{L\to\infty} \frac{1}{L^d}|\{v\in\Lambda(L)\colon \text{$v + \Lambda(M)$ is a trifurcation ball}\}| = 0.
    \end{equation*}
	For this equation to hold, it is important that $D\subset \mathcal{C}\cap(v + \Lambda(M))$ is a \emph{connected} subset in the definition of trifurcation balls. For instance, it does not hold if instead the definition of trifurcation balls only requires $\mathcal{C}\setminus (v + \Lambda(M))$, instead of $\mathcal{C}\setminus D$, to have at least three infinite connected components. Consider $\omega\in \{0,1\}^{\Z^2}$ such that $\omega_{v}=1$ if and only if the first coordinate of $v$ is even or the second coordinate is zero. For such $\omega$ we would have (by the alternative definition) that $(v + \Lambda(3))$ is a trifurcation ball for all $v\in \Z^d$.

    The following statement is an immediate consequence, making use of the ergodic theorem.
    \begin{lemma}\label{Lemma: no trifurcation}
      Let $\P$ be a measure on $\{0,1\}^{\Z^d}$ which is $\L$-translation-invariant for some full-rank sublattice $\L$. Then
      \begin{equation*}
        \P(\Lambda(M)\text{ is a trifurcation ball}) = 0.
      \end{equation*}
    \end{lemma}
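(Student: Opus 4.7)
The plan is to derive the lemma by averaging the deterministic Burton--Keane density bound against $\P$. Set $p:=\P(\Lambda(M)\text{ is a trifurcation ball})$ and suppose for contradiction that $p>0$. By $\L$-translation-invariance, the event that $v+\Lambda(M)$ is a trifurcation ball has probability exactly $p$ for every $v\in\L$, so for every integer $L\ge 1$,
$$
\E\!\left[\,\frac{1}{L^d}\bigl|\{v\in\Lambda(L)\cap\L:\, v+\Lambda(M)\text{ is a trifurcation ball}\}\bigr|\,\right]
\;=\;\frac{|\Lambda(L)\cap\L|}{L^d}\,p.
$$
Because $\L$ has full rank in $\Z^d$, the ratio $|\Lambda(L)\cap\L|/L^d$ converges as $L\to\infty$ to a strictly positive constant (the asymptotic density of $\L$, equal to $1/|\det M|$ for any integer generator matrix $M$ of $\L$). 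So the right-hand side is bounded below by a positive constant times $p$ for all large $L$.

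On the other hand, the deterministic density statement recalled in the paragraph immediately preceding the lemma asserts that for \emph{every} configuration $\omega\in\{0,1\}^{\Z^d}$,
$$
\lim_{L\to\infty}\frac{1}{L^d}\bigl|\{v\in\Lambda(L):\, v+\Lambda(M)\text{ is a trifurcation ball}\}\bigr|=0;
$$
restricting the count to $v\in\L$ only shrinks it further, so the random variable inside the expectation above converges to $0$ almost surely. Since that random variable is uniformly bounded by $1$, the dominated convergence theorem forces its expectation to tend to $0$ as well, contradicting the strictly positive lower bound obtained above. Hence $p=0$, as claimed. (Alternatively, one can package the same averaging step using the pointwise ergodic theorem on each component of the $\L$-ergodic decomposition of $\P$, which is the formulation hinted at in the statement.)

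There is essentially no obstacle: the only non-trivial input is the deterministic Burton--Keane density bound quoted just above the lemma, and the only extra ingredient needed is the positivity of the asymptotic density of a full-rank sublattice, which is a routine geometric fact. The rest is a one-line interchange of limit and expectation justified by bounded convergence.
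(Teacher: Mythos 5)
Your proof is correct and rests on the same core idea as the paper: averaging the deterministic Burton--Keane density bound over the measure using $\L$-translation-invariance. The paper packages this average via the pointwise ergodic theorem (hence needs the $\L$-ergodic decomposition of $\P$), whereas you do it more directly by Fubini plus dominated convergence, which sidesteps ergodicity entirely and is, if anything, slightly more elementary; you even flag the ergodic-theorem route as the alternative, so you have both versions. One small slip: $|\Lambda(L)\cap\L|/L^d$ does not converge to $1/|\det M|$ but to $(2^d/d!)\cdot 1/|\det M|$, since $|\Lambda(L)|/L^d \to 2^d/d!$ for the $\ell^1$-ball; likewise the random variable in your expectation is bounded by roughly $2^d/d!$ rather than by $1$. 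Neither slip affects the argument, as all you use is that the limit is a strictly positive constant and that the sequence is uniformly bounded.
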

    The proof of Theorem \ref{thm:at_most_one_infinite_cluster} relies on Lemma~\ref{Lemma: no trifurcation} and Lemma~\ref{Lemma:flatness_gives_extreme_heights} given below.

    {\bf Flatness of $\L$-translation-invariant measures:} Fix a full rank sublattice $\L\subset \Z^d$. Let us recall some basic results about $\L$-translation-invariant probability measures on homomorphism height functions.

    Suppose $f$ is a given homomorphism height function and $\Delta\subset \Z^d$. Consider the homomorphism height functions given by
$$f_{\mathrm{max}, \Delta}(v)=\min\{f(w)+\|v-w\|_1\colon w\in \Delta\},$$
$$f_{\mathrm{min}, \Delta}(v)=\max\{f(w)-\|v-w\|_1\colon w\in \Delta\}.$$
It follows that
$$(f_{\mathrm{max}, \Delta})_\Delta=(f_{\mathrm{min}, \Delta})_\Delta=f_{\Delta}$$
and if $g$ is a homomorphism height function such that $f_\Delta=g_\Delta$ then pointwise
$f_{\mathrm{min}, \Delta}\leq g\leq f_{\mathrm{max}, \Delta}$.

    \begin{lemma}\label{Lemma:flatness_gives_extreme_heights} Let $f$ be sampled from an $\L$-translation-invariant probability measure on homomorphism height functions. For all positive integers $M$ there exists $m$ such that
\begin{equation}\label{equation: minimal maximal height}
\begin{split}
\P((f_{\mathrm{max}, \extB \Lambda(m)})_{\Lambda(M)}> \frac{m}{2})&> \frac{3}{4}\\
\P((f_{\mathrm{min}, \extB \Lambda(m)})_{\Lambda(M)}< -\frac{m}{2})&> \frac{3}{4}
\end{split}.
\end{equation}
    \end{lemma}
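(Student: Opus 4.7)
First, unpack the two inequalities. For any $w \in \extB\Lambda(m)$ one has $\min_{v \in \Lambda(M)} \|v-w\|_1 = m+1-M$ (attained on the $\ell^1$-segment from $0$ to $w$), so swapping the order of the two minima gives
\[
\min_{v \in \Lambda(M)} (f_{\mathrm{max}, \extB\Lambda(m)})(v) = (m+1-M) + \min_{w \in \extB\Lambda(m)} f(w),
\]
and analogously for $f_{\mathrm{min}, \extB\Lambda(m)}$. Hence the two probabilities in \eqref{equation: minimal maximal height} are, respectively, the probabilities that $\min_{w \in \extB\Lambda(m)} f(w) > -m/2 + M - 1$ and $\max_{w \in \extB\Lambda(m)} f(w) < m/2 - M + 1$. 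Writing $A_m^-$ and $A_m^+$ for the complementary bad events, the plan is to show that $\P(A_m^\pm) \to 0$ as $m \to \infty$, and then to pick any sufficiently large $m$.

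The core of the proof is a first-moment estimate combined with a Lipschitz ``inflation'' step; I carry it out for $A_m^-$, the case $A_m^+$ being identical after replacing $f$ by $-f$. On $A_m^-$, measurably fix a vertex $w_\ast \in \extB\Lambda(m)$ with $f(w_\ast) \le -m/2 + M - 1$. The Lipschitz property then forces $f(v) \le -m/4 + M - 1$ on the entire $\ell^1$-ball $B(w_\ast, \lfloor m/4 \rfloor)$, which lies in $\Lambda(2m)$ and whose cardinality is at least $c_d m^d$ for some constant $c_d > 0$ and all large $m$. Setting
\[
N_m := \bigl|\{v \in \Lambda(2m) : f(v) \le -m/4 + M - 1\}\bigr|,
\]
we get $N_m \ge c_d m^d$ deterministically on $A_m^-$, and hence $\E[N_m] \ge c_d m^d \cdot \P(A_m^-)$.

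For the matching upper bound on $\E[N_m]$ I use $\L$-translation invariance. Since $\L$ has full rank, $\Z^d/\L$ is finite and admits coset representatives lying in a ball of bounded $\ell^1$-radius $D$. For each $v \in \Z^d$, if $r$ is the representative of $v + \L$, then $f(v) \eqd f(r)$, while $f(0) \ge f(r) - D$ by Lipschitz, so $\P(f(v) \le t) \le \P(f(0) \le t + D)$ uniformly in $v$. Summing over $v \in \Lambda(2m)$ gives $\E[N_m] \le |\Lambda(2m)| \cdot \P(f(0) \le -m/4 + M - 1 + D) = O(m^d)\cdot \P(f(0) \le -m/4 + M - 1 + D)$. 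Combining with the lower bound yields $\P(A_m^-) = O(\P(f(0) \le -m/4 + M - 1 + D))$, and this tends to $0$ as $m \to \infty$ because $f(0)$ is an almost-surely finite integer-valued random variable.

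The only non-routine step is the Lipschitz inflation: a single pointwise boundary failure must be amplified to a macroscopic ball of interior failures before first-moment bookkeeping has any chance of closing. A naive union bound over $\extB\Lambda(m)$ would cost a factor of $m^{d-1}$ that cannot be absorbed, since we have no quantitative tail estimate on $f(0)$; inflating the failure region to volume $\asymp m^d$ and comparing against the $\asymp m^d$-sized window $\Lambda(2m)$ precisely cancels the polynomial factors and reduces the claim to the qualitative fact that $f(0)$ is almost-surely finite.
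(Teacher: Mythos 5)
Your proof is correct, and it takes a genuinely different route from the paper's. The paper first establishes the almost-sure zero-slope statement $\lim_{n\to\infty}\sup_{\|v\|_1=n}|f(v)|/n=0$ (its equation \eqref{equation: slope of ergodic measure}), using the pointwise ergodic theorem applied to the increments $f((i+1)v)-f(iv)$ along each lattice direction together with a Poincar\'e-recurrence argument to show the ergodic limit vanishes, and only then derives the two probability bounds. You skip the a.s.\ slope statement entirely: after the same unpacking—namely that $(f_{\mathrm{max},\extB\Lambda(m)})_{\Lambda(M)}>m/2$ is equivalent to $\min_{\extB\Lambda(m)}f>-m/2+M-1$, and symmetrically for $f_{\mathrm{min}}$—you prove $\P(A_m^{\pm})\to 0$ by a first-moment argument: on $A_m^-$ the Lipschitz property amplifies one bad boundary vertex to a ball of $\asymp m^d$ interior vertices with $f\le -m/4+O(1)$, while $\L$-translation-invariance plus boundedness of coset representatives gives $\E[N_m]=O(m^d)\cdot\P(f(\zero)\le -m/4+O(1))$, so the polynomial factors cancel and the claim reduces to the a.s.\ finiteness of $f(\zero)$. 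This is more elementary (no ergodic theorem, no recurrence) and, as you note, the Lipschitz inflation is exactly what rescues a first-moment argument that would otherwise leak a factor of $m^{d-1}$ in a naive union bound over the boundary. The trade-off is that it yields strictly less than the paper's intermediate step: the a.s.\ sublinearity \eqref{equation: slope of ergodic measure} is reused later in the paper (in the discussion of Section~\ref{section:open_questions}, to argue that $\Ze^d$-ergodic Gibbs measures have zero slope), so the authors would not want to discard it even though your shorter derivation suffices for Lemma~\ref{Lemma:flatness_gives_extreme_heights} itself.
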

In the following proof we will use a version of the Poincar\'e recurrence theorem: Given a stationary process $(X_n)$ and an event $A$, if $\P((X_n)\in A)>0$ then $\P((X_{n+k})\in A\text{ for infinitely many $k>0$}\,|\,(X_n)\in A)=1$.
  \begin{proof}
  The following limit exists almost surely, for each $v\in\Z^d$, by the ergodic theorem
    \begin{equation}\label{eq:ergodic_limit}
      \lim_{n\to \infty}\frac{f(n\, v)-f(\zero)}{n}=\lim_{n\to \infty}\frac{1}{n}\sum_{i=0}^{n-1}\left(f((i+1)\, v)-f(i\, v)\right).
    \end{equation}
    For the stationary process $\{f(n\, v)\colon n \in \Z\}$, if we have for some $k \in \Z$ the occurence of a positive probability event $f(\zero)=k$, then by the Poincar\'e recurrence theorem $f(n\, v)=k$ for infinitely many $n>0$ almost surely, conditioned on $f(\zero)=k$. As a consequence it follows that $f(n\, v)=f(\zero)$ for infinitely many $n$ and the limit in \eqref{eq:ergodic_limit} is necessarily $0$ almost surely.
These statements can be used to prove that, almost surely,
\begin{equation}
\lim_{n\to \infty}\sup_{|v|_1=n}\frac{|f(v)|}{n}=0.\label{equation: slope of ergodic measure}
\end{equation}
A detailed proof can be found in~\cite[Lemma 6.4]{MR3552299}. Its idea is as follows: the previous statements on the limit~\eqref{eq:ergodic_limit} extend to
\begin{equation}\label{eq:ergodic_limit2}
  \lim_{n\to \infty}\frac{f(\lfloor n\, v\rfloor)}{n} = 0,\quad v\in\R^d,
\end{equation}
where $\lfloor\cdot\rfloor$ indicates rounding down each coordinate to the next integer. For $v\in\Z^d$ this follows from the previous statements, and this case implies also the case that $v$ has rational coordinates, making use of the homomorphism property (the fact that $f$ changes by $1$ between adjacent vertices). A density argument and another use of the homomorphism property then yields the full statement~\eqref{eq:ergodic_limit2}. Now apply~\eqref{eq:ergodic_limit2} for a finite $\eps$-dense set in the sphere $\{v\in\R^d\colon\|v\|_1=1\}$ and use the homomorphism property to conclude that, almost surely, $\sup_{|v|_1=n}\frac{|f(v)|}{n}\le 2\eps$ for all $n$ larger than some random threshold. This implies~\eqref{equation: slope of ergodic measure} as $\eps$ is arbitary.

By \eqref{equation: slope of ergodic measure}, for all $\epsilon, \delta>0$, there exists a positive integer $M$ such that for all $m>M$,
\begin{equation}
\P(f_{\extB \Lambda(m)}\in (-m \epsilon, m \epsilon))>1-\delta.\label{equation: height less}
\end{equation}
From this \eqref{equation: minimal maximal height} follows.
  \end{proof}


In the following proof we will need some basic facts about the ergodic decomposition. The ergodic decomposition gives us that any  $\L$-translation-invariant measure is a mixture of $\L$-ergodic measures. By a standard result \cite[Theorem 14.15]{MR2807681},  the ergodic decomposition of an $\L$-translation-invariant Gibbs measure consists of $\L$-ergodic Gibbs measures.

\begin{proof}[Proof of Theorem \ref{thm:at_most_one_infinite_cluster}]
As stated above, by \cite[Theorem 14.15]{MR2807681}, it is sufficient to prove the result for $\L$-ergodic Gibbs measures for homomorphism pairs.

Suppose that there are infinitely many infinite connected components of $\{f>g\}$. Choose $M$ large enough such that
$$\P(\text{at least three infinite connected components of ${f>g}$ intersect $\Lambda(M)$})>\frac{3}{4}.$$
By \eqref{equation: minimal maximal height} choose $m>M$ such that
\begin{eqnarray*}
\P((f_{\mathrm{max}, \extB \Lambda(m)})_{\Lambda(M)}> \frac{m}{2})&>& \frac{3}{4}\\
\P((g_{\mathrm{min}, \extB \Lambda(m)})_{\Lambda(M)}< -\frac{m}{2})&>& \frac{3}{4}.
\end{eqnarray*}
Putting these together we have that
\begin{eqnarray*}
\P(\text{at least three infinite connected components of ${f>g}$ intersect $\Lambda(M)$},&&\\
(f_{\mathrm{max}, \extB \Lambda(m)})_{\Lambda(M)}> \frac{m}{2})\text{ and } ((g_{\mathrm{min}, \extB \Lambda(m)})_{\Lambda(M)}< -\frac{m}{2}))&>& \frac14.
\end{eqnarray*}
Suppose $(f, g)$ is a homomorphism pair such that at least three infinite connected components $\mathcal C_1, \mathcal C_2, \mathcal C_3$ of ${f>g}$ intersect $\Lambda(M)$,
$((f_{\mathrm{max}, \extB \Lambda(m)})_{\Lambda(M)}> \frac{m}{2})$ and  $((g_{\mathrm{min}, \extB \Lambda(m)})_{\Lambda(M)}< -\frac{m}{2}).$ Then there exists a homomorphism pair $(\tilde f, \tilde g)$ such that $\tilde f>f$, $\tilde g<g$,
$$(\tilde f, \tilde g)_{\Z^d\setminus \Lambda(m)}= ( f, g)_{\Z^d\setminus \Lambda(m)}$$
and $\mathcal C_1\cup \mathcal C_2 \cup \mathcal C_3\cup \Lambda(M)$ is contained in an infinite connected component of $\tilde f> \tilde g$. In this case, $\Lambda(m)$ is a trifurcation ball for $\tilde f> \tilde g$.

Since $(f,g)$ are sampled from a Gibbs measure for homomorphism pairs it follows that
\begin{equation*}
\P(\Lambda(m) \text{ is a trifurcation ball})>0.
\end{equation*}
By Lemma~\ref{Lemma: no trifurcation}, we have a contradiction.

Now suppose that $\{f>g\}$ has more than one (but finitely many) connected components. Choose $M$ such that
$$\P(\text{all the infinite connected components of $\{f>g\}$ intersect $\Lambda(M)$})>\frac{3}{4}.$$
By the argument above we get that
$$\P(\{f>g\}\text{ has a unique connected component})>0.$$
But the number of connected components is constant almost surely. Hence there is at most one infinite connected component almost surely.
\end{proof}


%

    \subsection{No coexistence of primal and dual percolation with unique infinite cluster}\label{sec:no_coexistence}

    In the classical theory of percolation, it was shown by Harris \cite{MR0115221} that in bond percolation on $\Z^2$ there is no infinite cluster at $p=1/2$, almost surely. Indeed, if the infinite cluster had positive probability to appear, then it would appear almost surely, and by symmetry, also the dual percolation would have an infinite cluster almost surely. However, in a sense, planar geometry places too many constraints on how the two clusters should co-exist for this to be possible. This argument has been generalized considerably in order to weaken the required assumptions. The version required here is based on an idea of Zhang.

    \begin{thm}\cite[Theorem 14.3]{MR2280297}\label{thm: no coexistence}
   For a full-rank sublattice $\L$, if $\mu$ is a probability measure on $\{0,1\}^{\Z^2}$ satisfying
    \begin{enumerate}
    \item
(Ergodicity) $\mu$ is $\L$-ergodic.
\item
(Positive association) For any two increasing events $A,B$, $\P(A\cap B)\ge \P(A)\P(B)$.
\item (Symmetry) Invariance under interchange of coordinates and reflection in coordinate hyperplanes.
\item (Unique infinite cluster) There is at most one infinite connected component of $0$'s and at most one infinite connected component of $1$'s.
\end{enumerate}
Then the probability that there is simultaneous existence of an infinite connected component of $0$'s and an infinite connected component of $1$'s is zero.
    \end{thm}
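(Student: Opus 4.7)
The argument is Zhang's classical one for planar percolation, made quite short by the symmetry hypothesis.

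\emph{Ergodic reduction.} Suppose for contradiction that with positive probability an infinite cluster of $0$'s and an infinite cluster of $1$'s coexist. Each of these existence events is $\L$-invariant, so ergodicity forces each to have probability $1$; combined with the uniqueness assumption, there is almost surely a unique infinite cluster $\mathcal{C}^1$ of $1$'s and a unique infinite cluster $\mathcal{C}^0$ of $0$'s.

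\emph{Arm events and the square-root trick.} For each side $i \in \{N,S,E,W\}$ of $\Lambda(L)$, let $A^1_i(L)$ denote the event that some $1$-vertex on the $i$-th face of $\extB\Lambda(L)$ is joined to infinity by a $1$-path contained in $\Z^2 \setminus \Lambda(L)$; define $A^0_i(L)$ analogously. The symmetry hypothesis renders $\P(A^1_i(L))$ (and $\P(A^0_i(L))$) independent of $i$. Since $\mathcal{C}^1$ is almost surely infinite and meets $\Lambda(L)$ for all large $L$, and an infinite connected set meeting a finite box must have an infinite component exiting the box through its boundary, $\P(\bigcup_i A^1_i(L)) \to 1$. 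The complements $\overline{A^1_i(L)}$ are decreasing, so positive association yields
\[ (1 - \P(A^1_i(L)))^4 \,\le\, \P\!\Bigl(\bigcap_i \overline{A^1_i(L)}\Bigr) \,=\, 1 - \P\!\Bigl(\bigcup_i A^1_i(L)\Bigr) \,\longrightarrow\, 0, \]
so $\P(A^1_i(L)) \to 1$; the same conclusion holds for $A^0_i(L)$.

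\emph{Four-arm event and topological contradiction.} Pick $L$ large enough that, by a union bound on complements, $\P(A^1_N(L) \cap A^1_S(L) \cap A^0_E(L) \cap A^0_W(L)) > 0$. On this event there are disjoint infinite $1$-rays $R_N, R_S$ outside $\Lambda(L)$ from the N and S sides, and disjoint infinite $0$-rays $R_E, R_W$ from the E and W sides. Uniqueness of $\mathcal{C}^1$ lets us join $R_N$ and $R_S$ by a $1$-path in $\Z^2$; extracting a self-avoiding subpath produces a simple bi-infinite $1$-curve $\gamma_1 \subset \R^2$ whose two ends escape to infinity along $R_N$ and $R_S$. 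Symmetrically, uniqueness of $\mathcal{C}^0$ produces a simple bi-infinite $0$-curve $\gamma_0$ with ends along $R_E$ and $R_W$. Compactify to $S^2 = \R^2 \cup \{\infty\}$: each $\gamma_j \cup \{\infty\}$ is a Jordan curve, and at $\infty$ the four tangent directions appear in the alternating cyclic order N, E, S, W. Hence $\gamma_1 \cup \{\infty\}$ separates $S^2$ into two open disks whose closures contain the E and W tangent directions at $\infty$ respectively, and connectedness of $\gamma_0 \setminus \{\infty\}$ forces $\gamma_0$ to cross $\gamma_1$ somewhere in $\R^2$. But two lattice paths that cross must share a vertex, which would then be simultaneously a $0$- and a $1$-vertex, a contradiction.

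\emph{Main obstacle.} The principal subtlety is the final planar topology step: formally extracting simple bi-infinite curves from the two infinite clusters and verifying that their ends at infinity interleave in the required cyclic order, so that the Jordan curve theorem can be invoked to force a crossing. The remaining ingredients — ergodic reduction, the FKG square-root bound, and the union bound — are routine, and the rotation/reflection symmetry is exactly what makes this topological step clean (without it one is forced into the more elaborate arguments alluded to in the introduction).
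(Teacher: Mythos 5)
The paper does not actually prove this statement: it cites \cite[Theorem~14.3]{MR2280297} and adds a remark that the proof there carries over once the ``finite energy'' hypothesis is replaced by the explicit uniqueness hypothesis (4). So your blind attempt is being compared to a citation, not to a proof in the paper; the relevant question is simply whether your argument is correct.

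Your ergodic reduction, the definition of the one-arm events $A^\kappa_i(L)$, the FKG square-root bound $\bigl(1-\P(A^1_i(L))\bigr)^4 \le \P\bigl(\bigcap_i \overline{A^1_i(L)}\bigr)\to 0$, and the union bound producing a four-arm event of positive probability are all correct and are exactly the standard opening of Zhang's argument.

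The gap is in the final topological step, and it is not a matter of missing low-level detail; the assertion you rely on is false as stated. You claim that the two ends of $\gamma_1$ ``escape along $R_N$ and $R_S$'' and therefore have cyclically interleaved tangent directions at $\infty$ with the ends of $\gamma_0$, whence the compactified Jordan curves must cross. But a ray that exits $\Lambda(L)$ through its north side is free to wind around the box and escape to infinity in any direction whatsoever; there is no ``tangent direction at $\infty$'' and no reason the four escape directions alternate. In fact the purely topological statement you are invoking --- that a simple bi-infinite curve through $p_N,p_S$ must separate $p_E$ from $p_W$ whenever $p_N,p_E,p_S,p_W$ lie in cyclic order on a circle --- fails: one can draw $\gamma_1$ so that it touches the circle only at $p_N$ and $p_S$ and otherwise stays entirely on the $E$ side (looping around the box), and then $p_E$ and $p_W$ land in the \emph{same} complementary component of $\gamma_1\cup\{\infty\}$, so $\gamma_0$ need not cross $\gamma_1$.

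What rescues Zhang's argument is a piece of percolation information your proof never uses: the $1$-path joining $R_N$ to $R_S$ cannot stay in $\Z^2\setminus\Lambda(L)$, because a path around the $E$ or $W$ side of the box in the exterior must meet $R_E$ or $R_W$, which are $0$-coloured. Hence $\gamma_1$ is forced to enter $\Lambda(L)$, and the separation argument should be run at the boundary $\partial\Lambda(L)$ (where $\gamma_1$ crosses transversally) rather than ``at infinity.'' As written, your step~4 asserts a false topological lemma and omits the argument that forces the connecting path inside the box, so the proof is not complete. The earlier steps stand, and the overall strategy (Zhang's argument, as in the cited reference) is the right one.
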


Strictly speaking, \cite[Theorem 14.3]{MR2280297} is for measures satisfying assumption (1) with $\L=\Z^2$, (2), (3) and an additional ``finite energy'' assumption which means roughly that every pattern appears with positive probability. However the proof given there carries forward to $\L$-ergodic probability measures as well and the only place where the ``finite energy'' assumption is used is to prove assumption (4) above.

A further generalization which removes the symmetry assumption is proved in (\cite[Theorem 9.3.1 and Corollary 9.4.6]{{MR2251117}}) and also in \cite[Theorem 1.5]{DCRT2017}.

\subsection{$\mathcal L$-invariant Gibbs measures are preserved under swapping of infinite clusters}\label{subsection: infinite clusters}
In Section \ref{sec:disagreement_percolation} we showed that the operation of swapping finite clusters of disagreement of pairs of homomorphism height functions sampled from a Gibbs measure preserves the Gibbs property. In this section we will prove the same holds when we swap infinite clusters instead. 

   \begin{lemma}\label{lem:infinite_cluster_swap}
   Let $f,g$ be independent samples from two $\L$-ergodic Gibbs measures. Denote
	\begin{equation}\label{eq:C_f_greater_g_def}
	C = \{v\colon \text{$v$ belongs to an infinite connected component of $\{f>g\}$}\}.
	\end{equation}
	Define a new pair $(\tilde{f}, \tilde{g})$ of homomorphism height functions on $\Z^d$ by
	\begin{equation}\label{eq:swapping_on_C}
	(\tilde{f}(v), \tilde{g}(v)) := \begin{cases}
	(f(v), g(v))&v\notin C,\\
	(g(v), f(v))&v\in C.
	\end{cases}
	\end{equation}
	Then the distribution of $(\tilde{f}, \tilde{g})$ is a $\mathcal L$-invariant Gibbs measure for homomorphism pairs.
\end{lemma}

 Unlike with finite cluster swaps, this lemma is no longer true without the assumption that the Gibbs measure is $\mathcal L$-invariant: Denote the coordinates of $v\in \Z^d$ by $(v_1, v_2,\ldots,v_d)$.
Consider homomorphism pairs $(f, g)$ given by 
$$f(v):=v_1+v_2+\ldots +v_d\text{ and }g(v):=-f(v)\text{ for all }v\in \Z^d.$$ 
The delta mass on $(f(v), g(v))$ forms a Gibbs measure because these configurations are ``frozen'' meaning that if $(f', g')$ is a homomorphism pair which agrees with $(f, g)$ on all but finitely many sites then it must be equal to $(f, g)$. On the other hand, after applying the cluster swaps we get the delta mass on the pair $(\tilde f, \tilde g)$ given by 
$$\tilde f(v):=|v_1+v_2+\ldots +v_d|\text{ and }\tilde g(v):=-|v_1+v_2+\ldots +v_d|\text{ for all }v\in \Z^d$$ 
which is clearly not a Gibbs measure because $\mu^{\Lambda(0), \tilde f_{\partial_0 \Lambda(0)}}$ has the uniform distribution on values $\{0,2\}$ at the origin.

The proof of this lemma will go via a variational principle where $\mathcal L$-invariant Gibbs measures are identified precisely as the measures of maximal entropy for this model.

For this we begin by defining entropy as is required in our case. Given a random variable $X$ taking values in a finite set $X$, the Shannon entropy of $X$ is defined as 
$$H(X):=\sum_{a\in A}-\mathbb P (X=a)\log(\mathbb P(x=a)).$$
We record two basic facts about Shannon entropy. The Shannon entropy for random variables taking values in a fixed finite state space $S$ is maximised by the uniform distribution on $S$. This follows from a straightforward application of the Jensen's inequality. Also if $X$ and $Y$ are independent, a straightforward computation shows that
$$H(X, Y)=H(X)+H(Y).$$
The entropy of a random field is given by the growth rate of Shannon entropy on the field restricted to finite regions. For this, given a function $f: \Z^d\to S$ where $S$ is $\Z$ or $\Z^2$ we write $(f\wedge M)$ to denote the truncated height function 
$$(f\wedge M)(v):=\begin{cases}f(v)&\text{ if  }\|f(v)\|_1<M\\
M&\text{otherwise}\end{cases}$$
where $\wedge$ denotes the minimum. Now if $f$ is a sample from a $\mathcal L $-invariant random field we define 
$$h(f):=\lim_{M\to \infty}\lim_{N\to \infty}\frac{1}{\Lambda(N)}\lim H(f\wedge M)_{\Lambda(N)}.$$
The limit with respect to $N$ exists by subadditivity arguments and the limit with respect to $M$ exists because $H(f\wedge M)_{\Lambda(N)}$ is monotonically increasing in $M$. Details can be found in \cite[Chapter 3]{keller1998equilibrium}. 
It follows automatically that if $(f, g)$ are independent $\mathcal L$-invariant random fields then
\begin{equation}
h(f, g)=h(f)+h(g).\label{equation:entropy add}
\end{equation}

We recall now that entropy is preserved under countable-to-one maps.
\begin{prop}\label{prop: countable to one maps}
	Let $f$ be sampled from a $\mathcal L$-invariant random field $\mu$ with state space $S$ and $\phi$ be an equivariant map on the support of $\mu$ which is countable to one $\mu$ almost everywhere. Then $h(f)=h(\phi(f))$.
	\end{prop}
The proof of this classical fact can be found in \cite[Theorem 4.1.15]{downarowicz2011entropy} for $\Z$-indexed invariant random fields but can be easily adapted to our setting with standard changes. However wherever we will use this proposition we will sketch some alternative arguments. This proposition implies that the entropy of $\mathcal L$-invariant measures on homomorphism height functions is bounded. Indeed the map $f\to f\!\!\!\mod3$ which maps homomorphism height functions surjectively to proper $3$-colorings of $\Z^d$ is countable to one: Given $f\!\!\!\mod3$, $f$ is completely determined up to a choice of $f(0)$. Hence the entropy of any $\mathcal L$-invariant measure on homomorphism height functions is bounded by the maximal entropy of $\mathcal L $-invariant measures on the space of proper $3$-colorings. Here is an alternative way to prove that $h(f)=h(f\!\!\!\mod 3)$: It is easy to see that
$h(f\!\!\!\mod 3)\leq h(f)$. For proving the opposite inequality notice that $f_{\Lambda(n)}$ is completely determined by $f(0)$ and $(f\!\!\!\mod 3)_{\Lambda_n}$. 
Thus
$$H((f\wedge M)_{\Lambda(n)})\leq H((f\!\!\!\mod 3)_{\Lambda(n)})+H(f(0)\wedge (M+n))\leq H((f\!\!\!\mod 3)_{\Lambda(n)}) +\log(2(M+n)).$$

From here it follows by dividing by $|\Lambda(n)|$ and taking the respective limits shows that $h(f\wedge M)\leq h(f\!\!\!\mod 3)$. Finally by taking $M\to \infty$ we get that $h(f)\leq h(f\!\!\!\mod 3)$ This completes the proof.

We will need another important property of these maps. If $f$ is sampled from a Gibbs measure on homomorphism height functions then $f\!\!\!\mod3$ is sampled from Gibbs measure on proper $3$-colorings. This follows because given any finite connected set $\Lambda$ with a connected complement the map $\!\!\!\mod3$  is surjective and bijective from the support of $\mu^{\Lambda, {f}_{\extB \Lambda}}$ on homomorphism height functions to $\mu^{\Lambda, {(f\!\!\!\mod 3)}_{\extB \Lambda}}$ on proper $3$ colorings.

We begin by proving that the Gibbs measures on homomorphism height functions have maximal entropy.
\begin{lemma} \label{lemma: Gibbs has maximal entropy}
$\mathcal L$-invariant Gibbs measures have the maximal entropy among all $\mathcal L$-invariant measures on homomorphism height functions.
	\end{lemma}

\begin{proof}
Fix $\epsilon>0$. Let $f$ be sampled from an $\mathcal L$-invariant measure on the homomorphism height functions. By \eqref{equation: height less}, there exists a positive integer $M$ such that for all $m>M$,
\begin{equation}
\P(f_{\extB \Lambda(m)}\in (-m \epsilon, m \epsilon))>1-\epsilon.
\end{equation}
Let $f$ be a homomorphism height function such that $f_{\extB \Lambda(m)}\in (-m \epsilon, m \epsilon)$. Let $n$ be the largest even integer such that $n<m(1-\epsilon)$ and $g$ be a homomorphism height function such that 
$$g(v)=0 \text{ for all }\|v\|_1=n.$$ 
Then we have that $|f(w)-g(w')|<\|w-w'\|_1$ for all $w\in \Z^d\setminus\Lambda(m)$ and $w'\in \Lambda(n)$.
It follows that there exists a homomorphism height function $k$ such that 
$$k_{\Lambda(n)}= f_{\Lambda(n)}\text{ and }k_{\Z^d\setminus \Lambda(m)}=f_{\Z^d\setminus \Lambda(m)}.$$
For this consider the function
$$k(v):=\min_{w\in \Z^d\setminus \Lambda(m),w'\in \Lambda(n)}\{f(w)+\|w-v\|_1, g(w')+\|w'-v\|_1\} \text{ for }v\in \Z^d.$$
It is easy to check that this defines the required homomorphism height function. 

This implies that 
$$H(f_{\Lambda(m)}\!\!\!\mod 3)\geq (1-\epsilon) H(\mu^{\Lambda(n-1), 0}).$$
Let $h$ be the maximal entropy achieved by $\mathcal L$-invariant measures on proper $3$-colorings. By \cite[Proposition 2.1]{benjamini2000random} it follows that
$$\lim_{n\to \infty}\frac{1}{|\Lambda(n)|}H(\mu^{\Lambda(n-1), 0})=h$$
Thus 
$$h(f)=h(f\!\!\!\mod3)>\lim_{m\to \infty}\frac{|\Lambda(\lfloor(1-\epsilon)m\rfloor-2)|}{|\Lambda(m)|}(1-\epsilon)h> (1-2\epsilon)^{d+1}h.$$ 

 Since $\epsilon$ was arbitrary the proof is complete.
	\end{proof}
\begin{remark}
	The fact that  the common extension of $f_{\Z^d\setminus \Lambda(m)}$ and $g_{\Lambda(n)}$ to a homomorphism height function exists follows more directly from the Kirszbraun theorem \cite{chandgotia2017kirszbraun,MT2017}.
\end{remark}

\begin{lemma}\label{lemma: cluster swap no reduce}
	Let $(f, g)$ and $(\tilde f, \tilde g)$ be as in Lemma \ref{lem:infinite_cluster_swap} then 
	$h(f, g)=h(\tilde f, \tilde g)$.
	\end{lemma}
\begin{proof}
	By Theorem \ref{thm:at_most_one_infinite_cluster}, with probability one both $\{f>g\}$ and $\{f<g\}$ have at most one infinite connected component each. Thus the map $(f, g)\to (\tilde f, \tilde g)$ is at most $2$ to $1$. By Proposition \ref{prop: countable to one maps}, the entropy must be preserved.
	\end{proof}
\begin{remark}
There is a more direct proof which does use neither $\{f>g\}$ and $\{f<g\}$ have at most one infinite connected component each nor Proposition \ref{prop: countable to one maps}. For this just use the fact that $(f,g)_{\Lambda(n)}$ is a function of $(f, g)_{\partial_0\Lambda(n)}$ and $(\tilde f, \tilde g)_{\Lambda(n)}$ and follow it with some simple direct computations.
\end{remark}
The final component of the proof that we need is the Lanford-Ruelle theorem \cite{lanford1969observables}. We do not need the fact at this level of generality and a much simpler proof by Burton and Steif suffices for our setting \cite[Proposition 1.19]{burton1994non}.
\begin{lemma}\label{lemma:maximal entropy is Gibbs}
	If a $\mathcal L$-invariant measure on homomorphism pairs has maximal entropy then it is a Gibbs measure.
	\end{lemma}
While the proofs in \cite{lanford1969observables,burton1994non} are for a finite state space it also extends to our setting since the entropy of measures on homomorphism pairs is bounded. Here is a sketch of the proof in this case. Suppose $\mu$ is a $\mathcal L$-invariant measure on homomorphism pairs. If it is not a Gibbs measures, there exists $m$ such that $(f, g)_{\Lambda(m)}$ conditioned on $(f, g)_{\partial_0\Lambda(m)}$ is not uniformly distributed. However since the uniform measure is the unique maximiser of Shannon entropy, the entropy of $(f, g)_{\Lambda(m)}$ conditioned on $(f, g)_{\partial_0\Lambda(m)}$ increases when it is replaced by a sample from $\mu^{\Lambda(m), (f, g)_{\partial_0\Lambda(m)}}$. By the ergodic theorem, we can find disjoint appearances of the pattern $(f, g)_{\partial_0\Lambda(m)}$ on a positive density of positions in $\Z^d$. Since the entropy can be improved on all of these positions by the same amount, we get an average increase in entropy per site. Hence $\mu$ could not have been the measure of maximal entropy to start with.

\begin{proof}[Proof of Lemma \ref{lem:infinite_cluster_swap}]
We know from Lemma \ref{lemma: Gibbs has maximal entropy} that $f$ and $g$ have maximal entropy among $\mathcal L$-invariant measures on homomorphism height functions. Since $f$ and $g$ are independent we have from \eqref{equation:entropy add} that $(f,g)$ has maximal entropy among $\mathcal L$-invariant measures on homomorphism pairs. By Lemma \ref{lemma: cluster swap no reduce} we have that $(\tilde f,\tilde g)$ has maximal entropy among $\mathcal L$-invariant measures on homomorphism pairs. Finally from Lemma \ref{lemma:maximal entropy is Gibbs} it follows that $(\tilde f,\tilde g)$ is also an $\mathcal L$-invariant Gibbs measure.
\end{proof}

    \section{Ergodic Gibbs measures are extremal}\label{section:ergodic_Gibbs_extremal}
The main goal of this section is to prove Theorem~\ref{thm:ergodic_Gibbs_measures_are_extremal} and its analogue for $d=2$.

Fix $d\ge 2$. Suppose $\mu$ is an $\L$-ergodic Gibbs measure for a full-rank sublattice $\L$. We  will first show that $\mu$ is extremal. Let $f,g$ be independently sampled from two $\L$-ergodic Gibbs measures $\mu_1$ and $\mu_2$. Define the events
    \begin{align*}
      E^+&:=\{\text{the set $\{f>g\}$ contains an infinite connected component}\},\\
      E^-&:=\{\text{the set $\{f<g\}$ contains an infinite connected component}\}.
    \end{align*}
\begin{lemma}\label{lem:disjointness of oppositve events for f>g and f<g}
If $f,g$ are independent samples from an $\L$-ergodic Gibbs measure on homomorphism height functions then
\begin{equation}\label{eq:no_infinite_clusters_for_extremality}
\P(E^+) = \P(E^-) = 0.
\end{equation}
\end{lemma}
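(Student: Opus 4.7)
The plan: set $\nu := \mu \otimes \mu$, which is a $\L$-translation-invariant Gibbs measure for homomorphism pairs and is invariant under the swap $(f,g) \mapsto (g,f)$, so already $\P_\nu(E^+)=\P_\nu(E^-)$. I would pass to the $\L$-ergodic decomposition $\nu = \int \nu_\omega \, dm(\omega)$. Each $\nu_\omega$ is a $\L$-ergodic Gibbs measure for pairs, so each marginal $\pi_i \nu_\omega$ is a $\L$-ergodic Gibbs measure (using that $\pi_i$ is $\L$-equivariant). As $\mu = \pi_i \nu = \int \pi_i \nu_\omega \, dm$ and $\mu$ is itself $\L$-ergodic, uniqueness of the ergodic decomposition of $\mu$ forces $\pi_1 \nu_\omega = \pi_2 \nu_\omega = \mu$ for $m$-almost every $\omega$. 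Since $\L$-ergodicity also gives $\nu_\omega(E^\pm) \in \{0,1\}$, it suffices to exclude the possibility that $\nu_\omega(E^+) = 1$ on a set of positive $m$-measure.

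The first step uses the infinite cluster swap. Applying Lemma~\ref{lem:infinite_cluster_swap} to $\{f>g\}$ inside $\nu_\omega$ produces a $\L$-translation-invariant Gibbs measure $\tilde\nu_\omega$ for pairs in which both the (swapped) infinite component of the original $\{f>g\}$ and the (unchanged) infinite component of the original $\{f<g\}$ sit inside $\{\tilde f < \tilde g\}$. Theorem~\ref{thm:at_most_one_infinite_cluster} applied to the swapped pair $(\tilde g, \tilde f)$ under $\tilde\nu_\omega$ then forbids two disjoint infinite components, giving $\nu_\omega(E^+ \cap E^-) = 0$. Thus if $\nu_\omega(E^+) = 1$, then $\nu_\omega(E^-) = 0$.

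The main step, and the hard part, is an equalization argument in this latter case. I would apply Corollary~\ref{cor:finite_cluster_equalizing} with $\varepsilon \equiv 0$ to $\nu_\omega$, producing $(\bar f, \bar g)$ with $\bar f \eqd f \sim \mu$ and $\bar g \eqd g \sim \mu$. Because no infinite component of $\{f<g\}$ exists $\nu_\omega$-almost surely, the construction forces $\bar f = \bar g$ on every finite component of $\{f \ne g\}$ and on $\{f=g\}$, while on the unique infinite component $C_+$ of $\{f>g\}$ one has $\bar f = f > g = \bar g$; hence $\bar f \ge \bar g$ pointwise, with strict inequality exactly on $C_+$. For each fixed vertex $v$, since $\bar f(v) \ge \bar g(v)$ and $\bar f(v) \eqd \bar g(v)$, for every integer $k$ the nested events $\{\bar g(v) \ge k\} \subset \{\bar f(v) \ge k\}$ have equal probability, so $\P_{\nu_\omega}(\bar f(v) \ge k > \bar g(v)) = 0$; taking a countable union over $k$ yields $\P_{\nu_\omega}(v \in C_+) = 0$, and a further countable union over $v \in \Z^d$ gives $\nu_\omega(C_+ \neq \emptyset) = 0$, contradicting $\nu_\omega(E^+) = 1$. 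The delicate point in this last step is to deduce an almost-sure pointwise equality from stochastic ordering together with equality of one-site marginal laws, without any integrability assumption on $\mu$; the cumulative-distribution-function comparison above accomplishes this cleanly.
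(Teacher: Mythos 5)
Your proof is correct, and it takes a genuinely different route from the paper's. Where you diverge: (i) you pass to the $\L$-ergodic decomposition $\nu=\int\nu_\omega\,dm$ of $\mu\otimes\mu$, reduce to a single $\L$-ergodic Gibbs measure for pairs $\nu_\omega$ with both marginals equal to $\mu$ (the projection of an $\L$-ergodic measure on the pair space is $\L$-ergodic, so the forced equality of the marginals with $\mu$ follows from uniqueness of the ergodic decomposition of $\mu$), and you rerun the cluster-swap argument of Lemma~\ref{lem:no_infinite_clusters_for_f>g_and_f<g} directly under $\nu_\omega$ since $\nu_\omega$ need not be a product; (ii) your final contradiction is a pointwise CDF comparison: $\bar f(v)\ge\bar g(v)$ together with $\bar f(v)\eqd\bar g(v)$ forces $\bar f(v)=\bar g(v)$ a.s., with no integrability needed, and a countable union over $v$ kills the infinite cluster. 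The paper instead works directly with $\mu\otimes\mu$, invokes the pointwise ergodic theorem to get positive density for the infinite cluster on $E^+$ and to get a.s.\ equal empirical value statistics for $\bar f$ and $\bar g$ (both distributed as $\mu$), and derives the contradiction from the incompatibility of $\bar f\ge\bar g$, equal densities, and positive-density strict inequality. Your vertexwise stochastic-ordering argument is arguably cleaner and avoids any ergodic-theorem/density bookkeeping, at the price of invoking the ergodic decomposition for Gibbs measures on pairs (a standard extension of \cite[Theorem 14.15]{MR2807681}, which the paper uses for the single-height model but not explicitly for pairs).
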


By Corollary~\ref{cor:coupling_to_get_extremality} this will imply that $\mu$ is extremal. In the following lemma we show that $E^+$ and $E^-$ cannot occur simultaneously.
    \begin{lemma}\label{lem:no_infinite_clusters_for_f>g_and_f<g} If $f,g$ are independent samples from two $\L$-ergodic Gibbs measures $\mu_1$ and $\mu_2$ respectively then
      \begin{equation}\label{eq:no_infinite_clusters_for_f>g_and_f<g}
        \P(E^+\cap E^-) = 0.
      \end{equation}
    \end{lemma}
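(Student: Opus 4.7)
My plan is to argue by contradiction, using the infinite cluster swap (Lemma \ref{lem:infinite_cluster_swap}) combined with a parity separation argument and the Burton--Keane uniqueness result (Theorem \ref{thm:at_most_one_infinite_cluster}). Assume $\P(E^+\cap E^-)>0$. Since $f$ and $g$ are independent samples from $\L$-translation-invariant Gibbs measures, the product distribution of $(f,g)$ is an $\L$-translation-invariant Gibbs measure for homomorphism pairs, so Theorem \ref{thm:at_most_one_infinite_cluster} applies both to $(f,g)$ and to $(g,f)$. Hence, almost surely, $\{f>g\}$ has at most one infinite connected component (call it $C^+$ when it exists) and $\{f<g\}$ has at most one (call it $C^-$). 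On $E^+\cap E^-$, both $C^+$ and $C^-$ exist and are unique.

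Next I will perform the infinite cluster swap on $\{f>g\}$: define $(\tilde f,\tilde g)=(g,f)$ on $C^+$ and $(\tilde f,\tilde g)=(f,g)$ elsewhere. By Lemma \ref{lem:infinite_cluster_swap}, the pair $(\tilde f,\tilde g)$ is again a Gibbs measure for homomorphism pairs, and it remains $\L$-translation-invariant because the swap is defined in an $\L$-equivariant fashion. A direct inspection of the definition shows
\[
\{\tilde f<\tilde g\}=C^+\cup\{f<g\},\qquad \{\tilde f=\tilde g\}=\{f=g\}.
\]

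The heart of the argument is a parity-based separation step. Since $f$ and $g$ both satisfy the parity condition \eqref{eq:parity_constraint}, the function $f-g$ is everywhere even-valued. Moreover, as $f$ and $g$ are each Lipschitz with constant $1$, the difference $f-g$ changes by at most $2$ between adjacent vertices. Consequently, at any vertex $v\in C^+$ we have $f(v)-g(v)\ge 2$, so every neighbor $w$ of $v$ satisfies $f(w)-g(w)\ge 0$, meaning $w\notin\{f<g\}$. Thus $C^+$ has no neighbors in $\{f<g\}$, and the two sets $C^+$ and $\{f<g\}$ are separated in $\Z^d$ by the region $\{f=g\}$, on which $\tilde f=\tilde g$. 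It follows that $C^+$ and $\{f<g\}$ lie in \emph{different} connected components of $\{\tilde f<\tilde g\}$, giving at least two distinct infinite components (one containing $C^+$, another containing $C^-$) with positive probability.

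This contradicts Theorem \ref{thm:at_most_one_infinite_cluster} applied to the $\L$-translation-invariant Gibbs measure for homomorphism pairs $(\tilde g,\tilde f)$, which guarantees that $\{\tilde g>\tilde f\}=\{\tilde f<\tilde g\}$ has at most one infinite connected component almost surely. Hence $\P(E^+\cap E^-)=0$. The main obstacle, and the step I expect to require the most care in writing, is the parity/Lipschitz separation claim that $C^+$ cannot touch $\{f<g\}$; everything else follows mechanically from Lemmas already established in the Tools section.
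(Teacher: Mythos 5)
Your proof is correct and takes essentially the same route as the paper: swap on the infinite clusters of $\{f>g\}$, note that the swapped set is separated from $\{f<g\}$ because $f-g$ is even-valued and changes by at most $2$ across an edge (the same separation fact the paper records just before Claim~\ref{claim:sigma_algebra_equality} and invokes implicitly in ``the construction makes it clear''), and then apply Theorem~\ref{thm:at_most_one_infinite_cluster} to the swapped Gibbs pair. The only superfluous ingredient is your preliminary appeal to Theorem~\ref{thm:at_most_one_infinite_cluster} to single out a unique $C^+$ and $C^-$ before swapping: the swap of all infinite clusters of $\{f>g\}$ (as in Lemma~\ref{lem:infinite_cluster_swap}) together with the separation argument already produces two disjoint regions of $\{\tilde f<\tilde g\}$ each containing an infinite cluster, with no need for the preliminary uniqueness step.
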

    \begin{proof}
      Define a new pair of functions $\tilde{f}, \tilde{g}$ according to the recipe \eqref{eq:C_f_greater_g_def}, \eqref{eq:swapping_on_C} given in Lemma~\ref{lem:infinite_cluster_swap}. The construction makes it clear that the occurrence of $E^+\cap E^-$ implies the occurrence of the event
      \begin{equation*}
        E = \{\text{the set $\{\tilde{f}<\tilde{g}\}$ has two or more infinite connected components}\}
      \end{equation*}
      Thus it suffices to show that $\P(E)=0$. To this end, apply Lemma~\ref{lem:infinite_cluster_swap} to conclude that the distribution of $(\tilde{f}, \tilde{g})$ is a Gibbs measure for the homomorphism pairs. In addition, the construction implies that this Gibbs measure retains the $\L$-translation-invariance property of $(f,g)$. Thus we may apply Theorem~\ref{thm:at_most_one_infinite_cluster} (to the pair $(\tilde{g},\tilde{f})$) to conclude that $\P(E)=0$, as required.
    \end{proof}
    In the proof of Theorem~\ref{thm:ergodic_Gibbs_measures_are_extremal} we will use the fact that $\mu$ is $\L$-ergodic for a full-rank sublattice $\L$ in the following way: it would imply that there exists a sequence $(p_k)_{k\in\Z}$ so that
    \begin{equation}\label{eq:value_statistics_ergodicity}
      \lim_{L\to\infty} \frac{1}{|\Lambda(L)|}|\{v\in\Lambda(L)\colon f(v) = k\}| = p_k\quad\text{for each $k\in\Z$, almost surely}.
    \end{equation}
    Let us also define a function $I(f,g):\Z^d\to\{0,1\}$ by
    \begin{equation*}
      I(f,g)(v) = 1\text{ if and only if $v$ belongs to an infinite connected component of $\{f>g\}$}.
    \end{equation*}
\begin{proof}[Proof of Lemma \ref{lem:disjointness of oppositve events for f>g and f<g}]
The $\L$-translation-invariance of the product measure of $\mu$ with itself implies that
\begin{equation}\label{eq:positive_density_of_cluster}
\lim_{L\to\infty} \frac{1}{|\Lambda(L)|}\sum_{v\in\Lambda(L)}I(f,g)(v)>0,\quad\text{almost surely on the event $E^+$}.
\end{equation}
Now define a pair of homomorphism height functions $(\bar{f}, \bar{g})$ via the recipe~\eqref{eq:finite_cluster_equalizing_def} applied with $\eps\equiv 0$. Then
\begin{equation}\label{eq:bar_f_bar_g_properties}
\begin{split}
&\bar{f} \ge \bar{g}\quad\text{on the event $(E^-)^c$},\\
&I(f,g) = I(\bar{f},\bar{g}).
\end{split}
\end{equation}
In addition, Corollary~\ref{cor:finite_cluster_equalizing} shows that $\bar{f}\eqd f$ and $\bar{g} \eqd g$. In particular, referring to \eqref{eq:value_statistics_ergodicity},
\begin{equation}\label{eq:value_statistics_bar_f_bar_g}
\lim_{L\to\infty} \frac{1}{|\Lambda(L)|}|\{v\in\Lambda(L)\colon \bar{f}(v) = k\}| = \lim_{L\to\infty} \frac{1}{|\Lambda(L)|}|\{v\in\Lambda(L)\colon \bar{g}(v) = k\}|\quad\text{for each $k\in\Z$, almost surely}.
\end{equation}
Assume now, in order to obtain a contradiction, that (by symmetry) $\P(E^+) = \P(E^-) > 0$. Lemma~\ref{lem:no_infinite_clusters_for_f>g_and_f<g} then implies that $\P(E^+\setminus E^-)>0$. Thus, combining \eqref{eq:positive_density_of_cluster} with \eqref{eq:bar_f_bar_g_properties} we conclude that
\begin{equation*}
\bar{f} \ge \bar{g}\text{ and }\lim_{L\to\infty} \frac{1}{|\Lambda(L)|}|\{v\in \Lambda(L)\colon \bar{f}(v)>\bar{g}(v)\}|>0,\quad\text{almost surely on the event $E^+\setminus E^-$},
\end{equation*}
in contradiction with \eqref{eq:value_statistics_bar_f_bar_g}.
\end{proof}
  We conclude that~\eqref{eq:no_infinite_clusters_for_extremality} holds and by Corollary~\ref{cor:coupling_to_get_extremality} we have that $\mu$ is extremal.

    \begin{cor}\label{corollary:stochastic_domination}
Let $\mu_1$ and $\mu_2$ be $\L$-ergodic Gibbs measures for a full-rank sublattice $\L$. Either $\mu_1$ stochastically dominates $\mu_2$ or vice versa.
    \end{cor}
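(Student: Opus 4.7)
The plan is to reduce the dichotomy to a per-component application of Corollary~\ref{cor:stochastic domination} after passing to the ergodic decomposition of the product coupling $\mu_1\otimes\mu_2$. First, sample $f\sim\mu_1$ and $g\sim\mu_2$ independently; by the first of the two bullet points of Section~\ref{section:cluster_swap}, the joint law $\mu_1\otimes\mu_2$ is an $\L$-translation-invariant Gibbs measure for homomorphism pairs.

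Next I would decompose $\mu_1\otimes\mu_2=\int\nu_\omega\,d\omega$ into its $\L$-ergodic components. By the standard fact \cite[Theorem~14.15]{MR2807681} already used repeatedly in this section, each $\nu_\omega$ is an $\L$-ergodic Gibbs measure for homomorphism pairs. Let $\alpha_\omega$ be the marginal of $\nu_\omega$ on the first coordinate; then $\alpha_\omega$ is $\L$-translation-invariant and $\mu_1=\int\alpha_\omega\,d\omega$. Since $\mu_1$ is $\L$-ergodic, it is extremal in the simplex of $\L$-translation-invariant probability measures on homomorphism height functions, so the uniqueness of the ergodic decomposition at an extreme point forces $\alpha_\omega=\mu_1$ for $d\omega$-a.e.\ $\omega$; the same reasoning gives that the second marginal of $\nu_\omega$ equals $\mu_2$ almost surely.

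Now fix any such $\omega$. The events $E^+$ and $E^-$ defined just before Lemma~\ref{lem:disjointness of oppositve events for f>g and f<g} are $\L$-invariant, so each has $\nu_\omega$-probability $0$ or $1$. The proof of Lemma~\ref{lem:no_infinite_clusters_for_f>g_and_f<g} in fact uses only that $(f,g)$ is sampled from an $\L$-translation-invariant Gibbs measure for homomorphism pairs (independence of $f$ and $g$ is never invoked), so it applies to $\nu_\omega$ and yields $\nu_\omega(E^+\cap E^-)=0$. Therefore at least one of $\nu_\omega(E^+)$, $\nu_\omega(E^-)$ is zero, and applying Corollary~\ref{cor:stochastic domination} under $\nu_\omega$ (whose marginals are $\mu_1$ and $\mu_2$) gives $\mu_1\succeq\mu_2$ in the first case and $\mu_2\succeq\mu_1$ in the second, which establishes the dichotomy.

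The main obstacle is the marginal identification in paragraph two: it is crucial that $\L$-ergodicity of $\mu_1$ and $\mu_2$ be interpreted as extremality in the class of $\L$-translation-invariant measures, so that the ergodic decomposition of the product is in fact supported on couplings of $\mu_1$ and $\mu_2$ themselves. A secondary point, easy but worth checking, is that both Lemma~\ref{lem:no_infinite_clusters_for_f>g_and_f<g} and Corollary~\ref{cor:stochastic domination} are stated in enough generality to survive the passage to a (no longer independent) ergodic component, since only the $\L$-translation-invariance and Gibbs-pair property of $\nu_\omega$ are needed.
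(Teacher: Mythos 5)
Your argument is correct, but it takes a somewhat different route from the paper's. The paper exploits the extremality of $\mu_1$ and $\mu_2$ (already established earlier in Section~\ref{section:ergodic_Gibbs_extremal} via Lemma~\ref{lem:disjointness of oppositve events for f>g and f<g} and Corollary~\ref{cor:coupling_to_get_extremality}): since extremal Gibbs measures have trivial tail and an independent product of two extremal Gibbs measures is again extremal as a Gibbs measure for pairs, the tail event $E^+$ under $\mu_1\otimes\mu_2$ has probability $0$ or $1$, and the paper then invokes Lemma~\ref{lem:no_infinite_clusters_for_f>g_and_f<g} and Corollary~\ref{cor:stochastic domination} exactly as you do. You instead work with the $\L$-ergodic decomposition of $\mu_1\otimes\mu_2$ and derive the $0$--$1$ law for $E^\pm$ component-by-component, then recover the original marginals via the fact that $\L$-ergodic measures are extreme in the simplex of $\L$-translation-invariant measures; your checks that Lemma~\ref{lem:no_infinite_clusters_for_f>g_and_f<g} and Corollary~\ref{cor:stochastic domination} never use independence are accurate and are the right things to verify. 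The two routes reach the same place and apply the same two key lemmas; the paper's is shorter because it builds on the extremality result of the preceding paragraph, whereas yours uses only the $\L$-ergodicity hypothesis directly and bypasses the trivial-tail step, at the cost of a short marginal-identification argument. One minor point worth noting: different ergodic components $\nu_\omega$ could a priori yield the domination in opposite directions, but this does not break the dichotomy --- it would simply force $\mu_1 = \mu_2$, which is a special case of either alternative.
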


 \begin{proof}
 Let $(f,g)$ be sampled from the independent product of $\mu_1$ and $\mu_2$. By extremality of $\mu_1$ and $\mu_2$ we have that the probability of $\{f>g\}$ having an infinite connected component is either $0$ or $1$.  If it is $1$ then by Lemma~\ref{lem:no_infinite_clusters_for_f>g_and_f<g} we have that, almost surely, all components of $\{f < g\}$ are finite. Thus the corollary follows from Corollary~\ref{cor:stochastic domination}.
 \end{proof}

We are left to prove that if $\mu$ is an $\L$-ergodic Gibbs measures then it is in fact $\Ze^d$-invariant.

 \begin{cor}\label{cor: invariance under permutation of coordinates and reflections}
For a sublattice $\mathcal{L}$ of full rank, the $\L$-ergodic Gibbs measures are invariant under translations by $\Ze^d$, permutation of coordinates and reflections in coordinate hyperplanes.
\end{cor}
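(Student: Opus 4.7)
The plan is to leverage Corollary~\ref{corollary:stochastic_domination} together with a cyclic iteration: every symmetry under consideration eventually returns $\mu$ to itself---after enough iterations either because the symmetry has finite order, or because a multiple lands in the sublattice $\L$---so any chain of stochastic inequalities closes into a cycle and collapses to equalities.

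First I would handle $\Ze^d$-invariance. Fix $v\in\Ze^d$ and consider the pushforward $T_{v*}\mu$ under translation by $v$. Since $v$ has even $\|\cdot\|_1$-parity, $T_v$ preserves the parity condition~\eqref{eq:parity_constraint}; and since $\Z^d$ is abelian and $T_v$ commutes with every translation by an element of $\L$, the measure $T_{v*}\mu$ is again an $\L$-ergodic Gibbs measure. By Corollary~\ref{corollary:stochastic_domination} one of $\mu, T_{v*}\mu$ stochastically dominates the other; by symmetry assume $\mu\succeq T_{v*}\mu$. Because $T_v$ is an order-preserving measurable bijection on $\Z^{\Z^d}$, pushing forward this inequality by $T_v$ yields $T_{v*}\mu\succeq T_{(2v)*}\mu$, and iterating produces
\[
\mu\succeq T_{v*}\mu\succeq T_{(2v)*}\mu\succeq\cdots .
\]
Full-rank-ness of $\L$ forces $nv\in\L$ for some $n\ge 1$, hence $T_{(nv)*}\mu=\mu$; the chain is thus cyclic and every inequality collapses to an equality, giving $\mu=T_{v*}\mu$. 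Since $\L\subset\Ze^d$, every $\Ze^d$-invariant event is $\L$-invariant, so $\mu$ is also $\Ze^d$-ergodic.

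For a permutation of coordinates or a reflection in a coordinate hyperplane $\sigma$, I would run the same scheme, now working with the full-rank sublattice $\Ze^d$. Such $\sigma$ preserve $\|\cdot\|_1$-parity and map homomorphism height functions to homomorphism height functions, hence $\sigma_*\mu$ is again a Gibbs measure. As $\sigma$ normalizes the translation subgroup $\Ze^d$ (explicitly $\sigma T_w\sigma^{-1}=T_{\sigma(w)}$ with $\sigma(\Ze^d)=\Ze^d$), the pushforward $\sigma_*\mu$ is $\Ze^d$-ergodic since $\mu$ is. Corollary~\ref{corollary:stochastic_domination} then compares $\mu$ with $\sigma_*\mu$, and since $\sigma$ has finite order in the hyperoctahedral group, the same cyclic-chain argument forces $\mu=\sigma_*\mu$.

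The main technical subtlety I anticipate is verifying at each step that the hypotheses of Corollary~\ref{corollary:stochastic_domination} are satisfied---in particular that each pushforward $T_{v*}\mu$ or $\sigma_*\mu$ is genuinely ergodic for the same full-rank sublattice as $\mu$, and that stochastic domination is truly preserved when pushing the inequality forward by a symmetry. Both points reduce to the fact that translations, permutations, and coordinate-hyperplane reflections are order-preserving bijections of $\Z^{\Z^d}$ that normalize the relevant translation subgroup and carry the Gibbs specification to itself, so this verification should be routine once spelled out carefully.
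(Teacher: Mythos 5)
Your proposal is correct and matches the paper's argument in all essentials: compare $\mu$ with its pushforward under the symmetry via Corollary~\ref{corollary:stochastic_domination}, push the resulting stochastic inequality forward using that the symmetry is order-preserving on $\Z^{\Z^d}$, and close the chain by finite order (for permutations/reflections) or by $nv\in\L$ via full rank (for translations). The only cosmetic difference is that you run the argument in two stages (translations first to get $\Ze^d$-ergodicity, then the point-group symmetries with $\Ze^d$ as the base sublattice), whereas the paper treats all choices of $S$ uniformly in one pass.
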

  This will enable us to rely on Theorem \ref{thm: no coexistence} in the later parts of the paper.
\begin{proof}
Let $S$ be one of the following maps on the space of probability measures on $\Z^{\Z^d}$: translation by a vector in $\Ze^d$, reflection along coordinate hyperplanes or permutation of coordinates.
Given an $\L$-ergodic Gibbs measure $\mu$, $S(\mu)$ is also an $\L$-ergodic Gibbs measure. If $\mu$ and $S(\mu)$ are distinct, by Corollary~\ref{corollary:stochastic_domination} we have that either $\mu$ strictly stochastically dominates $S(\mu)$ or vice versa. Without loss of generality assume that $\mu$ strictly stochastically dominates $S(\mu)$. If $(f,g)$ is a sample from a coupling of $\mu$ and $S(\mu)$ such that $f\geq g$ then $S^n(f)\geq S^{n}(g)$ for all positive integers $n$. Since $(S^n(f),S^n(g))$ is a sample from a coupling of $S^n(\mu)$ and $S^{n+1}(\mu)$, it follows that $S^n(\mu)$ strictly stochastically dominates $S^{n+1}(\mu)$. Thus for all positive integers $n$, $\mu$ strictly stochastically dominates $S^n(\mu)$. Since there exists a positive integer $n$ such that $S^n(\mu)=\mu$ this leads to a contradiction and completes the proof.
\end{proof}

    \section{Uniqueness up to additive constant of ergodic Gibbs measures}
    In this section we prove Theorem~\ref{thm:uniqueness_up_to_additive_constant}.

    Fix the dimension $d=2$ throughout the section. Let $f,g$ be independent samples from $\Ze^2$-ergodic Gibbs measures $\mu, \mu'$. Our goal is to show that there is an integer $k_0$ such that the distribution of $f$ equals the distribution of $g + 2k_0$.

    Applying Theorem~\ref{thm:ergodic_Gibbs_measures_are_extremal} we have that $\mu$ and $\mu'$ are extremal Gibbs measures. A main consequence of extremality that we shall use is that
    \begin{equation*}
      \text{both $f$ and $g$ have positive association (for the pointwise partial order)},
    \end{equation*}
    according to Lemma~\ref{lem:FKG_for_pointwise_partial_order}. Since $(f,-g)$ have been sampled independently and the random fields of the form $1_{f>g+2k}$ are increasing functions of $(f, -g)$, they have positive association as well.

    For each integer $k$, define the events
    \begin{align*}
      E^+_k&:=\{\text{the set $\{f\ge g+2k\}$ contains an infinite connected component}\},\\
      E^-_k&:=\{\text{the set $\{f<g+2k\}$ contains an infinite connected component}\}.
    \end{align*}
    As another consequence of extremality, we note that the (joint) distribution of $(f,g)$ is an extremal $\Ze^2$-translation-invariant Gibbs measure for homomorphism pairs. Thus, for each integer $k$,
    \begin{equation}\label{eq:E_+_E_-_zero_one}
      \P(E^+_k),\, \P(E^-_k)\in \{0,1\}.
    \end{equation}
    The following statement is the main claim that we use to prove Theorem~\ref{thm:uniqueness_up_to_additive_constant}.
    \begin{lemma}\label{lem:no_coexistence_for_mu_and_mu'}
      For each integer $k$, $\P(E^+_k\cap E^-_k) = 0$.
    \end{lemma}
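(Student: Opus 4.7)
The plan is to apply Theorem \ref{thm: no coexistence} to the random configuration $\omega \in \{0,1\}^{\Z^2}$ defined by $\omega_v := 1_{\{f(v) \geq g(v) + 2k\}}$. Since $E^+_k$ (resp.\ $E^-_k$) is exactly the event that $\omega$ has an infinite connected component of $1$s (resp.\ $0$s), the conclusion of Theorem \ref{thm: no coexistence} would immediately give $\P(E^+_k \cap E^-_k) = 0$. I therefore need to verify the four hypotheses of Theorem \ref{thm: no coexistence} for the law of $\omega$.

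For \emph{ergodicity}, the joint law of $(f,g)$ is, as already noted in the excerpt, an extremal $\Ze^2$-translation-invariant Gibbs measure for homomorphism pairs, hence $\Ze^2$-ergodic, and this passes to the factor $\omega$. For \emph{positive association}, Lemma \ref{lem:FKG_for_pointwise_partial_order} together with extremality of $\mu,\mu'$ (from Theorem \ref{thm:ergodic_Gibbs_measures_are_extremal}) gives positive association for each of $f$ and $g$ separately; a standard double application of FKG, combined with the independence of $f$ and $g$, then yields positive association for the pair $(f,-g)$, and since every $\omega_v$ is an increasing function of $(f,-g)$, the field $\omega$ inherits positive association. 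For \emph{symmetry}, Corollary \ref{cor: invariance under permutation of coordinates and reflections} supplies invariance of $\mu$ and $\mu'$ under coordinate permutations and reflections in coordinate hyperplanes, and these operations commute with the definition of $\omega$.

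The main obstacle is verifying the \emph{unique infinite cluster} hypothesis, because Theorem \ref{thm:at_most_one_infinite_cluster} is phrased for sets of the form $\{F > G\}$ (strict inequality), whereas our cluster of $1$s is defined by the non-strict inequality $\{f \geq g + 2k\}$. I would resolve this through the parity condition: since $f(v)$ and $g(v)$ share the parity of $\|v\|_1$, the difference $f - g$ takes only even values, so $\{f \geq g + 2k\} = \{f > g + 2k - 2\}$. Applying Theorem \ref{thm:at_most_one_infinite_cluster} to the $\Ze^2$-translation-invariant Gibbs pair $(f,\, g + 2k - 2)$ (shifting $g$ by the even constant $2k - 2$ preserves both the parity constraint and the Gibbs property of its marginal) then yields uniqueness of infinite components of $\{f \geq g + 2k\}$; applying it instead to the pair $(g + 2k,\, f)$ yields uniqueness for $\{g + 2k > f\} = \{f < g + 2k\}$. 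With all four hypotheses verified, Theorem \ref{thm: no coexistence} concludes the proof.
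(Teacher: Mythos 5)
Your proposal is correct and takes essentially the same approach as the paper: apply Theorem \ref{thm: no coexistence} to $\omega_v = 1_{\{f(v)\ge g(v)+2k\}}$, checking ergodicity via extremality of the independent product, positive association via FKG for $(f,-g)$, symmetry via Corollary \ref{cor: invariance under permutation of coordinates and reflections}, and uniqueness of infinite clusters via Theorem \ref{thm:at_most_one_infinite_cluster}. Your explicit handling of the strict-vs-non-strict inequality through the even parity of $f-g$, applying Theorem \ref{thm:at_most_one_infinite_cluster} to the shifted pairs $(f,\,g+2k-2)$ and $(g+2k,\,f)$, is a point the paper leaves implicit, and makes the argument slightly cleaner than the paper's contradiction framing.
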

    \begin{proof}
Suppose for contradiction that $\P(E^+_k\cap E^-_k)=1$. By Theorem \ref{thm:at_most_one_infinite_cluster}, $\{f\geq g+2k\}$ and $\{f< g+2k\}$ each have a unique infinite connected component. In addition, we have by Corollary~\ref{cor: invariance under permutation of coordinates and reflections} and the discussion above	that
\begin{enumerate}
\item
$1_{f\geq g+2k}$ has positive association.
\item
$1_{f\geq g+2k}$ is invariant  under interchange of coordinates and reflection in coordinate hyperplanes.
\end{enumerate}
This contradicts Theorem \ref{thm: no coexistence}.
    \end{proof}
    We continue to study the events $E^+_k$ and $E^-_k$. It is clear that
    \begin{equation}\label{eq:E_+_E_-_monotonocity}
      \P(E^+_{k+1})\le \P(E^+_k),\quad \P(E^-_{k+1})\ge \P(E^-_k).
    \end{equation}
    Combining the relations~\eqref{eq:E_+_E_-_zero_one} and~\eqref{eq:E_+_E_-_monotonocity} with Lemma~\ref{lem:no_coexistence_for_mu_and_mu'}, and exchanging the roles of $f$ and $g$ if necessary (which switches $E_{k}^-$ with $E_k^+$), we see that one of the following cases must occur:
    \begin{enumerate}
      \item There exists an integer $k_0$ such that $\P(E^+_{k_0+1})=0$ and $\P(E^-_{k_0}) = 0$.
      \item For all integer $k$, $\P(E^-_k) = 0$.
    \end{enumerate}
    Note, however, that if $\P(E^-_k) = 0$ for some $k$ then the distribution of $f$ stochastically dominates the distribution of $g+2k$ by Corollary~\ref{cor:stochastic domination}. Thus the second case implies that $f$ dominates $g+2k$ for all integer $k$. This cannot occur. Suppose then that the first case occurs for some integer $k_0$. Applying Corollary~\ref{cor:stochastic domination} again shows that the distribution of $f$ equals the distribution of $g+2k_0$, completing the proof of Theorem~\ref{thm:uniqueness_up_to_additive_constant}.

\section{Log-concavity and some consequences}\label{section: Log-concavity}
We further derive the log-concavity of the distribution of the height at a vertex.
    \begin{prop}\label{prop:log_concavity}
      Let $\Lambda\subset\Z^d$ be finite and $\tau:\extB\Lambda\to\Z$ be a homomorphism height function. Let $f$ be sampled from $\mu^{\Lambda,\tau}$. Then for any $v\in\Lambda$, the distribution of $f(v)$ is \emph{log-concave} in the sense that
      \begin{equation}\label{eq:log_concavity}
        \P(f(v) = m)^2\ge \P(f(v) = m+2k)\cdot\P(f(v)=m-2k)
      \end{equation}
      for integer $m$ of the same parity as $\|v\|_1$ and integer $k$.
      Consequently, if $f$ is sampled from an extremal Gibbs measure then \eqref{eq:log_concavity} remains true at any $v\in\Z^d$ and, in particular, $f(v)$ has finite moments of all orders.
    \end{prop}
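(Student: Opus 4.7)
I would prove log-concavity by reducing to a nearest-neighbor inequality, establishing that via a bijection for the finite-volume measure, and then extending to extremal Gibbs measures and deducing the moment statement.

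\textbf{Step 1 (Reduction to $k=1$).} Writing $p_a := \P(f(v)=a)$, the case $k=1$ asserts that the ratios $p_{a+2}/p_a$ are non-increasing in $a$ on the support. The general-$k$ inequality follows by an elementary pairing: setting $q_j := p_{m+2j}$ and $r_j := q_{j+1}/q_j$, the desired $q_0^2 \ge q_{-k} q_k$ reduces to $r_0 r_1 \cdots r_{k-1} \le r_{-1} r_{-2} \cdots r_{-k}$, which follows termwise from $r_j \le r_{-j-1}$ for $j \ge 0$. So it suffices to handle the $k=1$ case.

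\textbf{Step 2 (Finite volume, $k=1$).} Since $\mu^{\Lambda,\tau}$ is uniform on a finite set, setting $\Omega_a$ to be the homomorphism height functions extending $\tau$ with $f(v)=a$, the inequality becomes the combinatorial statement $|\Omega_m|^2 \ge |\Omega_{m+2}| \cdot |\Omega_{m-2}|$. I would establish this by constructing an injection $\Omega_{m+2} \times \Omega_{m-2} \hookrightarrow \Omega_m \times \Omega_m$ using a cluster-swap/reflection in the spirit of Lemma~\ref{lem:finite_cluster_swap}. Given $(f_1,f_2) \in \Omega_{m+2} \times \Omega_{m-2}$, the difference $h := f_1-f_2$ takes values in $2\Z$, vanishes on $\extB\Lambda$, equals $4$ at $v$, and satisfies $|h(u)-h(w)| \le 2$ for $u \sim w$. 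I would identify a finite connected set $C \ni v$ built from the level sets of $h$ and a local modification of $(f_1,f_2)$ on $C$ that simultaneously lowers $f_1(v)$ by $2$ and raises $f_2(v)$ by $2$, producing $(g_1,g_2) \in \Omega_m \times \Omega_m$ while extending $\tau$ and preserving the gradient-one constraint across $\partial C$. The modification must be invertible (recovering $(f_1,f_2)$ from $(g_1,g_2)$ via an analogous cluster on the output side), which gives the injection.

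\textbf{Step 3 (Extremal Gibbs measures and moments).} If $\mu$ is an extremal Gibbs measure then its tail $\sigma$-algebra is trivial. By the Gibbs property, for each finite $\Lambda \ni v$ the conditional distribution of $f(v)$ given $f|_{\Lambda^c}$ coincides with $\mu^{\Lambda, f|_{\extB\Lambda}}(f(v) = \cdot)$, which is log-concave by Step~2. As $\Lambda(L)\uparrow \Z^d$, the $\sigma$-algebras $\sigma(f|_{\Lambda(L)^c})$ decrease to the trivial tail, and the reverse-martingale convergence theorem gives almost-sure convergence of each conditional probability to the unconditional probability $\P(f(v) = a)$. Log-concavity is preserved under pointwise limits. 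Finally, a log-concave probability distribution on the sublattice $\|v\|_1 + 2\Z$ has non-increasing ratios that must eventually drop strictly below $1$ (otherwise the total mass would be infinite), yielding geometric tail decay and hence finite moments of all orders.

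\textbf{Main obstacle.} The hardest part is the precise construction of the modification in Step~2: identifying the correct cluster $C$ and the local operation that simultaneously adjusts $f_1$ and $f_2$ at $v$ while respecting the homomorphism constraint at $\partial C$ for an \emph{arbitrary} boundary datum $\tau$. A naive reflection sending $f_1 \mapsto 2(m+1)-f_1$ and $f_2 \mapsto 2(m-1)-f_2$ on the connected component of $v$ in $\{h \ge 4\}$ satisfies the gradient-one constraint only when the inner boundary is compatibly structured, so the full argument presumably iterates the swap across the successive super-level sets of $h$ or uses a more delicate cluster choice, in the vein of the cluster-swap framework of~\cite[Lemma~8.2.4]{MR2251117}.
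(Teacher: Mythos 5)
Your Step~1 reduction to $k=1$ is valid (and your Step~3 argument via reverse martingale convergence and geometric tails is fine), but the crux of the proof --- the injection in Step~2 --- is left unresolved, and the candidate construction you sketch is the wrong one. The ``naive reflection'' $f_1\mapsto 2(m+1)-f_1$, $f_2\mapsto 2(m-1)-f_2$ on the connected component of $v$ in a super-level set of $h=f_1-f_2$ cannot be made to respect the gradient constraint across the cluster boundary: for a reflection about a constant $c$ to patch together with the unchanged values outside, you would need $f_1=c$ identically along the outer boundary of the cluster, and no choice of cluster built from level sets of $h$ enforces this for arbitrary $\tau$. Iterating reflections over successive level sets does not repair the problem, because the obstruction is that you are modifying $f_1$ and $f_2$ \emph{individually} rather than \emph{jointly} relative to their difference.

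The construction that actually works (and the one the paper uses, directly for general $k$ without any reduction to $k=1$) is a \emph{shifted swap} of the pair, not a reflection. Take $\Lambda'$ to be the connected component of $v$ in $\{h^+ > h^- + 2k\}$ (where $h^+\in H_{m+2k}$, $h^-\in H_{m-2k}$). Since $h^+=h^-$ on $\extB\Lambda$, $\Lambda'$ is disjoint from $\extB\Lambda$, and at every edge crossing $\partial\Lambda'$ one has exactly $h^+=h^-+2k$ on the exterior endpoint (because the difference $h^+-h^-$ changes by $0$ or $\pm 2$ between neighbors). Now set $h:=h^+-2k$, $h':=h^-+2k$ on $\Lambda'$ and $h:=h^-$, $h':=h^+$ off $\Lambda'$; the boundary identity $h^+=h^-+2k$ makes the two pieces of $h$ (and of $h'$) glue into genuine homomorphism height functions with $h(v)=h'(v)=m$. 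Invertibility follows because $\Lambda'$ is recoverable from $(h,h')$ as the connected component of $v$ in $\{h>h'-2k\}$. This joint modification is the missing idea; without it the injection does not exist, so Step~2 as written has a genuine gap even though the overall strategy (injection via a cluster swap) is correct.
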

    Log-concavity is known more generally for nearest-neighbor convex potentials from \cite[Lemma 8.2.4]{MR2251117} and for more general graphs (but prescribing boundary values at a single vertex rather than at a set) by Kahn \cite[Proposition 2.1]{MR1856513}. While the proofs are similar, we provide a proof for completeness.
\begin{proof}
Fix an integer $m$ with the same parity as $\|v\|_1$ and a positive integer $k$. For an integer $j$, let $H_j$ be the set of homomorphism height functions on $\Lambda^+$ which coincide with $\tau$ on $\extB\Lambda$ and equal $j$ at $v$. To prove~\eqref{eq:log_concavity} it suffices to build an injection from $H_{m+2k}\times H_{m-2k}$ to $H_m\times H_m$.

Let $h^+\in H_{m+2k}$ and $h^-\in H_{m-2k}$. Let $\Lambda'\subset\Z^d$ be the largest connected set containing $v$ on which $h^+>h^-+2k$. As $h^+ = h^-$ on $\extB\Lambda$ we must have that $\Lambda'\cap \extB\Lambda = \emptyset$. We may thus define homomorphism height functions $h, h'\in H_m$ by
\begin{align*}
  &h_{\Lambda'}:=(h^+-2k)_{\Lambda'},\quad h_{\Lambda\setminus{\Lambda'}}:=h^-_{\Lambda\setminus{\Lambda'}},\\
  &h'_{\Lambda'}:=(h^-+2k)_{\Lambda'},\quad h'_{\Lambda\setminus{\Lambda'}}:=h^+_{\Lambda\setminus{\Lambda'}}.
\end{align*}
Furthermore, $\Lambda'$ can be recovered from the pair $(h, h')$ as the largest connected set containing $v$ on which $h>h'-2k$. Thus the map $(h^+, h^-)\mapsto (h, h')$ is injective.
%
\end{proof}

Here is an immediate corollary of Theorem \ref{thm:ergodic_Gibbs_measures_are_extremal} and Proposition \ref{prop:log_concavity} proving that $\Ze^d$-ergodic Gibbs measures can be parametrized by the mean of the height at the origin.
\begin{cor}\label{corollary:mean}
Let $f$ and $g$ be sampled from $\Ze^d$-ergodic Gibbs measures. The mean of $f(\zero)$ is strictly greater than the mean of $g(\zero)$ if and only if $f$ strictly stochastically dominates $g$.
\end{cor}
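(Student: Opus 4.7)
The plan is to combine the stochastic-ordering result for $\Ze^d$-ergodic Gibbs measures (Corollary~\ref{corollary:stochastic_domination}) with a $\Ze^d$-translation-invariant monotone coupling argument. First, $\E[f(\zero)]$ and $\E[g(\zero)]$ are finite: by Theorem~\ref{thm:ergodic_Gibbs_measures_are_extremal} the underlying measures $\mu,\mu'$ are extremal, and the last part of Proposition~\ref{prop:log_concavity} then yields finite moments of all orders at each vertex. For the forward direction, if $\E[f(\zero)] > \E[g(\zero)]$ then by Corollary~\ref{corollary:stochastic_domination} one of the two measures stochastically dominates the other, and the reverse alternative would force $\E[g(\zero)] \geq \E[f(\zero)]$; thus $f$ stochastically dominates $g$, strictly since the distributions differ.

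For the converse, suppose $f$ strictly stochastically dominates $g$, so immediately $\E[f(\zero)] \geq \E[g(\zero)]$, and I aim to rule out equality. Assume for contradiction that equality holds. I would then produce a $\Ze^d$-translation-invariant coupling $(\bar f, \bar g)$ of $\mu$ and $\mu'$ with $\bar f \geq \bar g$ pointwise almost surely, by applying the finite-cluster equalization of Corollary~\ref{cor:finite_cluster_equalizing} with $\eps \equiv 0$ to the independent product $\mu \otimes \mu'$. The strict stochastic domination $\mu \geq \mu'$ implies, as in the proof of Corollary~\ref{corollary:stochastic_domination} (using extremality and Lemma~\ref{lem:no_infinite_clusters_for_f>g_and_f<g}), that the independent product almost surely has no infinite component of $\{f < g\}$, and then the case analysis in the proof of Corollary~\ref{cor:stochastic domination} gives $\bar f \geq \bar g$ pointwise. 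Since both $\mu \otimes \mu'$ and the cluster-equalization construction are $\Ze^d$-equivariant, the coupling inherits $\Ze^d$-translation invariance.

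The non-negative random variable $\bar f(\zero) - \bar g(\zero)$ then has zero expectation and is therefore zero almost surely; by $\Ze^d$-translation invariance and a countable union bound, $\bar f(v) = \bar g(v)$ almost surely simultaneously for every $v \in \Ze^d$. To upgrade this equality on the even sublattice to equality of the full measures, I invoke the Gibbs property at single odd vertices: for $v_0 \in \Zo^d$ and $\Lambda = \{v_0\}$, the conditional distribution of the value at $v_0$ given the rest is uniform on the set of integers $m$ satisfying $|m - f(w)| = 1$ for every neighbor $w \in \extB\Lambda \subset \Ze^d$, a distribution depending only on $f|_{\Ze^d}$ and common to every Gibbs measure. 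Hence the almost-sure identity $\bar f|_{\Ze^d} = \bar g|_{\Ze^d}$ forces $\bar f \eqd \bar g$, giving $\mu = \mu'$ and contradicting strict stochastic domination. The main delicate step is ensuring that the monotone coupling is genuinely $\Ze^d$-translation-invariant, so that single-site equality at $\zero$ can be promoted to equality throughout $\Ze^d$; this is precisely why the cluster-equalization construction applied to the translation-invariant independent product is essential, rather than an arbitrary monotone coupling of the marginals.
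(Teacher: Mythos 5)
Your argument is correct, and the forward direction matches the paper (both reduce to Corollary~\ref{corollary:stochastic_domination}). For the converse you take a genuinely different route. The paper argues directly: after the $\eps\equiv 0$ equalization it observes that, since $f$ and $g$ have distinct laws, the set $\{\overline f\neq\overline g\}=\{\overline f>\overline g\}$ has an infinite connected component with positive probability, hence meets $\Ze^d$; by a union bound some $v\in\Ze^d$ has $\P(\overline f(v)>\overline g(v))>0$, so $\overline f(v)$ strictly stochastically dominates $\overline g(v)$, and translation invariance plus log-concavity (for integrability) then yield $\E f(\zero)>\E g(\zero)$ outright. You instead argue by contradiction: assuming equal means and observing that the equalized coupling is $\Ze^d$-equivariant, you deduce $\overline f=\overline g$ almost surely on $\Ze^d$, and then upgrade this to $\mu=\mu'$ using the Markov property at single odd vertices (the conditional law of $f(v_0)$ given the rest depends only on its even neighbors, so a Gibbs measure is determined by its restriction to $\Ze^d$). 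Both routes use cluster equalization, extremality/ergodicity, translation invariance, and log-concavity; the paper's is slightly more economical because it avoids your closing "Gibbs measures are determined by the even sublattice" step, whereas yours trades that for the perhaps more transparent "non-negative with zero mean is zero a.s." observation. One could also shorten your endgame: once $\overline f\ge\overline g$ pointwise and $\overline f|_{\Ze^d}=\overline g|_{\Ze^d}$ a.s., your argument shows $\overline f\eqd\overline g$, and a pointwise-dominating coupling of identically distributed random fields is a.s.\ equal, which contradicts strict domination immediately without having to phrase things in terms of $\mu=\mu'$.
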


\begin{proof} Let $f, g$ be sampled independently from their respective distributions. By Corollary~\ref{corollary:stochastic_domination}, it is sufficient to prove that if $f$ strictly stochastically dominates $g$ then the mean of $f(\zero)$ is strictly greater than the mean of $g(\zero)$. Consider $(\overline f, \overline g)$ as in Corollary \ref{cor:finite_cluster_equalizing} with $\eps\equiv 0$. Then $\overline f$ has the same distribution as $f$, $\overline g$ has the same distribution as $g$ and $\overline f\geq \overline g$ pointwise.  By Corollary~\ref{cor:stochastic domination} we have that  $\{\overline f>\overline g\}$ has an infinite connected component. In particular it follows that there exists $v\in \Ze^d$ such that $\overline{f}(v)$ strictly stochastically dominates $\overline{g}(v)$.
Since the measures are $\Ze^d$-translation invariant it follows that $f(\zero)$ strictly stochastically dominates $g(\zero)$. By the log concavity of the marginals (Proposition~\ref{prop:log_concavity}) it follows that the mean of $f(\zero)$ exists and is greater than the mean of $g(\zero)$.
\end{proof}

Lastly, we deduce that various notions of delocalization are equivalent in our case.

\begin{prop}
For odd integers $L\geq 1$, let $f_L$ be sampled from $\mu^{\Lambda(L),0}$. The following are equivalent:
\begin{enumerate}
\item
$\inf_{L}\P(|f_L(\zero)|=0)=0$.
\item
$\lim_{M\to \infty}\sup_L\P(|f_L(\zero)|>M)>0$.
\item
$\sup_L\P(|f_L(\zero)|>M)=1\quad$for all $M>0$.
\item
$  \sup_{L} \E(\exp(\alpha\cdot |f_L(\zero)|))=\infty\quad \text{for all $\alpha>0$}$.
\end{enumerate}
\end{prop}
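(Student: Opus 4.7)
The plan hinges on Proposition~\ref{prop:log_concavity} (log-concavity of the marginal $f_L(\zero)$) combined with the distributional symmetry $f_L\eqd -f_L$, which is immediate from the invariance of the uniform measure under negation since the boundary condition is $0$. Writing $p_k := \P(f_L(\zero)=2k)$ (only even values arise because $\zero\in\Ze^d$), these two ingredients make the sequence $(p_k)_{k\in\Z}$ log-concave and symmetric. Standard facts about such sequences then give: $(p_k)$ is unimodal with mode at $0$, so in particular $p_k\le p_0$; and setting $r:=p_1/p_0\in[0,1]$, a short induction from log-concavity ($p_{k+1}/p_k\le p_k/p_{k-1}\le \ldots \le r$) yields $p_k\le p_0 r^k$ for all $k\ge 0$, whence summing and using $\sum_k p_k=1$ produces
\[ 1 \le p_0\cdot \frac{1+r}{1-r}, \qquad \text{equivalently} \quad r \le \frac{1-p_0}{1+p_0}. \]
The whole argument will be driven by this geometric bound: as soon as $p_0$ is bounded below by some $c>0$ uniformly in $L$, the tails of $f_L(\zero)$ decay geometrically at a uniform rate $r_0:=(1-c)/(1+c)<1$, and conversely whenever $p_0$ is small the distribution is forced to spread out.

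I would organize the equivalences as a cycle (1)$\Rightarrow$(3)$\Rightarrow$(2)$\Rightarrow$(1), supplemented by (3)$\Leftrightarrow$(4). For (1)$\Rightarrow$(3): pick a subsequence $L_n$ with $p_0^{(L_n)}\to 0$; using only the unimodality bound $p_k\le p_0$, we get $\P(|f_{L_n}(\zero)|\le 2M)\le (2M+1)\, p_0^{(L_n)}\to 0$ for every fixed $M$, so $\sup_L \P(|f_L(\zero)|>M)=1$. The implication (3)$\Rightarrow$(2) is immediate. For (2)$\Rightarrow$(1) I argue by contrapositive: if $p_0^{(L)}\ge c>0$ uniformly, the uniform geometric tail bound gives $\P(|f_L(\zero)|>2M)\le 2 r_0^{M+1}/(1-r_0)$, which tends to $0$ uniformly in $L$, contradicting (2). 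For (3)$\Rightarrow$(4): since $\E(\exp(\alpha|f_L(\zero)|))\ge e^{\alpha M}\P(|f_L(\zero)|>M)$, taking $\sup_L$ and using (3) yields $\sup_L \E(\exp(\alpha|f_L(\zero)|))\ge e^{\alpha M}$ for every $M$, hence $\infty$. Finally, (4)$\Rightarrow$(1) (again contrapositive): if $p_0^{(L)}\ge c>0$ uniformly, the bound $p_k\le r_0^{|k|}$ gives
\[ \E(\exp(\alpha|f_L(\zero)|)) \le 1 + 2\sum_{k\ge 1} r_0^k e^{2\alpha k}, \]
which is finite and uniform in $L$ for any $\alpha<-\tfrac12\log r_0$, contradicting (4).

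No serious obstacle is anticipated: once the uniform inequality $p_k\le p_0\,((1-p_0)/(1+p_0))^k$ has been extracted from log-concavity and symmetry, every implication in the cycle is a one-line computation. The only slightly delicate step is the derivation of this geometric decay rate; it rests on the short log-concave induction for $p_{k+1}/p_k$ together with the normalization $\sum_k p_k=1$, and is the single point where both log-concavity and symmetry are simultaneously used.
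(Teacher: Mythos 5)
Your overall strategy (exploit symmetry $f_L\eqd -f_L$ and log-concavity of $f_L(\zero)$, from Proposition~\ref{prop:log_concavity}, then run a cycle of implications) is the same one the paper uses, and your arguments for (1)$\Rightarrow$(3), (3)$\Rightarrow$(2), and (3)$\Rightarrow$(4) are correct. However, there is a fatal algebra error in the central geometric bound driving (2)$\Rightarrow$(1) and (4)$\Rightarrow$(1). From $p_k\le p_0 r^k$ and $\sum_k p_k=1$ you correctly obtain $1\le p_0\cdot\frac{1+r}{1-r}$, but this rearranges to $p_0\ge\frac{1-r}{1+r}$, i.e.
\begin{equation*}
r\;\ge\;\frac{1-p_0}{1+p_0},
\end{equation*}
which is a \emph{lower} bound on $r$, not the upper bound you claim. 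A lower bound on the ratio $r=p_1/p_0$ is useless for controlling the tail. In fact $r$ cannot be bounded above by any function of $p_0$ alone: consider the symmetric log-concave sequence $p_0=p_1=p_{-1}=\tfrac13$ and $p_k=0$ for $|k|\ge 2$, for which $p_0=\tfrac13$ is bounded away from $0$ but $r=1$. Thus the claimed uniform geometric decay $p_k\le r_0^k$ with $r_0=(1-c)/(1+c)<1$ does not follow, and both contrapositive arguments collapse.

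The way the paper avoids this is precisely the point you sidestepped: it does not try to control the ratio at \emph{unit} scale, but instead looks at the smallest scale $m_L$ at which the probability halves, $2m_L=\min\{n\in 2\N\colon \P(f_L(\zero)=n)<\theta_L/2\}$ where $\theta_L=p_0$. Unimodality forces $m_L\theta_L/2\le 1$, so $m_L<2/\theta_L$ is \emph{bounded} whenever $\theta_L\ge c$. Log-concavity applied at lag $m_L$ then gives $\P(f_L(\zero)=2km_L)<\theta_L/2^k$, which is uniform geometric decay, but at a scale $m_L$ that depends on $\theta_L$ rather than being fixed at $1$. With that fix, both (2)$\Rightarrow$(1) and (4)$\Rightarrow$(1) go through exactly as you intended.
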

\begin{proof}
Since the random variable $f_L(\zero)$ is symmetric and log-concave, $\P(f_L(\zero)=2n)$ is non-increasing as a function of integer $n\ge 0$. Thus~(1) implies~(3). It is obvious that~(3) implies~(2) and~(4). Let us see why each of~(2) and~(4) imply~(1).
Let  $\theta_L=\P(f_L(\zero)=0)$. If
$$2m_L=\min\left\{n\in \{2, 4, \ldots\}~:~\P(f_L(\zero)=n)<\frac{\theta_L}{2}\right\}$$
then $m_L<\frac{2}{\theta_L}$. Thus $\P(f_L(\zero)=2km_L)<\frac{\theta_L}{2^k}$ for all positive integers $k$ by the definitions of $\theta_L, m_L$ and the log-concavity of the distribution of $f_L(\zero)$. From here it is easy to see that both~(2) and~(4) individually imply that $\sup_L m_L=\infty$ whence $\inf_L \theta_L=0$. This completes the proof.
\end{proof}

\section{Discussion and Open questions}\label{section:open_questions}
    \begin{enumerate}
      \item In Theorem~\ref{thm:main} we prove that two-dimensional homomorphism height functions do not admit $\Ze^2$-translation-invariant Gibbs measures. It is however easy to construct Gibbs measures which are not translation-invariant, e.g., the ``frozen'' measure that assigns unit mass to $f \bigl( (x_1,x_2) \bigr)= x_1+x_2$. A probability measure on homomorphism height functions is called \emph{almost frozen} if its sample $f$ satisfies $$\P\left(\lim_{n\to\infty}\frac{f(u+ne)-f(u)}{n}=1\right)=1$$ for some fixed unit vector $e$ and all $u\in \Z^2$. There are uncountably many extremal Gibbs measures which are almost frozen. The following question naturally arises: Are all extremal Gibbs measures on two-dimensional homomorphism height functions almost frozen?

 \item Let $d\ge 3$. Let $f$ be sampled from a $\Ze^d$-ergodic Gibbs measure $\mu$ and fix a unit vector $e$. For all positive integers $n$ another $\Ze^d$-ergodic Gibbs measure $\nu$ can be constructed from it such that its sample $g$ satisfies $g(v)= f(v+ne)+n$ in distribution. Are these all the $\Ze^d$-ergodic Gibbs measures?

The answer to this question is positive for large $d$ by the results of~\cite{peled2018rigidity}. We sketch the main steps required to reach this conclusion: Any $3$-coloring of the $\Z^d$ lattice can be obtained by the modulo $3$ operation on homomorphism height functions. Further, for any two homomorphism height functions $f$ and $g$ which give the same $3$-coloring modulo $3$, there is an integer $k$ such that $f=g+6k$. It follows that any two $\Ze^d$-ergodic Gibbs measures which coincide modulo $3$ have a $\Ze^d$-invariant coupling such that its sample $(f, g)$ satisfies $f=g+6k$ where $k$ is now an integer-valued random variable. By Theorem~\ref{thm:ergodic_Gibbs_measures_are_extremal} and Proposition~\ref{prop:log_concavity}, we know that the mean of $f(v)$ and $g(v)$ exists for all $v\in \Z^d$. Thus the ergodic theorem implies that $k$ is a constant. In conclusion, if two $\Ze^d$-ergodic Gibbs measures on homomorphism height functions are equal modulo $3$ then they differ by addition of $6k$ for some integer $k$.

 We proved in Lemma \ref{lemma: Gibbs has maximal entropy} that $\Ze^d$-ergodic Gibbs measures on homomorphism height functions, when taken modulo $3$ are $\Ze^d$-ergodic measures of \emph{maximal entropy} for the $3$-coloring model. The $\Ze^d$-ergodic measures of maximal entropy for $3$-colorings have been characterized in~\cite{peled2018rigidity} for very high dimensions, with the following consequences:
 \begin{enumerate}
\item
 There are exactly $6$ such measures.
\item
Given a sample $f$ from a $\Ze^d$-ergodic Gibbs measure on homomorphism height function, the distribution of $v\mapsto f(v+ne)+n \pmod 3$ cycles through the $6$ measures as $n$ varies.
\end{enumerate}
The above facts imply the affirmative answer to the question in sufficiently high dimensions.
\item
Given graphs $G$ and $H$, we write $G \times H$ to denote the tensor product of $G$ with $H$ (in the tensor product $(g, h)$ is adjacent to $(g', h')$ if $g$ is adjacent to $g'$ and $h$ is adjacent to $h'$). It is easy to see that $\Z^2$ is isomorphic to each of  the two connected components of $\Z\times \Z$ (by the mapping $(i,j)\mapsto(i+j,i-j)$). It follows that our results for homomorphism height functions also hold for graph homomorphisms from $\Z^d$ to $\Z^2$. The following question naturally arises: Let $f_{L,m}$ be a uniformly sampled graph homomorphism from $\Lambda(L)^+\subset \Z^2$ to $\Z^m$, normalized by having $f_{L,m}$ map $\extB\Lambda(L)$ to the origin of $\Z^m$. For which $m$ does
      \begin{equation}
        \sup_{L} \var(|f_{L,m}(\zero)|)=\infty?
      \end{equation}

\item As described in the introduction, the model of uniformly sampled homomorphism height functions on $\Z^2$ is the $c=1$ case of the \emph{F-model}, which is a one-parameter sub-family of the $6$-vertex model indexed by the parameter $c>0$. In the F-model one weighs a homomorphism height function $f$ by $c^{-N(f)}$ where $N(f)$ is the number of diagonally-adjacent vertex pairs on which $f$ is unequal. It is conjectured that the F-model is delocalized if and only if $c\le 2$ (as a subset of the disordered regime of the $6$-vertex model). This conjecture has been verified when $c=1$, in this work, when $c\ge 2$ by a coupling with the random-cluster model~\cite{duminil2016discontinuity, glazmanpeled2018} and, very recently, for $c\in[\sqrt{2+\sqrt{2}},2]$ in~\cite{lis2020delocalization}.

\item The Gibbs measure uniqueness results in \cite{MR2251117} apply to convex, nearest-neighbor difference potentials on planar lattices. Using the standard parameters $a$,$b$ and $c$ (see, e.g.,~\cite{glazmanpeled2018}), the $6$-vertex model can be described using convex difference potentials when $\max(a,b) \leq c$ (this also corresponds to the parameters which make the model stochastically monotone as presented in \cite{LT2019}). Moreover, the graph representing the interactions between vertices is planar when either $a=c$ or $b=c$ which is a key requirement for using Theorem \ref{thm: no coexistence}. As a consequence, we believe that, using the cluster swapping technique mentioned in Section \ref{subsection: infinite clusters} and the argument presented in this paper, Theorem~\ref{thm:main} can be generalized to show that the $6$-vertex model delocalizes when the parameters satisfy either of the following two conditions: $a=c$ and $b \le c$ or $b=c$ and $a \leq c$. The uniformity of our model is not essential but the convexity of the potential, the planarity of the model and the fact that the parity of the height function changes on adjacent vertices is needed. These aspects are also used, together with additional arguments, in the proof of delocalisation in \cite{lammers_height_2020} (for certain integer-valued height functions on graphs with degree $3$ or less).
    \end{enumerate}

	\bibliographystyle{abbrv}
	\bibliography{3CB}
\end{document}